\font\tenmsb=msbm10    \textfont\msbfam=\tenmsb
\font\sevenmsb=msbm7 \scriptfont\msbfam=\sevenmsb
\font\fivemsb=msbm5 \scriptscriptfont\msbfam=\fivemsb
\font\tenbig=msbm10 scaled \magstep2   \textfont\bigfam=\tenbig
\font\sevenbig=msbm7 scaled \magstep2 \scriptfont\bigfam=\sevenbig
\font\fivebig=msbm5 scaled \magstep2
\newtheorem{theorem}{Theorem}[section]
\newtheorem{lemma}[theorem]{Lemma}
\newtheorem{corollary}[theorem]{Corollary}
\newdefinition{remark}{Remark}[section]
\newdefinition{example}{Example}[section]
\newenvironment{proof}[1][Proof]{\noindent\textbf{#1. }}{\hfill $\Box$}
\definecolor{mygray}{gray}{.85}
\numberwithin{equation}{section}
\journal{}
\begin{document}
\begin{frontmatter}

\title{Decoupled Modified Characteristic Finite Element Method with Different Subdomain Time Steps for Nonstationary Dual-Porosity-Navier-Stokes Model
\tnoteref{mytitlenote}}

\author[add1]{Luling Cao}\ead{lulingcao@163.com}
\author[add1]{Yinnian He\corref{correspondingauthor}}\ead{heyn@mail.xjtu.edu.cn}
\author[add2]{Jian Li\corref{correspondingauthor}}\ead{jiaaanli@gmail.com}
\cortext[correspondingauthor]{Corresponding author.}

\address[add1]{School of Mathematics and Statistics, Xi'an Jiaotong University, Xi'an 710049, China}
\address[add2]{Department of Mathematics, Shaanxi University of Science and Technology, Xi'an 710021, China}
~\\
~\\
~\\
~\\

\begin{abstract}
\par In this paper, we develop the numerical theory of decoupled modified characteristic finite element method with different subdomain time steps for the mixed stabilized formulation of nonstationary dual-porosity-Navier-Stokes model. Based on partitioned time-stepping methods, the system is decoupled, which means that the Navier-Stokes equations and two different Darcy equations are solved independently at each time step of subdomain. In particular, the Navier-Stokes equations are solved by the modified characteristic finite element method, which overcome the computational difficulties caused by the nonlinear term. In order to increase the efficiency, different time steps are used to different subdomains. The stability of this method is proved. In addition, we verify the optimal~$L^2$-norm error convergence order of the solutions by mathematical induction, whose proof implies the uniform~$L^{\infty}$-boundedness of the fully discrete velocity solution. Finally, some numerical tests are presented to show efficiency of the proposed method.

\end{abstract}
\begin{keyword} nonstationary dual-porosity-Navier-Stokes model; decoupled method; different subdomain time steps; mixed finite element method; modified characteristic finite element method; stability; convergence analysis
\end{keyword}

\end{frontmatter}


\section{Introduction}
Coupled free flow and porous medium flow systems play an important role in many fields. For example, the flood simulation of arid areas in geological science~\cite{discacciati2004domain}, filtration treatment in industrial production~\cite{nassehi1998modelling, hanspal2006numerical}, petroleum exploitation in mining and blood penetration between vessels and organs in life science~\cite{d2011robust}.

Usually, the system can be described by a Stokes~(Navier-Stokes)~coupled Darcy equation. There are a great deal of achievements~\cite{Yong2013Decoupling, Mu2010Decoupled, Chen2012Efficient, Girault2009DG, Cao2011Robin, Cai2009NUMERICAL, zhao2016two-grid}. However, the Darcy equation is a single porosity model, which is not accurate to deal with the complicated multiple porous media similar to naturally fractured reservoir. Actually, the naturally fractured reservoir is comprised of low permeable rock matrix blocks surrounded by an irregular network of natural microfractures. And they have different fluid storage and conductivity properties~\cite{The1963, Naturally1980, New1983}. In 2016, Hou et al. proposed and numerically solved a coupled dual-porosity-Stokes multi-physics interface system~\cite{hou2016a}~where dual-porosity equations were used to describe the multiple porous media flow. At present, the research on this model can be found in the literature~\cite{shan2019partitioned, mahbub2019coupled, mahbub2020mixed, he2020an}. To our best knowledge, up till now, there has been no research on the dual-porosity-Navier-Stokes model.

For the nonstationary dual-porosity-Navier-Stokes model, it has some features in physical and some difficulties in numerical analysis. As we all know, the fluid velocity in the free flow domain is usually much higher than that in the porous medium. Therefore, it is reasonable to apply different time steps in different subdomains. For coupled free flow and porous media flow with different subdomain time steps, see~\cite{kirby2003on, shan2013a, rybak2014a, 2017A, li2018a}~and the references therein. And for the difficulties,
it is a coupling problem. It will be considered to take direct method or decoupled method. In order to reduce the scale of solving problems and increase the reusability of software packages, we choose the decoupled method. When using partitioned time-stepping method, as a simple decoupled strategy, we have to face a difficulty that the artificial energy transfers generated by the interface time-splitting. This results in numerical instabilities~\cite{Leopold2008}. To this point,  Nitsche’s interface method is used to control the artificial energy transfers~\cite{Nitsche1971, becker2003a, burman2007a, fernandez2014explicit, mahbub2019coupled}.
On the other hand, we need to consider how to solve the nonlinear term in Navier-Stokes equation. Since the modified characteristic finite element methods could link the nonlinear term to the time term, it can greatly improve efficiency~\cite{Si2014Decoupled, Si2016Unconditional, cao2020JCAM}. The method is preferred.

In this paper, we propose and develop the numerical theory of decoupled modified characteristic finite element method with different subdomain time steps for the mixed stabilized formulation of this model. In order to hold the numerical stability, a mesh dependent stabilization term is introduced. The partitioned time-stepping method decomposes the original problem into Navier-Stokes equations and two different Darcy equations. For Navier-Stokes equations, the modified characteristic finite element method is employed to deal with the time and nonlinear terms. And the other Darcy equations are used by mixed finite element method. The stability of this method is proved. In the error analysis framework, proposed in~\cite{Si2016Unconditional, cao2020JCAM}, we prove the optimal~$L^2$-norm error convergence order by mathematical induction, whose proof implies the uniform~$L^{\infty}$-boundedness of the fully discrete velocity solution. Finally, some numerical tests are presented to show the validity of our theoretical results, especially high efficiency of the proposed method.

The rest of this paper is organized as follows: In Section 2, we introduce the nonstationary dual-porosity-Navier-Stokes model, and construct the fully discrete mixed stabilized decoupled modified characteristic scheme with different subdomain time steps. Stability of the method is presented in Section 3. In Section 4, some preliminaries and convergence analysis are shown. Section 5 reports some numerical examples, and the conclusions are given in Section 6.
\\

\section{Modified characteristic finite element method for the model problem}
\subsection{The nonstationary dual-porosity-Navier-Stokes model}
We consider a coupled dual-porosity-Navier-Stokes system on a bounded domain~$\Omega=\Omega_c \cup \Omega_d \subset \mathbb{R}^D, D=2,3$,  where~$\Omega_c$~and~$\Omega_d$~denote disjoint nonoverlapping bounded open convex regions with common boundary~$\mathbb{I}=\overline{\Omega}_c \cap \overline{\Omega}_d$, i.e. $\Omega_c \cap \Omega_d =\varnothing$. See Figure~1.

\begin{figure}[htbp]
  \centering
  \includegraphics[width=10cm]{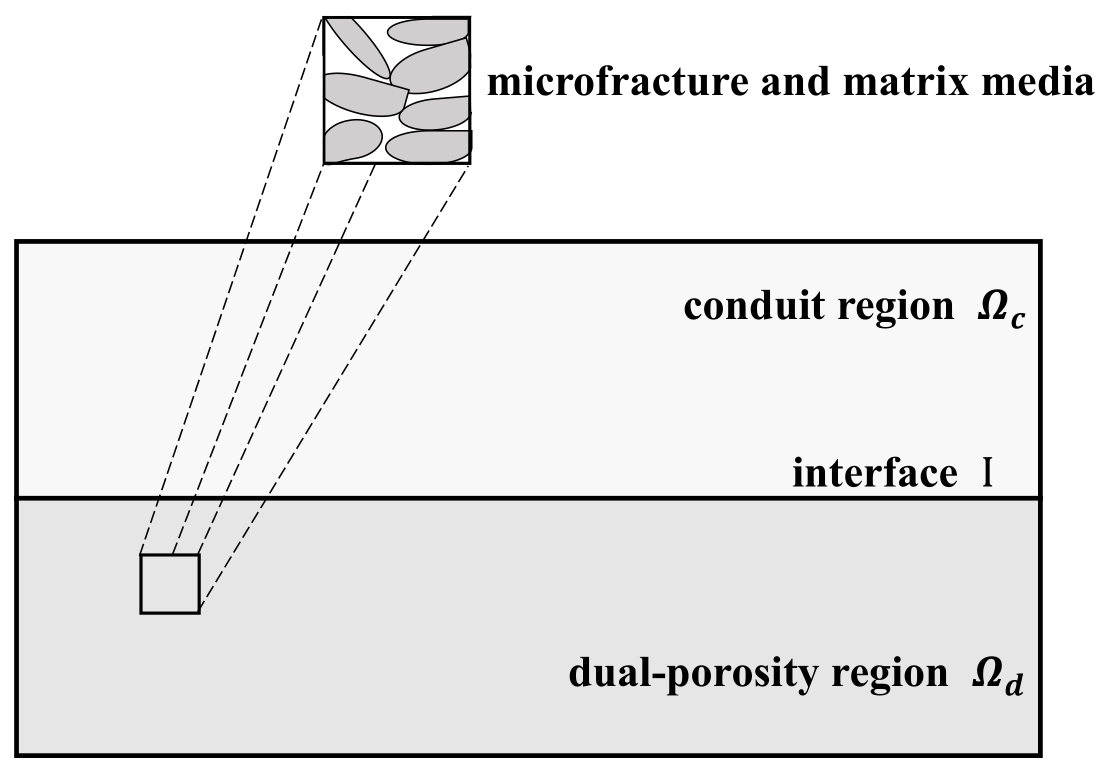}\\
  \caption{A sketch of the conduit region~$\Omega_c$, the dual-porosity region~$\Omega_d$~and the interface~$\mathbb{I}$.}
\end{figure}

In the conduit region~$\Omega_c$, let~$\boldsymbol{u}_c$~denote the fluid velocity, $p_c$~denote the kinematic pressure, $\boldsymbol{f}_c$~denote the external body force density, and $\nu >0$~is the kinematic viscosity of the fluid. The conduit flow in~$\Omega_c$~is assumed to satisfy, for~$t \in (0,T]$, the Navier-Stokes system
\begin{equation*}
\begin{split}
\frac{\partial \boldsymbol{u}_c}{\partial t}-\nu \Delta \boldsymbol{u}_c + \nabla p_c + (\boldsymbol{u}_c \cdot \nabla)\boldsymbol{u}_c &= \boldsymbol{f}_c,~~~~~~~~~~~~\text{in}~\Omega_c \times (0,T],\\
\nabla \cdot \boldsymbol{u}_c &= 0,~~~~~~~~~~~~~\text{in}~\Omega_c \times (0,T].
\end{split}
\end{equation*}

In the dual-porosity region~$\Omega_d$, let~$\boldsymbol{u}_f$~denote the velocity of microfracture flow,  $\phi_f$~denote the pressure. Accordingly, $\boldsymbol{u}_m$~denotes the velocity in matrix flow and~$\phi_m$~denotes the pressure. Then, the flow in the dual-porosity region is assumed to satisfy, for~$t \in (0,T]$, the dual-porosity system
\begin{equation*}
\begin{split}
\eta_f C_{ft} \frac{\partial \phi_f}{\partial t} + \nabla \cdot \boldsymbol{u}_f + \frac{\sigma k_m}{\mu} (\phi_f - \phi_m) &= f_d,~~~~~~~~~~~~~~~~\text{in}~\Omega_d \times (0,T],\\
\boldsymbol{u}_f &= \frac{-k_f}{\mu} \nabla \phi_f,~~~~~~~~\text{in}~\Omega_d \times (0,T],\\
\eta_m C_{mt} \frac{\partial \phi_m}{\partial t} + \nabla \cdot \boldsymbol{u}_m + \frac{\sigma k_m}{\mu}(\phi_m - \phi_f) &=0,~~~~~~~~~~~~~~~~~~\text{in}~\Omega_d \times (0,T],\\
\boldsymbol{u}_m &= \frac{-k_m}{\mu}\nabla \phi_m,~~~~~~\text{in}~\Omega_d \times (0,T],
\end{split}
\end{equation*}
where the porosity of the microfracture and matrix region are denoted by~$\eta_f$~and~$\eta_m$,  $C_{ft}$~and~$C_{mt}$~are the total compressibility for the matrix and microfractures system, $k_f$~and~$k_m$
denote the intrinsic permeability. Additionally, $\sigma$~is the shape factor characterizing the morphology and dimension of the microfractures, $\mu$~is the dynamic viscosity and~$f_d$~is a source/sink term. The term $\frac{\sigma k_m}{\mu}(\phi_m - \phi_f)$~describes the mass exchange between the matrix and the microfractures.

Along the interface~$\mathbb{I}$, there is a no-exchange situation between the matrix and conduits/microfractures:
$$\boldsymbol{u}_m \cdot \boldsymbol{n}_d = 0,$$
where~$\boldsymbol{n}_d$~denotes the unit outer normal on the interface edges from~$\Omega_d$~to~$\Omega_c$~and~$\boldsymbol{n}_c=-\boldsymbol{n}_d$.
Similar to the Navier-Stokes-Darcy model, the three well-accepted interface conditions are imposed:
\begin{itemize}
  \item The mass conservation between conduit flow and microfracture flow,
  $$\boldsymbol{u}_c \cdot \boldsymbol{n}_c + \boldsymbol{u}_f \cdot \boldsymbol{n}_d = 0.$$
  \item The balance of forces normal,
  $$\frac{\phi_f}{\rho}=p_c - \nu \boldsymbol{n}_c \cdot \nabla \boldsymbol{u}_c \cdot \boldsymbol{n}_c.$$
  \item Beavers-Joseph-Saffman (BJS) interface condition,
  $$-\nu \boldsymbol{\tau_i} \cdot \nabla \boldsymbol{u}_c \cdot \boldsymbol{n}_c =
  \frac{\alpha \nu \sqrt{D}}{\sqrt{\mathrm{trace}(\boldsymbol{\Pi})}}(\boldsymbol{u}_c \cdot \boldsymbol{\tau_i}).$$
\end{itemize}
Here~$\boldsymbol{\tau}_i (i=1,2,3,...,D-1)$~denote mutually orthogonal unit tangential vectors along the interface. In addition, $\rho$~is the density of fluid, $\alpha$~is constant parameters and~$D$~is the spatial dimension. $\boldsymbol{\Pi}=k_f \boldsymbol{\mathrm{I}}$~is the intrinsic permeability of microfractures.

For simplicity,  except on~$\mathbb{I}$, homogeneous Dirichlet boundary conditions are imposed on the boundaries~$\partial \Omega_c$~and~$\partial \Omega_d$, respectively. We have
\begin{equation*}
\begin{split}
\boldsymbol{u}_c &= 0,~~~~~~~~~~~~~\text{on}~\partial \Omega_c \backslash \mathbb{I},\\
\boldsymbol{u}_f \cdot \boldsymbol{n}_d &=0,~~~~~~~~~~~~~\text{on}~\partial \Omega_d \backslash \mathbb{I},\\
\boldsymbol{u}_m \cdot \boldsymbol{n}_d &= 0,~~~~~~~~~~~~~\text{on}~\partial \Omega_d \backslash \mathbb{I},
\end{split}
\end{equation*}
where boundaries~$\partial \Omega_c \backslash \mathbb{I}$~and~$\partial \Omega_d \backslash \mathbb{I}$~are smooth enough and Lipschitzian continuous.

Finally, initial conditions are imposed
\begin{equation*}
\begin{split}
\boldsymbol{u}_c(0,x) &= \boldsymbol{u}_{c0}(x),~~~~~~~\text{in}~\Omega_c\\
\phi_f(0,x) &= \phi_{f0}(x),~~~~~~~\text{in}~\Omega_d,\\
\phi_m(0,x) &= \phi_{m0}(x),~~~~~~\text{in}~\Omega_d.
\end{split}
\end{equation*}

\subsection{The weak formulation of model problem}
To begin with, we introduce some notations. For the Sobolev space~$W^{k,p}(\Omega_{\Lambda}),
\Lambda=c~\text{or}~d$, the integer~$k \geq 0$~and~$1 \leq p \leq \infty$. The scalar value function~$\psi \in W^{k,p}(\Omega_{\Lambda})$~is equipped with the following norms:
\begin{equation*}
\|\psi\|_{W^{k,p}}=
\begin{cases}
(\sum_{|\beta|\leq k} \int_{\Omega_{\Lambda}} |D^{\beta}\psi(x)|^p \mathrm{d}x)^{\frac{1}{p}},~~~~1 \leq p \leq \infty,\\
\sum_{|\beta|\leq k} \mathrm{ess} \sup _{x\in \Omega_{\Lambda}} |D^{\beta}\psi(x)|,~~~~p=\infty,
\end{cases}
\end{equation*}
where
\begin{equation*}
D^{\beta}=\frac{\partial^{|\beta|}}{\partial x_1^{\beta_1}...\partial x_D^{\beta_D}}.
\end{equation*}
For the multi-index~$\beta=(\beta_1,...,\beta_D)$, $\beta_i \geq 0, i=1,...,D$~and~$|\beta|=\beta_1+...+\beta_D$. The vector value function~$\boldsymbol{v}=\{v_j\} \in W^{k,p}(\Omega_{\Lambda})^D$~is equipped with the norm
\begin{equation*}
\|\boldsymbol{v}\|_{W^{k,p}}=
(\sum_{j=1}^D\|v_j\|^2_{W^{k,p}(\Omega_{\Lambda})})^{\frac{1}{2}}.
\end{equation*}
When~$p=2$, $W^{k,2}$~is denoted by~$H^k$. Define
\begin{equation*}
\|\psi\|_k := \|\psi\|_{H^k},~~~~\|\boldsymbol{v}\|_k := \|\boldsymbol{v}\|_{H^k}.
\end{equation*}
When~$k=0$, $W^{0,p}$~is denoted by~$L^p$. Especially, when~$p=2$,
\begin{equation*}
\|\psi\|_0:= \|\psi\|_{L^2},~~~~\|\boldsymbol{v}\|_0 :=\|\boldsymbol{v}\|_{L^2}.
\end{equation*}
In addition, $(\cdot,\cdot)_{\Omega_{\Lambda}}$~denotes the inner product of~$L^2(\Omega_{\Lambda})$.
Define
\begin{equation*}
H(\mathrm{div},\Omega_d):=\{\boldsymbol{v} \in L^2(\Omega_d)^D, \nabla \cdot \boldsymbol{v} \in L^2(\Omega_d)\},
\end{equation*}
and equip the space with norm
\begin{equation*}
\|\boldsymbol{v}\|_{H(\mathrm{div})}=(\|\boldsymbol{v}\|_0^2 + \|\nabla \cdot \boldsymbol{v}\|_0^2)^{\frac{1}{2}}.
\end{equation*}

Next, we recall some inequalities~\cite{brenner2007mathematical}~that are useful in the analysis.\\
$A_1$.(Poincar$\acute{\mathrm{e}}$-Friedrichs~inequality): For all~$\boldsymbol{v}_c \in Y_c$, there exists a positive constant~$C_{PF}$~which only depends on the area of~$\Omega_c$~such that
$$\|\boldsymbol{v}_c\|_0 \leq C_{PF} \|\nabla \boldsymbol{v}_c\|_0.$$
$A_2$.(Trace inequality): For all~$\boldsymbol{v}_c \in Y_c$, there exists a positive constant~$C_T$~which only depends on the area of~$\Omega_c$~such that
$$\|\boldsymbol{v}_c\|_{L^2(\mathbb{I})} \leq C_T \|\boldsymbol{v}_c\|_0^{1/2} \|\nabla \boldsymbol{v}_c\|_0^{1/2}.$$
(General trace inequality): Assume that~$\Omega_c$~is a bounded area with Lipschitz boundary and $\boldsymbol{v}_c \in H^m(\Omega_c)$. Define the trace function~$\gamma_0 \boldsymbol{v}_c, \gamma_1 \boldsymbol{v}_c,...,\gamma_{m-1} \boldsymbol{v}_c$~on~$\partial \Omega_c$, where~$\gamma_j(0 \leq j \leq m-1)$~is a linear continuous map from~$H^m(\Omega_c)$~to~$H^{m-j-\frac{1}{2}}(\partial \Omega_c)$. There exists a constant~$C_T$~which only depends on~$\Omega_c$, such that
\begin{equation*}
\|\boldsymbol{v}_c\|_{H^{m-j-\frac{1}{2}}(\partial \Omega_c)} \leq C_T \|\boldsymbol{v}_c\|_{m}.
\end{equation*}
$A_3$.(Properties of~$H(\mathrm{div})$~space) For all~$\boldsymbol{v}_c \in H(\mathrm{div},\Omega_c)$~and~$\boldsymbol{v}_c \cdot \boldsymbol{n}_d \in H^{-1/2}(\partial \Omega_c)$, there exists a positive constant~$C_{div}$~satisfying
$$\|\boldsymbol{v}_c \cdot \boldsymbol{n}_d\|_{H^{-\frac{1}{2}}(\partial \Omega_c)} \leq C_{div} \|\boldsymbol{v}_c\|_{H(\mathrm{div})}.$$
$A_4$.(Sobolev~interpolation inequality)
\begin{equation*}
\begin{split}
&\|\psi\|_{L^q} \leq C_S \|\psi\|_1,~~~~q \leq 6,\\
&\|\psi\|_{L^{\infty}} \leq C_s \|\psi\|_{W^{1,q}},~~~~q > d^{\ast},\\
&\|\psi\|_{L^q} \leq C_q \|\psi\|_0^{\beta} \|\psi\|_1^{1-\beta},~~~~2 \leq q \leq 6, \beta=\frac{6-q}{2q}.
\end{split}
\end{equation*}

In order to deduce the weak formulation of~dual-porosity-Navier-Stokes~equations, we introduce the following spaces~$(Y_c,Q_c;Y_f,Q_f;Y_m,Q_m)$:
\begin{equation*}
\begin{split}
Y_c &:= \{\boldsymbol{v}_c \in H^1(\Omega_c)^D : \boldsymbol{v}_c=0~~\text{on}~\partial \Omega_c \backslash \mathbb{I}\},\\
Q_c &:= L_0^2(\Omega_c) := \{q \in L^2(\Omega_c): \int_{\Omega_c} q \mathrm{d}x=0 \},\\
Y_f &:= \{\boldsymbol{v}_f \in H(\mathrm{div},\Omega_d): \boldsymbol{v}_f \cdot \boldsymbol{n}_d =0~~\text{on}~\partial \Omega_d \backslash \mathbb{I}\},\\
Q_f &:= L_0^2(\Omega_d) := \{\psi_f \in L^2(\Omega_d): \int_{\Omega_d} \psi_f \mathrm{d}x=0\},\\
Y_m &:= \{\boldsymbol{v}_m \in H(\mathrm{div},\Omega_d): \boldsymbol{v}_m \cdot \boldsymbol{n}_d =0~~\text{on}~\partial \Omega_d \},\\
Q_m &:= L_0^2(\Omega_d) := \{\psi_m \in L^2(\Omega_d): \int_{\Omega_d} \psi_m \mathrm{d}x=0\}.
\end{split}
\end{equation*}
For simplicity, define the product space as follows:
$$\mathcal{X}:=Y_c \times Q_c \times Y_f \times Q_f \times Y_m \times Q_m.$$
Furthermore, define the involving time Sobolev space
\begin{equation*}
\mathcal{X}_T :=L^2(0,T;\mathcal{X}).
\end{equation*}

In weak formulation, the interface term~$\frac{1}{\rho} \int_{\mathbb{I}} \phi_f(\boldsymbol{v}_c-\boldsymbol{v}_f)\cdot \boldsymbol{n}_d \mathrm{d}s$~is difficult to control in numerical calculation. According to~\cite{burman2007a, mahbub2019coupled}, in order to overcome this difficulty of numerical instability which generated by the interface time-splitting, take the Nitsche's interface method and introduce a mesh dependent stabilization term~$\frac{\gamma}{\rho h} \int_{\mathbb{I}} (\boldsymbol{u}_c-\boldsymbol{u}_f) \cdot \boldsymbol{n}_d (\boldsymbol{v}_c-\boldsymbol{v}_f) \cdot \boldsymbol{n}_d \mathrm{d}s$, where~$\gamma >0$~is a penalty parameter and have been nondimensionalize. Based on the second interface condition, the stabilization term is zero in the continuous sense. It conforms that the finite element scheme is well-posedness. We will consider the effect of penalty parameter on the scheme in numerical experiment.

The~dual-porosity-Navier-Stokes~weak formulation is as follows. Assuming that ~$\boldsymbol{f}_c \in L^2(0,T;H^{-1}(\Omega_c)^{D}),$ $f_d \in L^2(0,T;L^2(\Omega_d))$, for all
$(\boldsymbol{v}_c,q;\boldsymbol{v}_f,\psi_f;\boldsymbol{v}_m,\psi_m) \in \mathcal{X}$, find~$(\boldsymbol{u}_c,p_c;\boldsymbol{u}_f,$\\
$\phi_f;\boldsymbol{u}_m,\phi_m) \in \mathcal{X}_T$~satisfying
\begin{equation}\label{yuanshi}
\begin{split}
&\bigg(\frac{\partial \boldsymbol{u}_c}{\partial t},\boldsymbol{v}_c\bigg)_{\Omega_c}
+\frac{\eta_d C_{dt}}{\rho}\bigg(\frac{\partial \phi_d}{\partial t},\psi_d\bigg)_{\Omega_d}
+a_{\Omega_c}(\boldsymbol{u}_c,\boldsymbol{v}_c)
+((\boldsymbol{u}_c \cdot \nabla) \boldsymbol{u}_c,\boldsymbol{v}_c)_{\Omega_c}
-b(\boldsymbol{v_c},p_c)+b(\boldsymbol{u}_c,q)\\
&+a_{\phi_d}(\phi_d,\psi_d)
+a_{\boldsymbol{u}_d}(\boldsymbol{u}_d,\boldsymbol{v}_d)
+b_{\phi_d}(\boldsymbol{u}_d,\psi_d)
-b_{\phi_d}(\boldsymbol{v}_d,\phi_d)\\
&-\frac{1}{\rho}\int_{\mathbb{I}} \phi_f (\boldsymbol{v}_c - \boldsymbol{v}_f) \cdot \boldsymbol{n}_d \mathrm{d}s
+\frac{\gamma}{\rho h}\int_{\mathbb{I}}(\boldsymbol{u}_c-\boldsymbol{u}_f)\cdot \boldsymbol{n}_d (\boldsymbol{v}_c-\boldsymbol{v}_f)\cdot \boldsymbol{n}_d \mathrm{d}s\\
&=(\boldsymbol{f}_c,\boldsymbol{v}_c)_{\Omega_c}
+\frac{1}{\rho}(f_d,\psi_f)_{\Omega_d},
\end{split}
\end{equation}
where
\begin{equation*}
\begin{split}
&[\boldsymbol{u}_f,\boldsymbol{u}_m]^T=\boldsymbol{u}_d,~~~~
[\boldsymbol{v}_f,\boldsymbol{v}_m]^T=\boldsymbol{v}_d,~~~~
[\phi_f,\phi_m]^T=\phi_d,~~~~
[\psi_f,\psi_m]^T=\psi_d,\\
&[\eta_f,\eta_m]^T=\eta_d,~~~~
[C_{ft},C_{mt}]^T=C_{dt},\\
&\frac{\eta_d C_{dt}}{\rho}(\frac{\partial \phi_d}{\partial t},\psi_d)_{\Omega_d}
=\frac{\eta_f C_{ft}}{\rho}(\frac{\partial \phi_f}{\partial t},\psi_f)_{\Omega_d}
+\frac{\eta_m C_{mt}}{\rho}(\frac{\partial \phi_m}{\partial t},\psi_m)_{\Omega_d},\\
&a_{\Omega_c}(\boldsymbol{u}_c,\boldsymbol{v}_c)=\nu (\nabla \boldsymbol{u}_c, \nabla \boldsymbol{v}_c)_{\Omega_c}
+\sum_{i=1}^{D-1}\int_{\mathbb{I}}\frac{\alpha \nu \sqrt{D}}{\sqrt{\mathrm{trace}(\Pi)}} (\boldsymbol{u}_c \cdot \boldsymbol{\tau}_i) (\boldsymbol{v}_c \cdot \boldsymbol{\tau}_i) \mathrm{d}s,\\
&a_{\phi_d}(\phi_d,\psi_d)=\frac{\sigma k_m}{\rho \mu}(\phi_f-\phi_m,\psi_f)_{\Omega_d}
+\frac{\sigma k_m}{\rho \mu}(\phi_m-\phi_f,\psi_m)_{\Omega_d},\\
&a_{\boldsymbol{u}_d}(\boldsymbol{u}_d,\boldsymbol{v}_d)=
\frac{1}{\rho}(\mu k_f^{-1} \boldsymbol{u}_f,\boldsymbol{v}_f)_{\Omega_d}
+\frac{1}{\rho}(\mu k_m^{-1} \boldsymbol{u}_m,\boldsymbol{v}_m)_{\Omega_d},\\
&b_{\phi_d}(\boldsymbol{u}_d,\psi_d)=\frac{1}{\rho}(\nabla \cdot \boldsymbol{u}_f,\psi_f)_{\Omega_d}
+\frac{1}{\rho}(\nabla \cdot \boldsymbol{u}_m,\psi_m)_{\Omega_d},\\
&b(\boldsymbol{v}_c,p_c)=(p_c,\nabla \cdot \boldsymbol{v}_c)_{\Omega_c}.
\end{split}
\end{equation*}

As we all know, the velocity in the conduit flow is faster than that in the dual-porosity flow, therefore it is reasonable to apply different time steps in different subdomain. For~$\Omega_c$, divide the time interval~$[0,T]$~into~$N >0$~averagely. For the segments~$[t_n,t_{n+1}](n=0,1,...,N-1)$~satisfying
\begin{equation*}
0=t_0 \leq t_1 \leq ... \leq t_{N-1} \leq t_N=T,~~~t_n=n\Delta t,
\end{equation*}
and the time step is~$\Delta t=\frac{T}{N}$.

\begin{figure}[htbp]
  \centering
  \includegraphics[width=10cm]{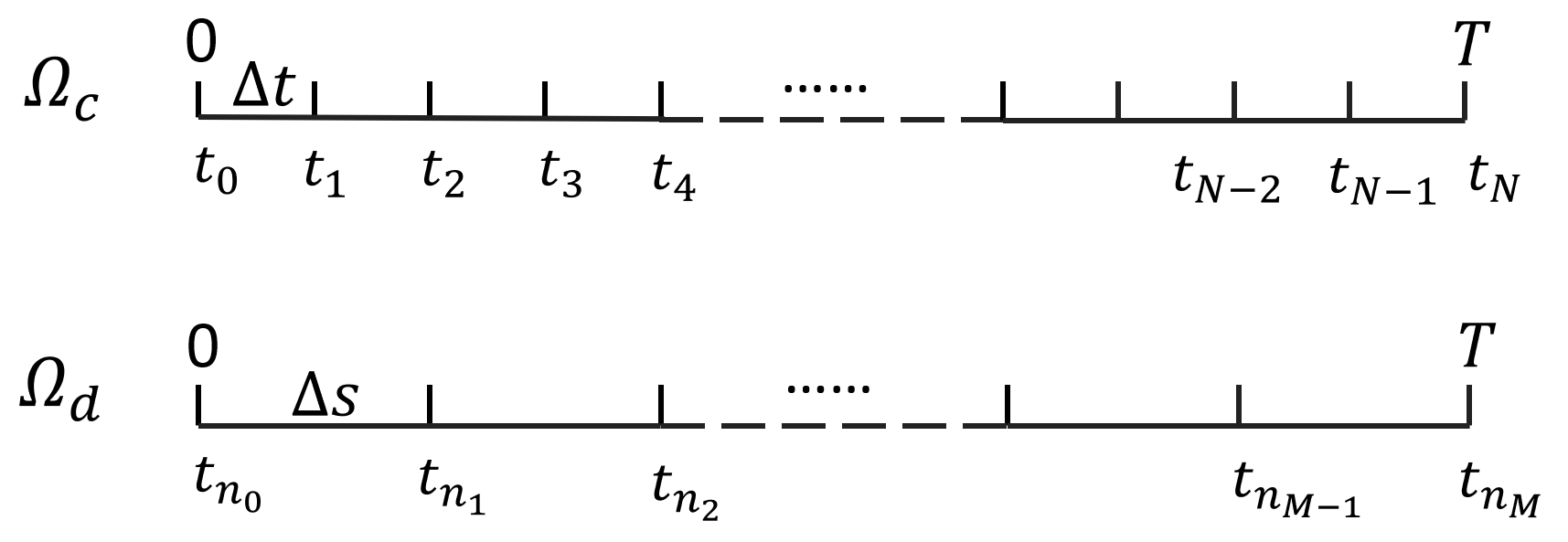}\\
  \caption{Different time steps in different subdomian}
\end{figure}

For~$\Omega_d$, divide the time interval~$[0,T]$~into~$M >0$~averagely. For the segments~$[t_{n_k},t_{n_{k+1}}](k=0,1,...,M-1)$~satisfying
\begin{equation*}
0=t_{n_0} \leq t_{n_1} \leq ... \leq t_{n_{M-1}} \leq t_{n_M}=T,~~~~t_{n_k}=k \Delta s,
\end{equation*}
and the time step is~$\Delta s=\frac{T}{M}$.
Note that time level in~$\Omega_c$~and that in~$\Omega_d$~are nested, i.e.~$N=rM$, where integer time step ratio~$r=\frac{\Delta s}{\Delta t}$.

Following~\cite{Si2016Unconditional, cao2020JCAM}, we have
\begin{equation*}
\bigg(\frac{\partial \boldsymbol{u}_c}{\partial t} + \boldsymbol{u}_c \cdot \nabla \boldsymbol{u}_c \bigg)\bigg|_{t=t_{n+1}}\approx \frac{\boldsymbol{u}_c(t_{n+1})-\bar{\boldsymbol{u}}_c(t_n)}{\Delta t},
\end{equation*}
where~$\bar{\boldsymbol{u}}_c(t_n)=\boldsymbol{u}_c(\bar{x},t_n), \bar{x}=x-\boldsymbol{u}_c(t_n)\Delta t$.

Therefore, we obtain the characteristic version of variational formulation.\\
\textbf{A. The characteristic version of variational formulation}:
\begin{equation}\label{CBF}
\begin{split}
&(d_t \boldsymbol{u}_c(t_{n+1}),\boldsymbol{v}_c)_{\Omega_c}
+\frac{\eta_d C_{dt}}{\rho}(d_t \phi_d(t_{n+1}),\psi_d)_{\Omega_d}
+a_{\Omega_c}(\boldsymbol{u}_c(t_{n+1}),\boldsymbol{v}_c)
-b(\boldsymbol{v}_c,p_c(t_{n+1}))\\
&+b(\boldsymbol{u}_c(t_{n+1}),q)
+a_{\phi_d}(\phi_d(t_{n+1}),\psi_d)
+a_{\boldsymbol{u}_d}(\boldsymbol{u}_d(t_{n+1}),\boldsymbol{v}_d)
+b_{\phi_d}(\boldsymbol{u}_d(t_{n+1}),\psi_d)\\
&-b_{\phi_d}(\boldsymbol{v}_d,\phi_d(t_{n+1}))
-\frac{1}{\rho}\int_{\mathbb{I}}\phi_f(t_{n+1})(\boldsymbol{v}_c-\boldsymbol{v}_f)\cdot \boldsymbol{n}_d \mathrm{d}s\\
&+\frac{\gamma}{\rho h}\int_{\mathbb{I}}(\boldsymbol{u}_c(t_{n+1})-\boldsymbol{u}_f(t_{n+1}))\cdot \boldsymbol{n}_d (\boldsymbol{v}_c-\boldsymbol{v}_f)\cdot \boldsymbol{n}_d \mathrm{d}s\\
&=(\boldsymbol{f}_c(t_{n+1}),\boldsymbol{v}_c)_{\Omega_c}
+\frac{1}{\rho}(f_{d}(t_{n+1}),\psi_f)_{\Omega_d}
-\bigg(\frac{\boldsymbol{u}_c(t_n)-\bar{\boldsymbol{u}}_c(t_n)}{\Delta t},\boldsymbol{v}_c\bigg)_{\Omega_c}
+(R_{tr}^{n+1},\boldsymbol{v}_c)_{\Omega_c}\\
&+\frac{\eta_d C_{dt}}{\rho}\bigg(d_t \phi_d(t_{n+1})- \frac{\partial \phi_d}{\partial t}(t_{n+1}),\psi_d\bigg)_{\Omega_d},
\end{split}
\end{equation}
where~$d_t \boldsymbol{u}_c(t_{n+1})=\frac{\boldsymbol{u}_c(t_{n+1})-\boldsymbol{u}_c(t_n)}{\Delta t},
R_{tr}^{n+1}=\frac{\boldsymbol{u}_c(t_{n+1})-\bar{\boldsymbol{u}}_c(t_n)}{\Delta t}
-\frac{\partial \boldsymbol{u}_c}{\partial t}(t_{n+1})-(\boldsymbol{u}_c(t_{n+1}) \cdot \nabla )\boldsymbol{u}_c(t_{n+1})$.
\\

We also introduce the discrete~Gronwall~lemma~\cite{heywood1990finite}.
\begin{lemma} Assume that~$E \geq 0$, for any integer~$M \geq 0$, $\kappa_m, A_m, B_m, C_m \geq 0$~satisfying
\begin{equation*}
A_M + \Delta t \sum_{m=0}^M B_m \leq \Delta t \sum_{m=0}^M \kappa_m A_m + \Delta t \sum_{m=0}^M C_m +E.
\end{equation*}
For all~$m$, assume that
\begin{equation*}
\kappa_m \Delta t < 1,
\end{equation*}
and set~$g_m=(1-\kappa_m\Delta t)^{-1}$, then
\begin{equation*}
A_M + \Delta t\sum_{m=0}^M B_m \leq \exp(\Delta t \sum_{m=0}^M g_m \kappa_m)(\Delta t \sum_{m=0}^M C_m + E).
\end{equation*}
\end{lemma}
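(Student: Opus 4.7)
The plan is to prove this by induction on $M$, with the crucial algebraic device being the absorption of the term $\Delta t\kappa_M A_M$ from the right-hand side into the left-hand side via multiplication by $g_M$. The key elementary fact I will repeatedly invoke is that, since $\kappa_M\Delta t<1$, one has $g_M = 1+g_M\kappa_M\Delta t$, and therefore by $1+x\le e^x$ applied at $x=g_M\kappa_M\Delta t\ge 0$, it follows that $g_M\le \exp(g_M\kappa_M\Delta t)$. This is the estimate that converts the telescoping product $\prod_{m=0}^M g_m$ into the exponential $\exp\bigl(\Delta t\sum_{m=0}^M g_m\kappa_m\bigr)$ appearing in the conclusion.

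For the base case $M=0$, the hypothesis reads $A_0+\Delta t B_0\le \Delta t\kappa_0 A_0+\Delta t C_0+E$. Transferring $\Delta t\kappa_0 A_0$ to the left and multiplying by $g_0\ge 1$ gives $A_0+\Delta tB_0\le g_0(\Delta t C_0+E)\le \exp(g_0\kappa_0\Delta t)(\Delta t C_0+E)$, as required. For the inductive step, I rewrite the hypothesis at level $M$ as
\begin{equation*}
(1-\kappa_M\Delta t)A_M+\Delta t\sum_{m=0}^{M}B_m \le \Delta t\sum_{m=0}^{M-1}\kappa_m A_m+\Delta t\sum_{m=0}^{M}C_m+E,
\end{equation*}
multiply by $g_M$, and then need to dispose of the tail $g_M\Delta t\sum_{m=0}^{M-1}\kappa_m A_m$. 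The natural way to do this is to strengthen the inductive statement so that it controls not just $A_{M-1}+\Delta t\sum_{m=0}^{M-1}B_m$ but also the weighted partial sums $\Delta t\sum_{m=0}^{M-1}\kappa_m A_m$; alternatively, one iterates the one-step absorption argument, peeling off $\kappa_{M-1}A_{M-1}$, then $\kappa_{M-2}A_{M-2}$, and so on, collecting a product $\prod_{m=0}^{M}g_m$ of amplification factors in front of the data $\Delta t\sum_{m=0}^M C_m+E$.

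The main obstacle is this bookkeeping in the inductive step: the direct hypothesis bounds a different quantity than what the right-hand side of the next level requires, so one must set up the right monotone quantity to carry through the recursion. Once the recursion is correctly formulated, the final exponential estimate follows from the pointwise bound $g_m\le \exp(g_m\kappa_m\Delta t)$ applied $M+1$ times, combined with the fact that products of exponentials of nonnegative numbers add in the exponent, giving $\prod_{m=0}^M g_m\le \exp\bigl(\Delta t\sum_{m=0}^M g_m\kappa_m\bigr)$. Substituting this into the recursively obtained bound yields the claimed inequality for $A_M+\Delta t\sum_{m=0}^M B_m$.
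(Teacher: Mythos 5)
The paper does not actually prove this lemma: it is the discrete Gronwall lemma quoted directly from Heywood and Rannacher \cite{heywood1990finite}, so there is no in-paper argument to compare against, and your sketch is essentially the standard proof from that reference. Its two pillars are sound: the absorption step, which uses $0\le \kappa_n\Delta t<1$ to move $\kappa_n\Delta t A_n$ to the left and pull out the factor $g_n\ge 1$, and the conversion $\prod_{m=0}^{M}g_m\le \exp\bigl(\Delta t\sum_{m=0}^{M}g_m\kappa_m\bigr)$, which follows from the identity $g_m=1+g_m\kappa_m\Delta t$ together with $1+x\le e^x$. What you leave unexecuted is exactly the bookkeeping you flag, and it closes as follows: the hypothesis must be invoked at every level $n\le M$, not only at $M$ (this is the intended reading of the lemma), and the right monotone quantity is $S_n+R$ with $S_n=\Delta t\sum_{m=0}^{n}\kappa_m A_m$ and $R=\Delta t\sum_{m=0}^{M}C_m+E$ frozen at the final level (legitimate since $C_m\ge 0$). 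The level-$n$ hypothesis, after dropping the nonnegative $B$-sum, gives $A_n\le S_{n-1}+\kappa_n\Delta t A_n+R$, hence $A_n\le g_n(S_{n-1}+R)$, hence $S_n+R\le (1+g_n\kappa_n\Delta t)(S_{n-1}+R)=g_n(S_{n-1}+R)$; iterating from $S_{-1}=0$ yields $S_M+R\le \bigl(\prod_{m=0}^{M}g_m\bigr)R$, and since the level-$M$ hypothesis bounds $A_M+\Delta t\sum_{m=0}^{M}B_m$ by $S_M+R$, the claimed exponential estimate follows. So your proposal is correct in substance; only this routine strengthening of the induction needs to be written out, and your alternative of iterated one-step absorption is just this same recursion in disguise.
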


\subsection{The discrete formulation of model problem}
Let~$T_h$~be a uniform simplex partition of~$\overline{\Omega}_c \cup \overline{\Omega}_d$, and~$h:=\{\max_{K \in \mathcal{T}_h}h_K: h_K=diam(K)\}$. $T_h^c$~and~$T_h^d$~denote the partition of subdomain~$\Omega_c$~and~$\Omega_d$, respectively. Furthermore, the partition matches along the interface~$\mathbb{I}$, that is to say, there is no hanging point on~$\mathbb{I}$. If~$D=2$, adjacent elements share the same edge; If~$D=3$, adjacent elements share the same face.

Choose the finite element space~$(Y_c^h,Q_c^h)$~for Navier-Stokes model satisfying the velocity-pressure inf-sup condition: there exists~$\chi_c$~independent of~$h$, such that
\begin{equation*}
\begin{split}
Y_c^h \subset Y_c,Q_c^h \subset Q_c &,\\
\inf_{0 \neq q^h \in Q_c^h} \sup_{0 \neq \boldsymbol{v}_c^h \in Y_c^h} \frac{(q^h,\nabla \cdot \boldsymbol{v}_c^h)_{\Omega_c}}{\|\nabla \boldsymbol{v}_c^h\|_0 \|q^h\|_0} \geq \chi_c &.
\end{split}
\end{equation*}
Define a discrete divergence-free velocity space
$$V_c^h=\{\boldsymbol{v}_c \in Y_c^h: (q,\nabla \cdot \boldsymbol{v}_c)_{\Omega_c}=0, \forall q \in Q_c^h \}.$$

For dual-porosity model, choose the finite element space~$(Y_f^h,Q_f^h)$~also satisfying the velocity-pressure inf-sup condition: there exists a positive constant~$\chi_f$, for all~$\psi_f^h \in Q_f^h$, we have
\begin{equation*}
\begin{split}
Y_f^h \subset Y_f, Q_f^h \subset Q_f,&\\
\sup_{0\neq \boldsymbol{v}_f \in Y_f^h} \frac{(\psi_f^h,\nabla \cdot \boldsymbol{v}_f^h)_{\Omega_d}}{\|\boldsymbol{v}_f^h\|_{H(\mathrm{div},\Omega_d)}} \geq \chi_f \|\psi_f^h\|_0.&
\end{split}
\end{equation*}
Similarly, for all~$\psi_m^h \in Q_m^h$, there exists a positive constant~$\chi_m$, we have
\begin{equation*}
\begin{split}
Y_m^h \subset Y_m, Q_m^h \subset Q_m,&\\
\sup_{0 \neq \boldsymbol{v}_m \in Y_m^h} \frac{(\psi_m^h,\nabla \cdot \boldsymbol{v}_m^h)_{\Omega_d}}{\|\boldsymbol{v}_m^h\|_{H(\mathrm{div},\Omega_d)}} \geq \chi_m \|\psi_m^h\|_0.&
\end{split}
\end{equation*}

In the next place, we introduce some inequalities~\cite{brenner2007mathematical} and lemmas~\cite{chen2002error} that may be used in discrete spaces.\\
$B_1.$(The inverse inequality) When~$1 \leq p,q \leq \infty, 0 \leq l \leq k$,
\begin{equation}\label{inverse2}
\|\boldsymbol{u}_c^h\|_{W^{k,p}} \leq C_{inv} h^{-\max\{0,\frac{d}{q}-\frac{d}{p}\}}h^{l-k} \|\boldsymbol{u}_c^h\|_{W^{l,q}},~~~~\forall \boldsymbol{u}_c^h \in Y_c^h.
\end{equation}
$B_2.$(The trace inverse inequality) For all~$\psi_f^h \in Q_f^h$, we have
\begin{equation}\label{inverseTrace}
\|\psi_f^h\|_{L^2(\mathbb{I})} \leq \tilde{C}_{inv} h^{-1/2} \|\psi_f^h\|_0,~~~~\forall \psi_f^h \in Q_f^h.
\end{equation}
$B_3.$(The discrete Sobolev inequality~\cite{achdou2000convergence,brenner2007mathematical}) There exists~$C_{DS} > 0$~such that for all~$\boldsymbol{u}_c^h \in Y_c^h$, the following inequalities hold:
\begin{equation}\label{DS}
\begin{split}
\|\boldsymbol{u}_c^h\|_{0,\infty} &\leq C_{DS} (1+|\ln(h)|)^{1/2} \|\boldsymbol{u}_c^h\|_{1},~~~~\text{in}~d=2,\\
\|\boldsymbol{u}_c^h\|_{0,\infty} &\leq C_{DS} h^{-\frac{1}{2}} \|\boldsymbol{u}_c^h\|_{1},~~~~\text{in}~d=3.
\end{split}
\end{equation}

\begin{lemma}[\cite{chen2002error, zhang2010a}]\label{LEM}
Assume that~$\boldsymbol{u}_c^n \in W^{1,\infty}$, for all~$\boldsymbol{u}_c^n \in H_0^1(\Omega_c)$, we have
\begin{equation*}
(\hat{\boldsymbol{u}}_c^n,\hat{\boldsymbol{u}}_c^n)_{\Omega_c}-(\boldsymbol{u}_c^n,\boldsymbol{u}_c^n)_{\Omega_c}
\leq \hat{C} \Delta t (\boldsymbol{u}_c^n,\boldsymbol{u}_c^n)_{\Omega_c},
\end{equation*}
where~$\hat{\boldsymbol{u}}_c^n=\boldsymbol{u}_c^n(x-\boldsymbol{u}_c^n\Delta t)$~and~$\hat{C}$~is a constant which is independent of the spatial and temporal grid sizes~$h$~and~$\Delta t$.
\end{lemma}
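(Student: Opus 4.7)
The plan is to view $\hat{\boldsymbol{u}}_c^n(x)=\boldsymbol{u}_c^n(x-\boldsymbol{u}_c^n(x)\Delta t)$ as the pullback of $\boldsymbol{u}_c^n$ under the characteristic map
$\Phi(x):=x-\boldsymbol{u}_c^n(x)\Delta t$
and to estimate the change in the $L^2$-norm by a standard change-of-variables argument controlled by the Jacobian of $\Phi$. Since $\boldsymbol{u}_c^n\in W^{1,\infty}(\Omega_c)$ and $\boldsymbol{u}_c^n\in H_0^1(\Omega_c)$, I first extend $\boldsymbol{u}_c^n$ by zero to $\mathbb{R}^D$, so that $\hat{\boldsymbol{u}}_c^n$ is well defined on all of $\Omega_c$ without needing to track whether the foot of the characteristic lies inside $\Omega_c$ or not.

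Next, I would verify that $\Phi$ is a bi-Lipschitz diffeomorphism from $\mathbb{R}^D$ onto itself provided $\Delta t$ is small enough that $\Delta t\|\nabla\boldsymbol{u}_c^n\|_{L^\infty}<1/2$, which is the standard smallness condition behind the lemma. With $D\Phi(x)=I-\Delta t\,\nabla\boldsymbol{u}_c^n(x)$, the Jacobian determinant
$J(x)=\det\bigl(I-\Delta t\,\nabla\boldsymbol{u}_c^n(x)\bigr)$
can be expanded as $J(x)=1-\Delta t\,\nabla\!\cdot\!\boldsymbol{u}_c^n(x)+O\!\bigl(\Delta t^2\|\nabla\boldsymbol{u}_c^n\|_{L^\infty}^2\bigr)$, so for $\Delta t$ sufficiently small one obtains the two-sided bound
$1-C_1\Delta t\|\boldsymbol{u}_c^n\|_{W^{1,\infty}}\le |J(x)|\le 1+C_2\Delta t\|\boldsymbol{u}_c^n\|_{W^{1,\infty}}$,
and hence $|J(x)|^{-1}\le 1+\hat C\,\Delta t$ for a constant $\hat C$ depending only on $\|\boldsymbol{u}_c^n\|_{W^{1,\infty}}$ (and thus independent of $h$ and $\Delta t$).

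Then the substitution $y=\Phi(x)$, using that $\boldsymbol{u}_c^n$ vanishes outside $\Omega_c$, gives
\begin{equation*}
(\hat{\boldsymbol{u}}_c^n,\hat{\boldsymbol{u}}_c^n)_{\Omega_c}
=\int_{\Omega_c}\bigl|\boldsymbol{u}_c^n(\Phi(x))\bigr|^2\,\mathrm dx
\le\int_{\mathbb{R}^D}\bigl|\boldsymbol{u}_c^n(y)\bigr|^2\,|J(\Phi^{-1}(y))|^{-1}\,\mathrm dy
\le (1+\hat C\Delta t)\,(\boldsymbol{u}_c^n,\boldsymbol{u}_c^n)_{\Omega_c},
\end{equation*}
which is exactly the claimed inequality after subtracting $(\boldsymbol{u}_c^n,\boldsymbol{u}_c^n)_{\Omega_c}$.

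The main obstacle I expect is the bookkeeping at the boundary: a priori $\Phi$ need not map $\Omega_c$ into itself, so one must either argue via the zero extension as above (which requires checking that the change of variables remains valid despite $\boldsymbol{u}_c^n$ being only $H_0^1$, not compactly supported in the interior) or, alternatively, split $\Omega_c$ into a tubular neighborhood of $\partial\Omega_c$ where $\boldsymbol{u}_c^n$ is small by the Dirichlet trace vanishing, and a compactly contained interior where $\Phi$ is genuinely an interior diffeomorphism. Either route yields the $O(\Delta t)$ perturbation of the $L^2$-norm; once the Jacobian bound $|J|^{-1}\le 1+\hat C\Delta t$ is in hand, the rest is a one-line change of variables.
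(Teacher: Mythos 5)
The paper does not actually prove this lemma --- it is quoted directly from the two cited references --- so there is no in-paper argument to measure you against; your change-of-variables proof is precisely the standard argument behind those references, and it is correct. The one point worth making explicit is that the inequality genuinely requires the implicit smallness condition $\Delta t\,\|\nabla \boldsymbol{u}_c^n\|_{L^\infty}<1$ (you assume $<1/2$) so that $\Phi(x)=x-\boldsymbol{u}_c^n(x)\Delta t$ is injective with Jacobian determinant bounded away from zero; the lemma as stated suppresses this hypothesis, but the paper's use of it is consistent, since the induction step establishes $\Delta t\,\|\boldsymbol{u}_c^n\|_{W^{1,\infty}}\le \tfrac14$ before the lemma is invoked, and your constant $\hat C$ then depends only on $\|\boldsymbol{u}_c^n\|_{W^{1,\infty}}$, hence is independent of $h$ and $\Delta t$ as required. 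Your treatment of the boundary via the zero extension is also sound: an $H_0^1(\Omega_c)\cap W^{1,\infty}(\Omega_c)$ field on a Lipschitz domain extends by zero to a globally Lipschitz field, so $\Phi$ is bi-Lipschitz on $\mathbb{R}^D$, and Rademacher's theorem together with the area formula for injective Lipschitz maps justifies the substitution $y=\Phi(x)$ even though $\Phi$ is only Lipschitz rather than $C^1$; monotonicity of the integral then absorbs the fact that $\Phi(\Omega_c)$ need not coincide with $\Omega_c$.
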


For simplicity, for~$t_{n+1}, t_{n_{k+1}} \in [0,T]$, $n_{k+1}=k r$, we use~$(\boldsymbol{u}_c^{n+1},\boldsymbol{p}^{n+1};\boldsymbol{u}_f^{n_{k+1}},\phi_f^{n_{k+1}};\boldsymbol{u}_m^{n_{k+1}},$\\
$\phi_m^{n_{k+1}})$ denotes~$(\boldsymbol{u}_c^{h,n+1},\boldsymbol{p}^{h,n+1};\boldsymbol{u}_f^{h,n_{k+1}},\phi_f^{h,n_{k+1}};\boldsymbol{u}_m^{h,n_{k+1}},\phi_m^{h,n_{k+1}})$. Define the following product space
$$\mathcal{X}^h := Y_c^h \times Q_c^h \times Y_f^h \times Q_f^h \times Y_m^h \times Q_m^h \subset \mathcal{X}.$$
And define the~$L^2$~bounded linear projection operator~$P^h: \mathcal{X}\rightarrow \mathcal{X}^h$.

For the formulation~A, we take the mixed finite element method for space, the backward-Euler discretization in time and the decoupled approach by partition time-stepping method. These yield that\\
~\\
\textbf{B. The fully discrete decoupled modified characteristic scheme}\\
Given~$\boldsymbol{u}_c^0=P^h \boldsymbol{u}_{c0}, \phi_m^0 = P^h \phi_{m0}, \phi_f^0 =P^h \phi_{f0}$, for all~$(\boldsymbol{v}_c^h,q^h;\boldsymbol{v}_f^h,\psi_f^h;\boldsymbol{v}_m^h,\psi_m^h) \in \mathcal{X}^h$, find $(\boldsymbol{u}_c^{n+1},p_c^{n+1};\boldsymbol{u}_f^{n_{k+1}},\phi_f^{n_{k+1}};\boldsymbol{u}_m^{n_{k+1}},\phi_m^{n_{k+1}}) \in \mathcal{X}^h, n=n_k, n_k+1,...,n_{k+1}-1$~such that the following formulations are established.
\begin{itemize}
  \item Step~1\\
  \begin{align}
  &(d_t \boldsymbol{u}_c^{n+1},\boldsymbol{v}_c^h)_{\Omega_c}
  +a_{\Omega_c}(\boldsymbol{u}_c^{n+1},\boldsymbol{v}_c^h)
  -b(\boldsymbol{v}_c^h,p_c^{n+1})
  +b(\boldsymbol{u}_c^{n+1},q^h)
  -\frac{1}{\rho}\int_{\mathbb{I}}\phi_f^{n_k} \boldsymbol{v}_c^h \cdot \boldsymbol{n}_d\mathrm{d}s \nonumber \\
  &+\frac{\gamma}{\rho h}\int_{\mathbb{I}} ((\boldsymbol{u}_c^{n+1}-\boldsymbol{u}_f^{n_k})\cdot \boldsymbol{n}_d)(\boldsymbol{v}_c^h \cdot \boldsymbol{n}_d) \mathrm{d}s
  =(\boldsymbol{f}_c(t_{n+1}),\boldsymbol{v}_c^h)_{\Omega_c}
  -\bigg(\frac{\boldsymbol{u}_c^n-\hat{\boldsymbol{u}}_c^n}{\Delta t},\boldsymbol{v}_c^h\bigg)_{\Omega_c},\label{Step1}
  \end{align}
  where~$d_t \boldsymbol{u}_c^{n+1}=\frac{\boldsymbol{u}_c^{n+1}-\boldsymbol{u}_c^n}{\Delta t}, \hat{\boldsymbol{u}}_c^n=\boldsymbol{u}_c^n(\hat{x}), \hat{x}=x-\boldsymbol{u}_c^n\Delta t$,
  and~$\Delta t$~is a small step in~$\Omega_c$,
  \begin{equation*}
  \begin{split}
  a_{\Omega_c}(\boldsymbol{u}_c^{n+1},\boldsymbol{v}_c^h)
  =\nu (\nabla \boldsymbol{u}_c^{n+1},\nabla \boldsymbol{v}_c^h)_{\Omega_c}
  +\sum_{i=1}^{D-1} \int_{\mathbb{I}}\frac{\alpha \nu \sqrt{D}}{\sqrt{\mathrm{trace}(\Pi)}}(\boldsymbol{u}_c^{n+1} \cdot \boldsymbol{\tau}_i)(\boldsymbol{v}_c^h \cdot \boldsymbol{\tau}_i)\mathrm{d}s.
  \end{split}
  \end{equation*}
  \item Step~2\\
  \begin{align}
  &\frac{\eta_m C_{mt}}{\rho}(d_s \phi_m^{n_{k+1}},\psi_m^h)_{\Omega_d}
  + \frac{1}{\rho}(\nabla \cdot \boldsymbol{u}_m^{n_{k+1}},\psi_m^h)_{\Omega_d}
  + \frac{1}{\rho}(\mu k_m^{-1} \boldsymbol{u}_m^{n_{k+1}},\boldsymbol{v}_m^h)_{\Omega_d}\nonumber \\
  &-\frac{1}{\rho}(\phi_m^{n_{k+1}},\nabla \cdot \boldsymbol{v}_m^h)_{\Omega_d}
  + \frac{\sigma k_m}{\rho \mu}(\phi_m^{n_{k+1}}-\phi_f^{n_k},\psi_m^h)_{\Omega_d}=0,\label{Step2}
  \end{align}
  where~$d_s \phi_m^{n_{k+1}}=\frac{\phi_m^{n_{k+1}}-\phi_m^{n_k}}{\Delta s}$~and~$\Delta s$~is a large step in~$\Omega_d$.\\

  \item Step~3\\
  \begin{align}
  &\frac{\eta_f C_{ft}}{\rho}(d_s \phi_f^{n_{k+1}},\psi_f^h)_{\Omega_d}
  +\frac{1}{\rho}(\nabla \cdot \boldsymbol{u}_f^{n_{k+1}},\psi_f^h)_{\Omega_d}
  +\frac{1}{\rho}(\mu k_f^{-1} \boldsymbol{u}_f^{n_{k+1}},\boldsymbol{v}_f^h)_{\Omega_d} \nonumber\\
  &-\frac{1}{\rho}(\phi_f^{n_{k+1}},\nabla \cdot \boldsymbol{v}_f^h)_{\Omega_d}
  +\frac{\sigma k_m}{\rho \mu}(\phi_f^{n_{k+1}}-\phi_m^{n_k},\psi_f^h)_{\Omega_d}
  +\frac{1}{\rho}\int_{\mathbb{I}} \phi_f^{n_k} \boldsymbol{v}_f^h \cdot \boldsymbol{n}_d \mathrm{d}s \nonumber\\
  &-\frac{\gamma}{\rho h}\int_{\mathbb{I}}((S^{n_{k+1}}-\boldsymbol{u}_f^{n_{k+1}})\cdot \boldsymbol{n}_d)(\boldsymbol{v}_f^h\cdot \boldsymbol{n}_d)\mathrm{d}s
  = \frac{1}{\rho}(f_d(t_{n_{k+1}}),\psi_f^h)_{\Omega_d},\label{Step3}
  \end{align}
  where~$d_s \phi_f^{n_{k+1}}=\frac{\phi_f^{n_{k+1}}-\phi_f^{n_k}}{\Delta s}$~and $S^{n_{k+1}}=\frac{1}{r}\sum_{n=n_k}^{n_{k+1}-1} \boldsymbol{u}_c^{n+1}$.\\
\end{itemize}

\begin{remark}
Note that Step 1 and Step 2 can be calculated at the same time. In the following numerical experiments, we take parallel algorithm for Step 1 and Step 2. By this means, we can improve the computing efficiency.
\end{remark}

\section{Stability of the method}
Hereafter, $C > 0$~denotes a generic constant whose value may be different from place to place, but which is independent of the spatial and temporal grid sizes~$h$~and~$\Delta t$, respectively.
\begin{theorem}
If~$\boldsymbol{f}_c \in L^2(0,T;H^{-1}(\Omega_c)), f_d \in L^2(0,T;L^2(\Omega_d))$. Assume that~$\hat{C} \Delta t <1$, we get the stability for the fluid velocity in the first large time interval~$[0,t_{n_1}]$, for any~$0 \leq J \leq r-1$
\begin{equation*}
\begin{split}
&\|\boldsymbol{u}_c^{J+1}\|_0^2
+\frac{\nu \Delta t}{2} \sum_{n=0}^{J}\|\nabla \boldsymbol{u}_c^{n+1}\|_0^2
+\frac{2\alpha \nu \Delta t}{\sqrt{k}}\sum_{n=0}^{J}\|P_{\tau}(\boldsymbol{u}_c^{n+1})\|_{L^2(\mathbb{I})}^2\\
&+\frac{\gamma \Delta t}{\rho h}\sum_{n=0}^{J}\|(\boldsymbol{u}_c^{n+1}-\boldsymbol{u}_f^{n_k})\cdot \boldsymbol{n}_d\|_{L^2(\mathbb{I})}^2
+\frac{\gamma \Delta t}{\rho h}\sum_{n=0}^{J} \|\boldsymbol{u}_c^{n+1} \cdot \boldsymbol{n}_d\|_{L^2(\mathbb{I})}^2\\
&\leq C\bigg(\frac{2 C_{PF}^2 \Delta t}{\nu}\sum_{n=0}^{J}\|\boldsymbol{f}_c(t_{n+1})\|_{H^{-1}}^2
+\frac{C_T^2 C_{inv}^2 C_{PF}\Delta t}{\rho^2 \nu h}\sum_{n=0}^{J} \|\phi_f^{0}\|_0^2
+\frac{\gamma \Delta t}{\rho h}\sum_{n=0}^{J}\|\boldsymbol{u}_f^{0} \cdot \boldsymbol{n}_d\|_{L^2(\mathbb{I})}^2
+\|\boldsymbol{u}_c^{0}\|_0^2 \bigg).
\end{split}
\end{equation*}
On the other hand, under the condition
$$\gamma \geq \frac{4 C_{inv}^2}{\eta_f C_{ft}}\bigg( \frac{\Delta t}{1-\Delta t -2 \hat{C} \Delta t }\bigg),$$
we obtain the stability for the time interval~$[0,t_{n_M}]$
\begin{align}
&\|\boldsymbol{u}_c^{n_{M}}\|_0^2 + \nu \Delta t \sum_{k=0}^{M-1}\sum_{n=n_k}^{n_{k+1}-1}\|\nabla \boldsymbol{u}_c^{n+1}\|_0^2
+\frac{2\alpha \nu \Delta t}{\sqrt{k}}\sum_{k=0}^{M-1} \sum_{n=n_k}^{n_{k+1}-1}\|P_{\tau} (\boldsymbol{u}_c^{n+1})\|_{L^2(\mathbb{I})}^2
+\frac{2\mu \Delta s}{\rho k_m}\sum_{k=0}^{M-1}\|\boldsymbol{u}_m^{n_{k+1}}\|_0^2 \nonumber \\
&+\frac{\gamma \Delta t}{2\rho h}\sum_{k=0}^{M-1} \sum_{n=n_k}^{n_{k+1}-1}\|(\boldsymbol{u}_c^{n+1}-\boldsymbol{u}_f^{n_{k+1}})\cdot \boldsymbol{n}_d\|_{L^2(\mathbb{I})}^2
+\frac{\gamma \Delta s}{\rho h} \|\boldsymbol{u}_f^{n_{M}} \cdot \boldsymbol{n}_d\|_{L^2(\mathbb{I})}^2
+\frac{2\mu \Delta s}{\rho k_f}\sum_{k=0}^{M-1}\|\boldsymbol{u}_f^{n_{k+1}}\|_0^2 \nonumber \\
&+\frac{\eta_m C_{mt}}{\rho}\bigg(\|\phi_m^{n_{M}}\|_0^2 + \sum_{k=0}^{M-1} \|\phi_m^{n_{k+1}}-\phi_m^{n_k}\|_0^2 \bigg)
+\frac{\eta_f C_{ft}}{\rho}\bigg(\|\phi_f^{n_{M}}\|_0^2 + \sum_{k=0}^{M-1} \|\phi_f^{n_{k+1}}-\phi_f^{n_k}\|_0^2 \bigg) \nonumber \\
&+\frac{\sigma k_m \Delta s}{\rho \mu}\bigg( \|\phi_m^{n_M}\|_0^2 + \sum_{k=0}^{M-1} \|\phi_m^{n_{k+1}}-\phi_f^{n_k}\|_0^2
+\|\phi_f^{n_M}\|_0^2
+\sum_{k=0}^{M-1} \|\phi_f^{n_{k+1}}-\phi_m^{n_k}\|_0^2 \bigg) \\
&\leq  C \bigg( \frac{C_{PF}^2 \Delta t}{\nu}\sum_{k=0}^{M-1} \sum_{n=n_k}^{n_{k+1}-1}\|\boldsymbol{f}_c(t_{n+1})\|_{H^{-1}}^2
+ \frac{2\Delta t}{\rho \eta_f C_{ft}}\sum_{k=0}^{M-1}\sum_{n=n_k}^{n_{k+1}-1}\|f_d(t_{n_{k+1}})\|_0^2
+ \frac{\gamma \Delta s}{\rho h} \|\boldsymbol{u}_f^0 \cdot \boldsymbol{n}_d\|_{L^2(\mathbb{I})}^2 \nonumber\\
&+ \|\boldsymbol{u}_c^0\|_0^2
+ \frac{\eta_m C_{mt}}{\rho}\|\phi_m^0\|_0^2
+ \frac{\eta_f C_{ft}}{\rho} \|\phi_f^0\|_0^2
+ \frac{\sigma k_m \Delta s}{\rho \mu} (\|\phi_m^0\|_0^2 + \|\phi_f^0\|_0^2 ) \bigg). \nonumber
\end{align}
\end{theorem}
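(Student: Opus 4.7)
The plan is to prove the two estimates by the energy method in two separate stages, unified by the algebraic identity $2(a-b,a) = \|a\|_0^2 - \|b\|_0^2 + \|a-b\|_0^2$ (applied to the discrete time derivatives) and its scalar analogue $2a(a-b) = a^2 + (a-b)^2 - b^2$ (applied to the Nitsche penalty on $\mathbb{I}$).

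For the first bound on $[0,t_{n_1}]$, only Step~1 is active and the porous-media data remains frozen at $\phi_f^0, \boldsymbol{u}_f^0$. I test \eqref{Step1} with $\boldsymbol{v}_c^h = 2\Delta t\,\boldsymbol{u}_c^{n+1}$ and $q^h = 2\Delta t\,p_c^{n+1}$, so the pressure terms cancel. The characteristic contribution $2(\boldsymbol{u}_c^{n+1}-\hat{\boldsymbol{u}}_c^n,\boldsymbol{u}_c^{n+1})$ is converted to telescoping energy via the identity above, and Lemma~\ref{LEM} upgrades $\|\hat{\boldsymbol{u}}_c^n\|_0^2$ to $(1+\hat{C}\Delta t)\|\boldsymbol{u}_c^n\|_0^2$. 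The form $a_{\Omega_c}$ yields $2\Delta t\nu\|\nabla\boldsymbol{u}_c^{n+1}\|_0^2$ and the BJS tangential norm; the $\phi_f^0$ interface term is bounded by the trace inequality $A_2$, the inverse trace inequality $B_2$, and Young's inequality, absorbing a fraction of the dissipative term via the Poincar\'e-Friedrichs inequality $A_1$; the body force is treated similarly. The stabilization term, after applying $2a(a-b)=a^2+(a-b)^2-b^2$ with $a=\boldsymbol{u}_c^{n+1}\cdot\boldsymbol{n}_d$ and $b=\boldsymbol{u}_f^0\cdot\boldsymbol{n}_d$, produces exactly the two positive interface norms in the claimed bound together with the $\|\boldsymbol{u}_f^0\cdot\boldsymbol{n}_d\|_{L^2(\mathbb{I})}^2$ data term on the right. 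Summing over $n=0,\dots,J$ and applying the discrete Gronwall lemma under $\hat{C}\Delta t<1$ finishes this stage.

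For the full-interval bound I test Step~1 with $\boldsymbol{v}_c^h=2\Delta t\,\boldsymbol{u}_c^{n+1}$, $q^h=2\Delta t\,p_c^{n+1}$ and sum over $n=n_k,\dots,n_{k+1}-1$; Step~2 with $\psi_m^h=2\Delta s\,\phi_m^{n_{k+1}}$, $\boldsymbol{v}_m^h=2\Delta s\,\boldsymbol{u}_m^{n_{k+1}}$; and Step~3 with $\psi_f^h=2\Delta s\,\phi_f^{n_{k+1}}$, $\boldsymbol{v}_f^h=2\Delta s\,\boldsymbol{u}_f^{n_{k+1}}$. Adding all three, the pressure-divergence pairings cancel against $b_{\phi_d}$; the $a_{\phi_d}$ coupling, via $2a(a-b)$, yields the exchange norms $\|\phi_m^{n_{k+1}}-\phi_f^{n_k}\|_0^2$ and $\|\phi_f^{n_{k+1}}-\phi_m^{n_k}\|_0^2$ with telescoping of $\|\phi_m\|_0^2+\|\phi_f\|_0^2$; the discrete time derivatives telescope to $\|\boldsymbol{u}_c^{n_M}\|_0^2$, $\|\phi_m^{n_M}\|_0^2$, $\|\phi_f^{n_M}\|_0^2$; and $a_{\boldsymbol{u}_d}$ furnishes $\|\boldsymbol{u}_f^{n_{k+1}}\|_0^2$, $\|\boldsymbol{u}_m^{n_{k+1}}\|_0^2$. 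Only the interface pairings involving $\phi_f^{n_k}$ and the two Nitsche penalties remain to be reconciled.

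The main obstacle is managing these remaining interface terms. Using $2\Delta t\sum_{n=n_k}^{n_{k+1}-1}\boldsymbol{u}_c^{n+1}=2\Delta s\,S^{n_{k+1}}$, the $\phi_f^{n_k}$ pairings from Step~1 and Step~3 collapse to $\frac{2\Delta s}{\rho}\int_{\mathbb{I}}\phi_f^{n_k}(\boldsymbol{u}_f^{n_{k+1}}-S^{n_{k+1}})\cdot\boldsymbol{n}_d\,ds$. The Step~1 and Step~3 stabilizations, expanded by $2a(a-b)=a^2+(a-b)^2-b^2$, produce the positive $\frac{\gamma\Delta t}{\rho h}\sum_n\|(\boldsymbol{u}_c^{n+1}-\boldsymbol{u}_f^{n_k})\cdot\boldsymbol{n}_d\|_{L^2(\mathbb{I})}^2$ and $\frac{\gamma\Delta s}{\rho h}\|(S^{n_{k+1}}-\boldsymbol{u}_f^{n_{k+1}})\cdot\boldsymbol{n}_d\|_{L^2(\mathbb{I})}^2$, the telescoping $\frac{\gamma\Delta s}{\rho h}(\|\boldsymbol{u}_f^{n_{k+1}}\cdot\boldsymbol{n}_d\|_{L^2(\mathbb{I})}^2-\|\boldsymbol{u}_f^{n_k}\cdot\boldsymbol{n}_d\|_{L^2(\mathbb{I})}^2)$, and a negative $-\frac{\gamma\Delta s}{\rho h}\|S^{n_{k+1}}\cdot\boldsymbol{n}_d\|_{L^2(\mathbb{I})}^2$ that is absorbed into $\frac{\gamma\Delta t}{\rho h}\sum_n\|\boldsymbol{u}_c^{n+1}\cdot\boldsymbol{n}_d\|_{L^2(\mathbb{I})}^2$ by Jensen's inequality applied to the average $S^{n_{k+1}}$. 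The residual cross term is split via $\phi_f^{n_k}=\phi_f^{n_{k+1}}-(\phi_f^{n_{k+1}}-\phi_f^{n_k})$, bounded by $B_2$ and Young's inequality, and absorbed partly into the positive stabilization $\frac{\gamma\Delta s}{\rho h}\|(S^{n_{k+1}}-\boldsymbol{u}_f^{n_{k+1}})\cdot\boldsymbol{n}_d\|_{L^2(\mathbb{I})}^2$ and partly into the Step~3 telescoping of $\|\phi_f\|_0^2$; this absorption is precisely what forces the lower bound $\gamma\ge\frac{4C_{inv}^2}{\eta_f C_{ft}}\bigl(\Delta t/(1-\Delta t-2\hat{C}\Delta t)\bigr)$. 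Summing over $k=0,\dots,M-1$ and invoking the discrete Gronwall lemma concludes the estimate.
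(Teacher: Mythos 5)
Your proposal is correct and, at its core, follows the same energy argument as the paper: the same test functions ($2\Delta t\,\boldsymbol{u}_c^{n+1}$, $2\Delta t\,p_c^{n+1}$ in Step 1; $2\Delta s$ times the new iterates in Steps 2 and 3), Lemma \ref{LEM} for the characteristic term, the trace-inverse inequality plus Young to handle the $\phi_f^{n_k}$ interface coupling, and the discrete Gronwall lemma whose smallness requirement ($\kappa\Delta t\le\tfrac12$ with $\kappa$ containing $\tfrac{\eta_f C_{ft}}{2\rho}$, $\tfrac{2\tilde C_{inv}^2}{\rho\gamma}$ and $\hat C$) is exactly what produces the stated lower bound on $\gamma$; the first-interval estimate is identical to the paper's (the paper proves it for general $n_k$ and sets $k=0$). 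Where you genuinely deviate is the algebra for the two Nitsche penalties on the long interval: the paper first rewrites the Step 3 penalty with $S^{n_{k+1}}$ as a sum over $n$, combines it with the Step 1 penalty, and performs an identity transformation that directly yields $-\|(\boldsymbol{u}_c^{n+1}-\boldsymbol{u}_f^{n_{k+1}})\cdot\boldsymbol{n}_d\|_{L^2(\mathbb{I})}^2$ together with the telescoping of $\|\boldsymbol{u}_f\cdot\boldsymbol{n}_d\|_{L^2(\mathbb{I})}^2$, so the left-hand interface term of the theorem appears verbatim and the $\phi_f^{n_k}$ cross term is absorbed into it; you instead polarize each penalty separately ($2a(a-b)=a^2+(a-b)^2-b^2$) and remove the resulting $-\tfrac{\gamma\Delta s}{\rho h}\|S^{n_{k+1}}\cdot\boldsymbol{n}_d\|^2$ by Jensen against $\tfrac{\gamma\Delta t}{\rho h}\sum_n\|\boldsymbol{u}_c^{n+1}\cdot\boldsymbol{n}_d\|^2$, absorbing the cross term into $\tfrac{\gamma\Delta s}{\rho h}\|(S^{n_{k+1}}-\boldsymbol{u}_f^{n_{k+1}})\cdot\boldsymbol{n}_d\|^2$. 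This is a cleaner, more modular bookkeeping and yields the same Gronwall structure and the same $\gamma$ condition, but it leaves on the left the lagged and averaged variants $\|(\boldsymbol{u}_c^{n+1}-\boldsymbol{u}_f^{n_k})\cdot\boldsymbol{n}_d\|^2$ and $\|(S^{n_{k+1}}-\boldsymbol{u}_f^{n_{k+1}})\cdot\boldsymbol{n}_d\|^2$ rather than the theorem's $\tfrac{\gamma\Delta t}{2\rho h}\sum_n\|(\boldsymbol{u}_c^{n+1}-\boldsymbol{u}_f^{n_{k+1}})\cdot\boldsymbol{n}_d\|^2$; to recover that literal term you need one extra step, namely the mean--variance identity $\Delta t\sum_{n=n_k}^{n_{k+1}-1}\|(\boldsymbol{u}_c^{n+1}-S^{n_{k+1}})\cdot\boldsymbol{n}_d\|_{L^2(\mathbb{I})}^2=\Delta t\sum_{n=n_k}^{n_{k+1}-1}\|\boldsymbol{u}_c^{n+1}\cdot\boldsymbol{n}_d\|_{L^2(\mathbb{I})}^2-\Delta s\,\|S^{n_{k+1}}\cdot\boldsymbol{n}_d\|_{L^2(\mathbb{I})}^2$ (the quantity your Jensen step leaves over) combined with the triangle inequality $\|(\boldsymbol{u}_c^{n+1}-\boldsymbol{u}_f^{n_{k+1}})\cdot\boldsymbol{n}_d\|^2\le 2\|(\boldsymbol{u}_c^{n+1}-S^{n_{k+1}})\cdot\boldsymbol{n}_d\|^2+2\|(S^{n_{k+1}}-\boldsymbol{u}_f^{n_{k+1}})\cdot\boldsymbol{n}_d\|^2$, at the cost of a slightly smaller constant; with that addition your argument delivers the theorem in full.
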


~\\
\begin{proof}
Taking~$\boldsymbol{v}_c^h=2 \Delta t \boldsymbol{u}_c^{n+1}$~and~$q^h=2 \Delta t p_c^{n+1}$~in~\eqref{Step1}, sum over~$n=n_k,...,n_{k+1}-1$,
\begin{equation}\label{WStep1}
\begin{split}
&2\nu \Delta t \sum_{n=n_k}^{n_{k+1}-1}\|\nabla \boldsymbol{u}_c^{n+1}\|_0^2
+\frac{2\alpha \nu \Delta t}{\sqrt{k}}\|P_{\tau}(\boldsymbol{u}_c^{n+1})\|_{L^2(\mathbb{I})}^2\\
&=2 \Delta t \sum_{n=n_k}^{n_{k+1}-1} (\boldsymbol{f}_c(t_{n+1}),\boldsymbol{u}_c^{n+1})_{\Omega_c}
-2\Delta t \sum_{n=n_k}^{n_{k+1}-1} \bigg( \frac{\boldsymbol{u}_c^{n+1}-\hat{\boldsymbol{u}}_c^n}{\Delta t},\boldsymbol{u}_c^{n+1} \bigg)_{\Omega_c}\\
&+\frac{2 \Delta t}{\rho} \sum_{n=n_k}^{n_{k+1}-1} \int_{\mathbb{I}} \phi_f^{n_k} \boldsymbol{u}_c^{n+1} \cdot \boldsymbol{n}_d \mathrm{d}s
-\frac{2\gamma \Delta t}{\rho h} \sum_{n=n_k}^{n_{k+1}-1} \int_{\mathbb{I}} ((\boldsymbol{u}_c^{n+1}-\boldsymbol{u}_f^{n_k}) \cdot \boldsymbol{n}_d)(\boldsymbol{u}_c^{n+1} \cdot \boldsymbol{n}_d) \mathrm{d}s.
\end{split}
\end{equation}
Taking~$\boldsymbol{v}_m^h=2\Delta s \boldsymbol{u}_m^{n_{k+1}}$~and~$\psi_m^h=2\Delta s \phi_m^{n_{k+1}}$~in \eqref{Step2}, we obtain
\begin{equation}\label{WStep2}
\begin{split}
&\frac{\eta_m C_{mt}}{\rho} (\|\phi_m^{n_{k+1}}\|_0^2-\|\phi_m^{n_k}\|_0^2 + \|\phi_m^{n_{k+1}}-\phi_m^{n_k}\|_0^2)
+\frac{2 \mu \Delta s}{\rho k_m} \|\boldsymbol{u}_m^{n_{k+1}}\|_0^2\\
&+\frac{\sigma k_m \Delta s}{\rho \mu}(\|\phi_m^{n_{k+1}}\|_0^2 - \|\phi_f^{n_k}\|_0^2 + \|\phi_m^{n_{k+1}}-\phi_f^{n_k}\|_0^2)
=0.
\end{split}
\end{equation}
Taking~$\boldsymbol{v}_f^h=2\Delta s \boldsymbol{u}_f^{n_{k+1}}$~and~$\psi_f^h=2 \Delta s \phi_f^{n_{k+1}}$~in~\eqref{Step3}, we have
\begin{equation}\label{WStep3}
\begin{split}
&\frac{\eta_f C_{ft}}{\rho}(\|\phi_f^{n_{k+1}}\|_0^2 -\|\phi_f^{n_k}\|_0^2 + \|\phi_f^{n_{k+1}}-\phi_f^{n_k}\|_0^2)
+\frac{2\mu \Delta s}{\rho k_f} \|\boldsymbol{u}_f^{n_{k+1}}\|_0^2\\
&+ \frac{\sigma k_m \Delta s}{\rho \mu}(\|\phi_f^{n_{k+1}}\|_0^2 - \|\phi_m^{n_k}\|_0^2 + \|\phi_f^{n_{k+1}}-\phi_m^{n_k}\|_0^2)
+ \frac{2\Delta s}{\rho} \int_{\mathbb{I}} \phi_f^{n_k} \boldsymbol{u}_f^{n_{k+1}} \cdot \boldsymbol{n}_d \mathrm{d}s\\
&-\frac{2\gamma \Delta t}{\rho h}\sum_{n=n_k}^{n_{k+1}-1}\int_{\mathbb{I}}((\boldsymbol{u}_c^{n+1}-\boldsymbol{u}_f^{n_{k+1}})\cdot \boldsymbol{n}_d)(\boldsymbol{u}_f^{n_{k+1}} \cdot \boldsymbol{n}_d) \mathrm{d}s
=\frac{2\Delta s}{\rho}(f_d(t_{n_{k+1}}),\phi_f^{n_{k+1}})_{\Omega_d}.
\end{split}
\end{equation}
Combining~\eqref{WStep1}~\eqref{WStep2}~and~\eqref{WStep3}, we obtain
\begin{align}
&2\nu \Delta t \sum_{n=n_k}^{n_{k+1}-1}\|\nabla \boldsymbol{u}_c^{n+1}\|_0^2
+\frac{2\alpha \nu \Delta t}{\sqrt{k}}\|P_{\tau}(\boldsymbol{u}_c^{n+1})\|_{L^2(\mathbb{I})}^2
+ \frac{\eta_m C_{mt}}{\rho} (\|\phi_m^{n_{k+1}}\|_0^2-\|\phi_m^{n_k}\|_0^2 + \|\phi_m^{n_{k+1}}-\phi_m^{n_k}\|_0^2) \nonumber\\
&+ \frac{\eta_f C_{ft}}{\rho}(\|\phi_f^{n_{k+1}}\|_0^2 -\|\phi_f^{n_k}\|_0^2 + \|\phi_f^{n_{k+1}}-\phi_f^{n_k}\|_0^2)
+\frac{2\mu \Delta s}{\rho k_f} \|\boldsymbol{u}_f^{n_{k+1}}\|_0^2
+\frac{2 \mu \Delta s}{\rho k_m} \|\boldsymbol{u}_m^{n_{k+1}}\|_0^2 \nonumber\\
&+ \frac{\sigma k_m \Delta s}{\rho \mu}(\|\phi_m^{n_{k+1}}\|_0^2 - \|\phi_f^{n_k}\|_0^2
+ \|\phi_m^{n_{k+1}}-\phi_f^{n_k}\|_0^2
+ \|\phi_f^{n_{k+1}}\|_0^2 - \|\phi_m^{n_k}\|_0^2 + \|\phi_f^{n_{k+1}}-\phi_m^{n_k}\|_0^2) \nonumber\\
&= \bigg[ 2 \Delta t \sum_{n=n_k}^{n_{k+1}-1} (\boldsymbol{f}_c(t_{n+1}),\boldsymbol{u}_c^{n+1})_{\Omega_c}
+ \frac{2\Delta s}{\rho}(f_d(t_{n_{k+1}}),\phi_f^{n_{k+1}})_{\Omega_d} \bigg]
-2\Delta t \sum_{n=n_k}^{n_{k+1}-1} \bigg( \frac{\boldsymbol{u}_c^{n+1}-\hat{\boldsymbol{u}}_c^n}{\Delta t},\boldsymbol{u}_c^{n+1} \bigg)_{\Omega_c} \nonumber\\
&+ \frac{2 \Delta t}{\rho} \sum_{n=n_k}^{n_{k+1}-1} \int_{\mathbb{I}} \phi_f^{n_k} (\boldsymbol{u}_c^{n+1} -\boldsymbol{u}_f^{n_{k+1}})\cdot \boldsymbol{n}_d \mathrm{d}s
+ \bigg[ \frac{2\gamma \Delta t}{\rho h}\sum_{n=n_k}^{n_{k+1}-1}\int_{\mathbb{I}}((\boldsymbol{u}_c^{n+1}-\boldsymbol{u}_f^{n_{k+1}})\cdot \boldsymbol{n}_d)(\boldsymbol{u}_f^{n_{k+1}} \cdot \boldsymbol{n}_d) \mathrm{d}s \nonumber\\
&- \frac{2\gamma \Delta t}{\rho h} \sum_{n=n_k}^{n_{k+1}-1} \int_{\mathbb{I}} ((\boldsymbol{u}_c^{n+1}-\boldsymbol{u}_f^{n_k}) \cdot \boldsymbol{n}_d)(\boldsymbol{u}_c^{n+1} \cdot \boldsymbol{n}_d) \mathrm{d}s \bigg] \nonumber\\
&:= \sum_{i=1}^4 I_i. \label{Sum}
\end{align}
For~$I_1$, we can split it into the following form.
\begin{align*}
I_1&=2\Delta t \sum_{n=n_k}^{n_{k+1}-1}(\boldsymbol{f}_c(t_{n+1}),\boldsymbol{u}_c^{n+1})_{\Omega_c}
+\frac{2\Delta s}{\rho}(f_d(t_{n_{k+1}}),\phi_f^{n_{k+1}})_{\Omega_d}\\
&:= I_{11} + I_{12}.
\end{align*}
The two terms are bounded by H$\mathrm{\ddot{o}}$lder inequality, Poincar$\mathrm{\acute{e}}$-Friedriches~inequality and Young inequality,
\begin{align*}
I_{11} &\leq 2 C_{PF}\Delta t\sum_{n=n_k}^{n_{k+1}-1} \|\boldsymbol{f}_c(t_{n+1})\|_{H^{-1}} \|\nabla \boldsymbol{u}_c^{n+1}\|_0\\
&\leq \frac{C_{PF}^2 \Delta t}{\nu}\sum_{n=n_k}^{n_{k+1}-1}\|\boldsymbol{f}_c(t_{n+1})\|_{H^{-1}}^2
+ \nu \Delta t \sum_{n=n_k}^{n_{k+1}-1} \|\nabla \boldsymbol{u}_c^{n+1}\|_0^2,
\end{align*}
and
\begin{align*}
I_{12} &\leq \frac{2\Delta s}{\rho \eta_f C_{ft} } \|f_d(t_{n_{k+1}})\|_0^2
+ \frac{\eta_f C_{ft} \Delta s}{2 \rho} \|\phi_f^{n_{k+1}}\|_0^2.
\end{align*}
For the first term on the right-hand side in~\eqref{Sum},
\begin{equation*}
I_1 \leq \nu \Delta t \sum_{n=n_k}^{n_{k+1}-1}\|\nabla \boldsymbol{u}_c^{n+1}\|_0^2
+ \frac{C_{PF}^2 \Delta t}{\nu}\sum_{n=n_k}^{n_{k+1}-1}\|\boldsymbol{f}_c(t_{n+1})\|_{H^{-1}}^2
+ \frac{2\Delta s}{\rho \eta_f C_{ft} } \|f_d(t_{n_{k+1}})\|_0^2
+ \frac{\eta_f C_{ft} \Delta s}{2 \rho} \|\phi_f^{n_{k+1}}\|_0^2.
\end{equation*}
Applying the lemma~\ref{LEM}, the term
\begin{equation*}
\begin{split}
& -2\Delta t \bigg(\frac{\boldsymbol{u}_c^{n+1}-\hat{\boldsymbol{u}}_c^n}{\Delta t},\boldsymbol{u}_c^{n+1}\bigg)_{\Omega_c}\\
&= -(\boldsymbol{u}_c^{n+1}-\hat{\boldsymbol{u}}_c^n,\boldsymbol{u}_c^{n+1}+\hat{\boldsymbol{u}}_c^n
+\boldsymbol{u}_c^{n+1}-\hat{\boldsymbol{u}}_c^n)_{\Omega_c}\\
&\leq -\bigg[ (\boldsymbol{u}_c^{n+1},\boldsymbol{u}_c^{n+1})_{\Omega_c}- (\hat{\boldsymbol{u}}_c^n,\hat{\boldsymbol{u}}_c^n)_{\Omega_c} \bigg]\\
&= -\bigg[ \|\boldsymbol{u}_c^{n+1}\|_0^2 - \|\boldsymbol{u}_c^{n}\|_0^2
+ (\boldsymbol{u}_c^n,\boldsymbol{u}_c^n)_{\Omega_c}-(\hat{\boldsymbol{u}}_c^n,\hat{\boldsymbol{u}}_c^n)_{\Omega_c} \bigg]\\
&\leq -\bigg[ \|\boldsymbol{u}_c^{n+1}\|_0^2 - \|\boldsymbol{u}_c^{n}\|_0^2 \bigg]
+ \hat{C} \Delta t \|\boldsymbol{u}_c^n\|_0^2.
\end{split}
\end{equation*}
The second term on the right-hand in~\eqref{Sum}~is bounded by
\begin{equation*}
\begin{split}
I_2 &\leq -\sum_{n=n_k}^{n_{k+1}-1} \bigg[ \|\boldsymbol{u}_c^{n+1}\|_0^2 - \|\boldsymbol{u}_c^{n}\|_0^2 \bigg]
+ \hat{C} \Delta t \sum_{n=n_k}^{n_{k+1}-1} \|\boldsymbol{u}_c^n\|_0^2.
\end{split}
\end{equation*}
Using the Cauchy-Schwarz inequality, trace inverse inequality and the Young inequality, we show that
\begin{equation*}
\begin{split}
I_3 \leq \frac{2 \tilde{C}_{inv}^2 \Delta s}{\rho \gamma} \|\phi_f^{n_k}\|_0^2
+\frac{\gamma \Delta t}{2 \rho h}\sum_{n=n_k}^{n_{k+1}-1} \|(\boldsymbol{u}_c^{n+1}-\boldsymbol{u}_f^{n_{k+1}})\cdot \boldsymbol{n}_d\|_{L^2(\mathbb{I})}^2.
\end{split}
\end{equation*}
For~$I_4$, the identity transformation is carried out.
\begin{align}
I_4 &= \frac{2\gamma \Delta t}{\rho h}\sum_{n=n_k}^{n_{k+1}-1}\int_{\mathbb{I}}((\boldsymbol{u}_c^{n+1}-\boldsymbol{u}_f^{n_{k+1}})\cdot \boldsymbol{n}_d)(\boldsymbol{u}_f^{n_{k+1}} \cdot \boldsymbol{n}_d) \mathrm{d}s \nonumber\\
&~~~~- \frac{2\gamma \Delta t}{\rho h} \sum_{n=n_k}^{n_{k+1}-1} \int_{\mathbb{I}} ((\boldsymbol{u}_c^{n+1}-\boldsymbol{u}_f^{n_k}) \cdot \boldsymbol{n}_d)(\boldsymbol{u}_c^{n+1} \cdot \boldsymbol{n}_d) \mathrm{d}s \nonumber\\
&= -\frac{2\gamma \Delta t}{\rho h} \sum_{n=n_k}^{n_{k+1}-1} \|(\boldsymbol{u}_c^{n+1} - \boldsymbol{u}_f^{n_{k+1}}) \cdot \boldsymbol{n}_d\|_{L^2(\mathbb{I})}^2
-\frac{2\gamma \Delta t}{\rho h}\sum_{n=n_k}^{n_{k+1}-1}\int_{\mathbb{I}} ((\boldsymbol{u}_f^{n_{k+1}}-\boldsymbol{u}_f^{n_k}) \cdot \boldsymbol{n}_d)(\boldsymbol{u}_c^{n+1} \cdot \boldsymbol{n}_d)\mathrm{d}s \nonumber\\
&=-\frac{2\gamma \Delta t}{\rho h} \sum_{n=n_k}^{n_{k+1}-1} \|(\boldsymbol{u}_c^{n+1}-\boldsymbol{u}_f^{n_{k+1}})\cdot \boldsymbol{n}_d\|_{L^2(\mathbb{I})}^2
-\frac{2\gamma \Delta t}{\rho h}\sum_{n=n_k}^{n_{k+1}-1} \bigg[ \int_{\mathbb{I}} ((\boldsymbol{u}_f^{n_{k+1}}-\boldsymbol{u}_f^{n_k}) \cdot \boldsymbol{n}_d)(\boldsymbol{u}_f^{n_{k+1}} \cdot \boldsymbol{n}_d)\mathrm{d}s \nonumber\\
&~~~~- \int_{\mathbb{I}} ((\boldsymbol{u}_f^{n_{k+1}}-\boldsymbol{u}_f^{n_k})\cdot \boldsymbol{n}_d)((\boldsymbol{u}_f^{n_{k+1}}-\boldsymbol{u}_c^{n+1})\cdot \boldsymbol{n}_d) \mathrm{d}s \bigg]. \nonumber
\end{align}
Since
\begin{equation*}
\begin{split}
&\int_{\mathbb{I}} ((\boldsymbol{u}_f^{n_{k+1}}-\boldsymbol{u}_f^{n_k})\cdot \boldsymbol{n}_d)((\boldsymbol{u}_f^{n_{k+1}}-\boldsymbol{u}_c^{n+1})\cdot \boldsymbol{n}_d) \mathrm{d}s\\
&\leq \frac{1}{2}\|(\boldsymbol{u}_f^{n_{k+1}}-\boldsymbol{u}_f^{n_k})\cdot \boldsymbol{n}_d\|_{L^2(\mathbb{I})}^2
+ \frac{1}{2}\|(\boldsymbol{u}_f^{n_{k+1}}-\boldsymbol{u}_c^{n+1})\cdot \boldsymbol{n}_d\|_{L^2(\mathbb{I})}^2\\
&\leq \frac{1}{2}\|\boldsymbol{u}_f^{n_{k+1}}\cdot \boldsymbol{n}_d\|_{L^2(\mathbb{I})}^2
+\frac{1}{2} \|\boldsymbol{u}_f^{n_k} \cdot \boldsymbol{n}_d\|_{L^2(\mathbb{I})}^2
-\int_{\mathbb{I}} \boldsymbol{u}_f^{n_k} \cdot \boldsymbol{n}_d \boldsymbol{u}_f^{n_{k+1}} \cdot \boldsymbol{n}_d \mathrm{d}s
+ \frac{1}{2} \|(\boldsymbol{u}_f^{n_{k+1}}-\boldsymbol{u}_c^{n+1})\cdot \boldsymbol{n}_d\|_{L^2(\mathbb{I})}^2,
\end{split}
\end{equation*}
then the fourth term~$I_4$~on the right-hand in~\eqref{Sum}~is yielded that
\begin{equation*}
\begin{split}
I_4 & \leq -\frac{2\gamma \Delta t}{\rho h} \sum_{n=n_k}^{n_{k+1}-1} \|(\boldsymbol{u}_c^{n+1}-\boldsymbol{u}_f^{n_{k+1}})\cdot \boldsymbol{n}_d\|_{L^2(\mathbb{I})}^2
- \frac{\gamma \Delta t}{\rho h} \sum_{n=n_k}^{n_{k+1}-1} \big(\|\boldsymbol{u}_f^{n_{k+1}}\cdot \boldsymbol{n}_d\|_{L^2(\mathbb{I})}^2 \\
&- \|\boldsymbol{u}_f^{n_k} \cdot \boldsymbol{n}_d\|_{L^2(\mathbb{I})}^2-\|(\boldsymbol{u}_f^{n_{k+1}}-\boldsymbol{u}_c^{n+1})\cdot \boldsymbol{n}_d\|_{L^2(\mathbb{I})}^2 \big).
\end{split}
\end{equation*}
Combining above inequalities, summing over~$k=0,...,M-1$~and using the discrete Growall lemma, we get the stability in the time interval~$[0,t_{n_M}]$
\begin{align*}
&\|\boldsymbol{u}_c^{n_{M}}\|_0^2 + \nu \Delta t \sum_{k=0}^{M-1}\sum_{n=n_k}^{n_{k+1}-1}\|\nabla \boldsymbol{u}_c^{n+1}\|_0^2
+\frac{2\alpha \nu \Delta t}{\sqrt{k}}\sum_{k=0}^{M-1} \sum_{n=n_k}^{n_{k+1}-1}\|P_{\tau} (\boldsymbol{u}_c^{n+1})\|_{L^2(\mathbb{I})}^2
+\frac{2\mu \Delta s}{\rho k_m}\sum_{k=0}^{M-1}\|\boldsymbol{u}_m^{n_{k+1}}\|_0^2\\
&+\frac{\gamma \Delta t}{2\rho h}\sum_{k=0}^{M-1} \sum_{n=n_k}^{n_{k+1}-1}\|(\boldsymbol{u}_c^{n+1}-\boldsymbol{u}_f^{n_{k+1}})\cdot \boldsymbol{n}_d\|_{L^2(\mathbb{I})}^2
+\frac{\gamma \Delta s}{\rho h} \|\boldsymbol{u}_f^{n_{M}} \cdot \boldsymbol{n}_d\|_{L^2(\mathbb{I})}^2
+\frac{2\mu \Delta s}{\rho k_f}\sum_{k=0}^{M-1}\|\boldsymbol{u}_f^{n_{k+1}}\|_0^2\\
&+\frac{\eta_m C_{mt}}{\rho}\bigg(\|\phi_m^{n_{M}}\|_0^2 + \sum_{k=0}^{M-1} \|\phi_m^{n_{k+1}}-\phi_m^{n_k}\|_0^2 \bigg)
+\frac{\eta_f C_{ft}}{\rho}\bigg(\|\phi_f^{n_{M}}\|_0^2 + \sum_{k=0}^{M-1} \|\phi_f^{n_{k+1}}-\phi_f^{n_k}\|_0^2 \bigg)\\
&+\frac{\sigma k_m \Delta s}{\rho \mu}\bigg( \|\phi_m^{n_M}\|_0^2 + \sum_{k=0}^{M-1} \|\phi_m^{n_{k+1}}-\phi_f^{n_k}\|_0^2
+\|\phi_f^{n_M}\|_0^2
+\sum_{k=0}^{M-1} \|\phi_f^{n_{k+1}}-\phi_m^{n_k}\|_0^2 \bigg)\\
&\leq  C \bigg( \frac{C_{PF}^2 \Delta t}{\nu}\sum_{k=0}^{M-1} \sum_{n=n_k}^{n_{k+1}-1}\|\boldsymbol{f}_c(t_{n+1})\|_{H^{-1}}^2
+ \frac{2\Delta t}{\rho \eta_f C_{ft}}\sum_{k=0}^{M-1}\sum_{n=n_k}^{n_{k+1}-1}\|f_d(t_{n_{k+1}})\|_0^2
+ \frac{\gamma \Delta s}{\rho h} \|\boldsymbol{u}_f^0 \cdot \boldsymbol{n}_d\|_{L^2(\mathbb{I})}^2\\
&+ \|\boldsymbol{u}_c^0\|_0^2
+ \frac{\eta_m C_{mt}}{\rho}\|\phi_m^0\|_0^2
+ \frac{\eta_f C_{ft}}{\rho} \|\phi_f^0\|_0^2
+ \frac{\sigma k_m \Delta s}{\rho \mu} (\|\phi_m^0\|_0^2 + \|\phi_f^0\|_0^2 ) \bigg).
\end{align*}
with the condition~$\bigg( \frac{\frac{\eta_f C_{ft}}{2\rho} + \frac{2 C_{inv}^2}{\rho \gamma}}{\frac{\eta_f C_{ft}}{\rho}} + \hat{C} \bigg) \Delta t \leq \frac{1}{2}$, which leads to
\begin{equation*}
\gamma \geq \frac{4 C_{inv}^2}{\eta_f C_{ft}}\bigg( \frac{\Delta t}{1-\Delta t -2 \hat{C} \Delta t }\bigg).
\end{equation*}

Taking~$\boldsymbol{v}_c^h=2 \Delta t \boldsymbol{u}_c^{n+1}$~and~$q^h=2 \Delta t p_c^{n+1}$~in~\eqref{Step1}, sum over~$n=n_k,...,n_{k}+J~(0\leq J \leq r-1)$,
\begin{align}
&2\nu \Delta t \sum_{n=n_k}^{n_{k}+J}\|\nabla \boldsymbol{u}_c^{n+1}\|_0^2
+\frac{2\alpha \nu \Delta t}{\sqrt{k}}\sum_{n=n_k}^{n_k+J}\|P_{\tau}(\boldsymbol{u}_c^{n+1})\|_{L^2(\mathbb{I})}^2 \nonumber \\
&=2 \Delta t \sum_{n=n_k}^{n_{k}+J} (\boldsymbol{f}_c(t_{n+1}),\boldsymbol{u}_c^{n+1})_{\Omega_c}
-2\Delta t \sum_{n=n_k}^{n_{k}+J} \bigg( \frac{\boldsymbol{u}_c^{n+1}-\hat{\boldsymbol{u}}_c^n}{\Delta t},\boldsymbol{u}_c^{n+1} \bigg)_{\Omega_c} \label{WStep11} \\
&+\frac{2 \Delta t}{\rho} \sum_{n=n_k}^{n_{k}+J} \int_{\mathbb{I}} \phi_f^{n_k} \boldsymbol{u}_c^{n+1} \cdot \boldsymbol{n}_d \mathrm{d}s
-\frac{2\gamma \Delta t}{\rho h} \sum_{n=n_k}^{n_{k}+J} \int_{\mathbb{I}} ((\boldsymbol{u}_c^{n+1}-\boldsymbol{u}_f^{n_k}) \cdot \boldsymbol{n}_d)(\boldsymbol{u}_c^{n+1} \cdot \boldsymbol{n}_d) \mathrm{d}s \nonumber \\
&:=\sum_{i=1}^4 II_i. \nonumber
\end{align}
Similarly, we obtain
\begin{equation*}
\begin{split}
II_1 &\leq \frac{2 C_{PF}^2 \Delta t}{\nu}\sum_{n=n_k}^{n_k+J}\|\boldsymbol{f}_c(t_{n+1})\|_{H^{-1}}^2
+\frac{\nu \Delta t}{2}\sum_{n=n_k}^{n_k+J}\|\nabla \boldsymbol{u}_c^{n+1}\|_0^2,\\
II_2 &\leq -\sum_{n=n_k}^{n_{k}+J} \bigg[ \|\boldsymbol{u}_c^{n+1}\|_0^2 - \|\boldsymbol{u}_c^{n}\|_0^2 \bigg]
+ \hat{C} \Delta t \sum_{n=n_k}^{n_{k}+J} \|\boldsymbol{u}_c^n\|_0^2,\\
II_3 &\leq \frac{C_T^2 \tilde{C}_{inv}^2 C_{PF}\Delta t}{\rho^2 \nu h}\sum_{n=n_k}^{n_{k}+J} \|\phi_f^{n_k}\|_0^2
+ \nu \Delta t \sum_{n=n_k}^{n_k+J} \|\nabla \boldsymbol{u}_c^{n+1}\|_0^2,\\
II_4 &= -\frac{\gamma \Delta t}{\rho h}\sum_{n=n_k}^{n_k+J}\|(\boldsymbol{u}_c^{n+1}-\boldsymbol{u}_f^{n_k})\cdot \boldsymbol{n}_d\|_{L^2(\mathbb{I})}^2
+\frac{\gamma \Delta t}{\rho h}\sum_{n=n_k}^{n_k+J} (\|\boldsymbol{u}_f^{n_k} \cdot \boldsymbol{n}_d\|_{L^2(\mathbb{I})}^2 - \|\boldsymbol{u}_c^{n+1} \cdot \boldsymbol{n}_d\|_{L^2(\mathbb{I})}^2).
\end{split}
\end{equation*}
Using the discrete~Growall~lemma, when~$\hat{C} \Delta t < 1$, then
\begin{align*}
&\|\boldsymbol{u}_c^{n_{k}+J+1}\|_0^2
+\frac{\nu \Delta t}{2} \sum_{n=n_k}^{n_{k}+J}\|\nabla \boldsymbol{u}_c^{n+1}\|_0^2
+\frac{2\alpha \nu \Delta t}{\sqrt{k}}\sum_{n=n_k}^{n_k+J}\|P_{\tau}(\boldsymbol{u}_c^{n+1})\|_{L^2(\mathbb{I})}^2\\
&+\frac{\gamma \Delta t}{\rho h}\sum_{n=n_k}^{n_k+J}\|(\boldsymbol{u}_c^{n+1}-\boldsymbol{u}_f^{n_k})\cdot \boldsymbol{n}_d\|_{L^2(\mathbb{I})}^2
+\frac{\gamma \Delta t}{\rho h}\sum_{n=n_k}^{n_k+J} \|\boldsymbol{u}_c^{n+1} \cdot \boldsymbol{n}_d\|_{L^2(\mathbb{I})}^2\\
&\leq C \bigg(\frac{2 C_{PF}^2 \Delta t}{\nu}\sum_{n=n_k}^{n_k+J}\|\boldsymbol{f}_c(t_{n+1})\|_{H^{-1}}^2
+\frac{C_T^2 \tilde{C}_{inv}^2 C_{PF} \Delta t}{\rho^2 \nu h}\sum_{n=n_k}^{n_{k}+J} \|\phi_f^{n_k}\|_0^2
+\frac{\gamma \Delta t}{\rho h}\sum_{n=n_k}^{n_k+J}\|\boldsymbol{u}_f^{n_k} \cdot \boldsymbol{n}_d\|_{L^2(\mathbb{I})}^2
+\|\boldsymbol{u}_c^{n_k}\|_0^2 \bigg).
\end{align*}
In particular, taking~$k=0$, we have the stability in first large time~$[0,t_{n_1}]$,
\begin{equation*}
\begin{split}
&\|\boldsymbol{u}_c^{J+1}\|_0^2
+\frac{\nu \Delta t}{2} \sum_{n=0}^{J}\|\nabla \boldsymbol{u}_c^{n+1}\|_0^2
+\frac{2\alpha \nu \Delta t}{\sqrt{k}}\sum_{n=0}^{J}\|P_{\tau}(\boldsymbol{u}_c^{n+1})\|_{L^2(\mathbb{I})}^2\\
&+\frac{\gamma \Delta t}{\rho h}\sum_{n=0}^{J}\|(\boldsymbol{u}_c^{n+1}-\boldsymbol{u}_f^{n_k})\cdot \boldsymbol{n}_d\|_{L^2(\mathbb{I})}^2
+\frac{\gamma \Delta t}{\rho h}\sum_{n=0}^{J} \|\boldsymbol{u}_c^{n+1} \cdot \boldsymbol{n}_d\|_{L^2(\mathbb{I})}^2\\
&\leq C \bigg( \frac{2 C_{PF}^2 \Delta t}{\nu}\sum_{n=0}^{J}\|\boldsymbol{f}_c(t_{n+1})\|_{H^{-1}}^2
+\frac{C_T^2 \tilde{C}_{inv}^2 C_{PF} \Delta t}{\rho^2 \nu h}\sum_{n=0}^{J} \|\phi_f^{0}\|_0^2
+\frac{\gamma \Delta t}{\rho h}\sum_{n=0}^{J}\|\boldsymbol{u}_f^{0} \cdot \boldsymbol{n}_d\|_{L^2(\mathbb{I})}^2
+\|\boldsymbol{u}_c^{0}\|_0^2 \bigg).
\end{split}
\end{equation*}

\end{proof}

\section{Error analysis}
\subsection{Preliminaries}
In this subsection, we will introduce a projection operator, some lemmas and techniques that need to be used.

\noindent \textbf{A. Some lemmas}\\
Define a bounded linear projection operator~$P^h$, for all~$t \in [0,T]$,
\begin{equation*}
\begin{split}
&P^h=(P_{c1}^h,P_{c2}^h;P_{f1}^h,P_{f2}^h;P_{m1}^h,P_{m2}^h): (\boldsymbol{u}_c(t),p_c(t);\boldsymbol{u}_f(t),\phi_f(t);\boldsymbol{u}_m(t),\phi_m(t)) \in \mathcal{X} \mapsto\\
&(P_{c1}^h \boldsymbol{u}_c(t),P_{c2}^h p_c(t); P_{f1}^h \boldsymbol{u}_f(t), P_{f2}^h \phi_f(t); P_{m1}^h \boldsymbol{u}_m(t), P_{m2}^h \phi_m(t)) \in \mathcal{X}_h,
\end{split}
\end{equation*}
satisfying
\begin{equation}\label{projection}
\begin{split}
&a_{\Omega_c}(\boldsymbol{u}_c-P^h \boldsymbol{u}_c,\boldsymbol{v}_c^h)
-b(\boldsymbol{v}_c^h,p_c-P^h p_c)
+b(\boldsymbol{u}_c-P^h \boldsymbol{u}_c,q^h)
+a_{\phi_d}(\phi_d-P^h \phi_d,\psi_d^h)\\
&+a_{\boldsymbol{u}_d}(\boldsymbol{u}_d-P^h \boldsymbol{u}_d,\boldsymbol{v}_d^h)s
-b_{\phi_d}(\boldsymbol{v}_d^h,\phi_d-P^h \phi_d)
+b_{\phi_d}(\boldsymbol{u}_d-P^h \boldsymbol{u}_d,\psi_d^h)\\
&-\frac{1}{\rho}\int_{\mathbb{I}}(\phi_f-P^h \phi_f)(\boldsymbol{v}_c^h-\boldsymbol{v}_f^h)\cdot \boldsymbol{n}_d \mathrm{d}s
+\frac{\gamma}{\rho h}\int_{\mathbb{I}} ((\boldsymbol{u}_c-P^h \boldsymbol{u}_c)-(\boldsymbol{u}_f-P^h \boldsymbol{u}_f))\cdot \boldsymbol{n}_d
(\boldsymbol{v}_c^h-\boldsymbol{v}_f^h)\cdot \boldsymbol{n}_d \mathrm{d}s\\
&=0, \forall (\boldsymbol{v}_c^h,q^h;\boldsymbol{v}_f^h,\psi_f^h;\boldsymbol{v}_m^h,\psi_m^h)\in (Y_c^h,Q_c^h,Y_f^h,Q_f^h,Y_m^h,Q_m^h).
\end{split}
\end{equation}

Suppose that the solution~$(\boldsymbol{u}_c(t),p_c(t);\boldsymbol{u}_f(t),\phi_f(t);\boldsymbol{u}_m(t),\phi_m(t))$~to the variational formulation~\eqref{yuanshi}~satisfy
\begin{equation}\label{suppose}
\begin{split}
&\|\boldsymbol{u}_c\|_{L^{\infty}(0,T;H^2)}+\|\boldsymbol{u}_c\|_{L^{\infty}(0,T;W^{2,d^{\ast}})}
+\|p_c\|_{L^{\infty}(0,T;H^1)}
+\|\boldsymbol{u}_f\|_{L^{\infty}(0,T;H^2)}+\|\phi_f\|_{L^{\infty}(0,T;H^1)}\\
&+\|\boldsymbol{u}_m\|_{L^{\infty}(0,T;H^2)}
+\|\phi_m\|_{L^{\infty}(0,T;H^1)}+\|\boldsymbol{u}_{ct}\|_{L^2(0,T;H^1)}+\|\boldsymbol{u}_{ft}\|_{L^2(0,T;H^1)}
+\|\phi_{ft}\|_{L^2(0,T;L^2)}\\
&+\|\phi_{ftt}\|_{L^2(0,T;L^2)}
+\|\phi_{mt}\|_{L^2(0,T;L^2)}+\|\phi_{mtt}\|_{L^2(0,T;L^2)}
\leq C_H,
\end{split}
\end{equation}
where~$d^{\ast} > D$.

And assume that the solutions satisfy the following approximation properties~\cite{Layton2002, Franco1991}
\begin{align}
\|\boldsymbol{u}_c - P^h \boldsymbol{u}_c\|_0 + h\|\nabla (\boldsymbol{u}_c-P^h \boldsymbol{u}_c)\|_0 + h\|p_c-P^h p_c\|_0 \leq C_{pc} h^2,\label{pro1}\\
\|\boldsymbol{u}_f - P^h \boldsymbol{u}_f\|_0 + h\|\nabla \cdot (\boldsymbol{u}_f - P^h \boldsymbol{u}_f)\|_0 + h \|\phi_f -P^h \phi_f\|_0 \leq C_{pf} h^2,\label{pro2}\\
\|\boldsymbol{u}_m - P^h \boldsymbol{u}_m\|_0 + h\|\nabla \cdot (\boldsymbol{u}_m - P^h \boldsymbol{u}_m)\|_0 + h \|\phi_m - P^h \phi_m\|_0 \leq C_{pm} h^2. \label{pro3}
\end{align}

Furthermore, we suppose the projection operator~$P^h$~satisfying
\begin{equation}\label{Assume}
\|P^h \boldsymbol{u}_c(t_n)\|_{L^{\infty}} \leq C \|\boldsymbol{u}_c(t_n)\|_2,~~~~
\|P^h \boldsymbol{u}_c(t_n)\|_{W^{1,\infty}} \leq C \|\boldsymbol{u}_c(t_n)\|_{W^{1,\infty}}.
\end{equation}

\begin{lemma}[\cite{suli1988convergence}]\label{Lem1}
Let~$R_{tr}^{n+1}=\frac{\boldsymbol{u}_c(t_{n+1})-\bar{\boldsymbol{u}}_c(t_n)}{\Delta t}
-\frac{\partial \boldsymbol{u}_c}{\partial t}(t_{n+1})-(\boldsymbol{u}_c(t_{n+1}) \cdot \nabla )\boldsymbol{u}_c(t_{n+1})$. It holds that
\begin{equation*}
\Delta t \sum_{n=0}^{N-1} \|R_{tr}^{n+1}\|_0^2 \leq C \Delta t^2.
\end{equation*}
\end{lemma}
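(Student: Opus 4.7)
The plan is to write $R_{tr}^{n+1}$ as a Taylor remainder along the frozen-velocity characteristic trajectory. For each fixed $x$, I introduce the auxiliary function
\[
\boldsymbol{w}(s) := \boldsymbol{u}_c\bigl(x - \boldsymbol{u}_c(t_n,x)(t_{n+1}-s),\,s\bigr),\qquad s\in[t_n,t_{n+1}],
\]
so that $\boldsymbol{w}(t_{n+1}) = \boldsymbol{u}_c(t_{n+1},x)$ and $\boldsymbol{w}(t_n) = \bar{\boldsymbol{u}}_c(t_n,x)$. Then the difference quotient entering $R_{tr}^{n+1}$ equals $(\boldsymbol{w}(t_{n+1})-\boldsymbol{w}(t_n))/\Delta t$. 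A direct differentiation gives $\boldsymbol{w}'(t_{n+1}) = \partial_t\boldsymbol{u}_c(t_{n+1},x) + \boldsymbol{u}_c(t_n,x)\cdot\nabla\boldsymbol{u}_c(t_{n+1},x)$, which differs from the continuous material derivative $\partial_t\boldsymbol{u}_c(t_{n+1})+(\boldsymbol{u}_c(t_{n+1})\cdot\nabla)\boldsymbol{u}_c(t_{n+1})$ only through the mismatch $\boldsymbol{u}_c(t_n)-\boldsymbol{u}_c(t_{n+1})$ in the convective slot.

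Next I apply Taylor's theorem with integral remainder at $s=t_{n+1}$ to $\boldsymbol{w}$, yielding
\[
\frac{\boldsymbol{w}(t_{n+1})-\boldsymbol{w}(t_n)}{\Delta t} = \boldsymbol{w}'(t_{n+1}) - \frac{1}{\Delta t}\int_{t_n}^{t_{n+1}}(s-t_n)\boldsymbol{w}''(s)\,ds.
\]
Combining with the identity above gives the clean splitting
\[
R_{tr}^{n+1} = \bigl[(\boldsymbol{u}_c(t_n)-\boldsymbol{u}_c(t_{n+1}))\cdot\nabla\bigr]\boldsymbol{u}_c(t_{n+1}) - \frac{1}{\Delta t}\int_{t_n}^{t_{n+1}}(s-t_n)\boldsymbol{w}''(s)\,ds =: E_1^{n+1} + E_2^{n+1}.
\]

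I would then bound each piece in $L^2(\Omega_c)$. For $E_1^{n+1}$, the Sobolev embedding $W^{2,d^\ast}\hookrightarrow W^{1,\infty}$ together with the regularity hypothesis \eqref{suppose} gives $\|\nabla\boldsymbol{u}_c(t_{n+1})\|_{L^\infty}\le C$, so that by the fundamental theorem of calculus in time and Cauchy--Schwarz
\[
\|E_1^{n+1}\|_0^2 \le C\,\Bigl\|\int_{t_n}^{t_{n+1}}\boldsymbol{u}_{ct}(s)\,ds\Bigr\|_0^{2}\le C\,\Delta t\!\int_{t_n}^{t_{n+1}}\|\boldsymbol{u}_{ct}(s)\|_0^{2}\,ds.
\]
For $E_2^{n+1}$, Cauchy--Schwarz in time gives $\|E_2^{n+1}\|_0^2\le C\,\Delta t\int_{t_n}^{t_{n+1}}\|\boldsymbol{w}''(s)\|_0^{2}\,ds$. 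Summing from $n=0$ to $N-1$ and multiplying by $\Delta t$ produces the target bound
\[
\Delta t\sum_{n=0}^{N-1}\|R_{tr}^{n+1}\|_0^{2} \le C\,\Delta t^{2}\Bigl(\|\boldsymbol{u}_{ct}\|_{L^2(0,T;L^2)}^{2} + \int_{0}^{T}\!\|\boldsymbol{w}''(s)\|_0^{2}\,ds\Bigr) \le C\Delta t^{2}.
\]

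The main obstacle, as I see it, is estimating $\|\boldsymbol{w}''(s)\|_0$ uniformly in $n$ and $s$. Since
\[
\boldsymbol{w}''(s) = \boldsymbol{u}_{ctt} + 2\,\boldsymbol{u}_c(t_n)\!\cdot\!\nabla\boldsymbol{u}_{ct} + \bigl(\boldsymbol{u}_c(t_n)\!\cdot\!\nabla\bigr)\bigl(\boldsymbol{u}_c(t_n)\!\cdot\!\nabla\bigr)\boldsymbol{u}_c,
\]
evaluated at $(s,X(s))$, it mixes second time derivatives, mixed space--time derivatives, and second spatial derivatives multiplied by $\|\boldsymbol{u}_c\|_{L^\infty}^{2}$. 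The bounded change of variables $X(s) = x - \boldsymbol{u}_c(t_n)(t_{n+1}-s)$ (whose Jacobian is $I + O(\Delta t)$ uniformly, hence invertible with controlled constant for $\Delta t$ small) lets me trade the integral in $x$ over $\Omega_c$ for an integral over a slightly perturbed domain, so the $L^2$ norm in space at fixed $s$ is controlled by $\|\boldsymbol{u}_{ctt}(s)\|_0 + \|\boldsymbol{u}_{ct}(s)\|_1\|\boldsymbol{u}_c\|_{L^\infty(0,T;L^\infty)} + \|\boldsymbol{u}_c(s)\|_{2}\|\boldsymbol{u}_c\|_{L^\infty(0,T;L^\infty)}^{2}$. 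All these quantities are $L^2$-in-time by \eqref{suppose} (with the understanding that the analogue of $\boldsymbol{u}_{ctt}\in L^2(0,T;L^2)$ is supplied via the standard higher-regularity framework used in \cite{suli1988convergence}), closing the argument.
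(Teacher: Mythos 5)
The paper itself never proves this lemma: it is quoted verbatim with a citation to \cite{suli1988convergence}, so there is no in-text argument to compare against. Your proof is, in outline, exactly the standard argument behind that citation — write the backward-difference-along-frozen-characteristics as a Taylor expansion at $t_{n+1}$ with integral remainder, split the truncation error into the convective-slot mismatch $E_1^{n+1}=[(\boldsymbol{u}_c(t_n)-\boldsymbol{u}_c(t_{n+1}))\cdot\nabla]\boldsymbol{u}_c(t_{n+1})$ plus the $\boldsymbol{w}''$ remainder, bound $E_1$ by $\|\nabla\boldsymbol{u}_c\|_{L^\infty}\|\int_{t_n}^{t_{n+1}}\boldsymbol{u}_{ct}\|_0$ and $E_2$ by Cauchy--Schwarz in time with a change of variables along the frozen trajectory. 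The decomposition and the $E_1$ estimate are correct and are fully covered by \eqref{suppose} (via $W^{2,d^\ast}\hookrightarrow W^{1,\infty}$ and $\boldsymbol{u}_{ct}\in L^2(0,T;H^1)$). Two caveats deserve to be stated explicitly rather than waved at: (i) the $E_2$ term genuinely requires the second (frozen) material derivative, in particular $\boldsymbol{u}_{ctt}\in L^2(0,T;L^2)$, which is \emph{not} among the hypotheses \eqref{suppose} of this paper — you correctly flag this, and indeed the lemma as invoked here silently imports that extra regularity from S\"uli's hypotheses, so your proof is only complete under that additional assumption; (ii) the change of variables $x\mapsto X(s)=x-\boldsymbol{u}_c(x,t_n)(t_{n+1}-s)$ needs not only an $O(\Delta t)$-perturbed Jacobian (fine, since $\|\nabla\boldsymbol{u}_c\|_{L^\infty}\le C$) but also that the feet of the characteristics remain in $\Omega_c$ (or that $\boldsymbol{u}_c$ is suitably extended), which is not automatic here because $\boldsymbol{u}_c$ does not vanish on the interface $\mathbb{I}$; this is the standard Lagrange--Galerkin boundary issue and should at least be acknowledged as an assumption. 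With those two points made explicit, your argument is a correct reconstruction of the cited result.
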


\begin{lemma}[\cite{Si2016Unconditional}]\label{Lem2}
Assume that~$g_1, g_2$~and~$\rho$~are three functions defined in~$\Omega$~and vanish on~$\partial \Omega$. If
\begin{equation*}
\Delta t (\|g_1\|_{W^{1,\infty}} + \|g_2\|_{W^{1,\infty}}) \leq \frac{1}{2},
\end{equation*}
then
\begin{equation*}
\begin{split}
\|\rho(x-g_1(x)\Delta t) - \rho(x-g_2(x)\Delta t)\|_{L^q} &\leq C_N \Delta t \|\rho\|_{W^{1,q_1}} \|g_1-g_2\|_{L^{q_2}},\\
\|\rho(x)-\rho(x-g_2(x)\Delta t)\|_{-1} &\leq C_n \Delta t \|\rho\|_0 \|g_2\|_{W^{1,4}},
\end{split}
\end{equation*}
where~$1/q_1 + 1/q_2 = 1/q$~and~$1 \leq q < \infty$.
\end{lemma}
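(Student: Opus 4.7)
The two estimates require rather different techniques. The first is a ``strong-form'' estimate exploiting that $\rho \in W^{1,q_1}$ and follows from the fundamental theorem of calculus applied to $\rho$ directly. The second has only $\rho \in L^2$ available, so all differentiation must be shifted onto the test function via duality together with a change of variables.

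\emph{First inequality.} I introduce the convex combination $g_\theta := \theta g_1 + (1-\theta) g_2$, which inherits $\|g_\theta\|_{W^{1,\infty}} \leq \|g_1\|_{W^{1,\infty}} + \|g_2\|_{W^{1,\infty}}$. The fundamental theorem of calculus in $\theta$ gives
\begin{equation*}
\rho(x - g_1(x)\Delta t) - \rho(x - g_2(x)\Delta t) = -\Delta t \int_0^1 \nabla \rho(x - g_\theta(x)\Delta t) \cdot (g_1(x) - g_2(x))\, d\theta.
\end{equation*}
Taking $L^q(\Omega)$-norm, using Minkowski's integral inequality in $\theta$, then H\"older's inequality in $x$ with conjugate pair $(q_1,q_2)$, and finally the change of variables $y = x - g_\theta(x)\Delta t$ yields the claim. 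The hypothesis $\Delta t \|g_\theta\|_{W^{1,\infty}} \leq 1/2$ makes $x \mapsto x - g_\theta(x)\Delta t$ a diffeomorphism of $\Omega$ onto itself: its Jacobian $I - \Delta t \nabla g_\theta$ is uniformly invertible with determinant bounded away from zero via a Neumann-series expansion, and $g_\theta$ vanishes on $\partial \Omega$ so the boundary is preserved. The Jacobian factor therefore contributes only a harmless constant.

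\emph{Second inequality.} By duality, $\|\rho - \rho(\cdot - g_2(\cdot) \Delta t)\|_{-1}$ equals the supremum of $|(\rho - \rho(\cdot - g_2(\cdot) \Delta t),\, \phi)|$ over $\phi \in H^1_0(\Omega)$ with $\|\phi\|_1 = 1$. Fix such $\phi$. In the term $\int \rho(x - g_2(x)\Delta t)\, \phi(x)\, dx$, I perform the change of variables $y = T(x) := x - g_2(x)\Delta t$ with inverse $\psi = T^{-1}$ and Jacobian factor $J(y) := |\det \nabla T(\psi(y))|^{-1}$, which produces the identity
\begin{equation*}
(\rho - \rho(\cdot - g_2(\cdot) \Delta t),\, \phi) = \int_\Omega \rho(y)\, \bigl(\phi(y) - \phi(\psi(y))\, J(y)\bigr)\, dy.
\end{equation*}
Cauchy--Schwarz then reduces the task to showing $\|\phi(\cdot) - \phi(\psi(\cdot))\, J(\cdot)\|_{L^2} \leq C \Delta t \|g_2\|_{W^{1,4}} \|\phi\|_1$. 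I split
\begin{equation*}
\phi(y) - \phi(\psi(y))\, J(y) = \bigl[\phi(y) - \phi(\psi(y))\bigr] + \phi(\psi(y))\,\bigl[1 - J(y)\bigr].
\end{equation*}
For the first summand, FTC on $\phi$ together with $|\psi(y) - y| = \Delta t\, |g_2(\psi(y))|$ and an $L^\infty \!\cdot\! L^2$ H\"older split produce the bound $C \Delta t \|g_2\|_{L^\infty} \|\nabla \phi\|_0$, where $\|g_2\|_{L^\infty} \leq C \|g_2\|_{W^{1,4}}$ by the Sobolev embedding $W^{1,4}(\Omega) \hookrightarrow L^\infty(\Omega)$ (valid in dimensions $D = 2, 3$). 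For the second summand, a Neumann-series expansion of $J$ yields $|1 - J(y)| \leq C \Delta t\, |\nabla g_2|(\psi(y))$, and H\"older with $L^4 \!\cdot\! L^4$ produces $C \Delta t\, \|\phi\|_{L^4} \|\nabla g_2\|_{L^4}$, where $\|\phi\|_{L^4} \leq C \|\phi\|_1$ by the Sobolev embedding $H^1 \hookrightarrow L^4$.

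\emph{Main obstacle.} The delicate part is the second inequality: since $\rho$ lies only in $L^2$, one cannot apply FTC to $\rho$ directly, and the change of variables is what transfers the increment onto the smoother factor $\phi$ and the Jacobian $J$. The non-trivial bookkeeping is to show that both pieces of the decomposition fall under a $\|g_2\|_{W^{1,4}}$ bound rather than the stronger (but naively expected) $\|g_2\|_{W^{1,\infty}}$; correctly pairing the Sobolev embeddings $W^{1,4} \hookrightarrow L^\infty$ (for the pointwise bound on $g_2$) and $H^1 \hookrightarrow L^4$ (for the $L^4$ control of $\phi$) is exactly what makes $W^{1,4}$ sufficient on the right-hand side.
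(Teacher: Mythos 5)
The paper does not prove this lemma at all: it is imported verbatim from the cited reference [Si, Unconditional...], so there is no in-paper argument to compare against. Your reconstruction is correct and is essentially the standard proof from the modified-characteristics literature: the first bound via the homotopy $g_\theta=\theta g_1+(1-\theta)g_2$, the fundamental theorem of calculus in $\theta$, H\"older with $1/q=1/q_1+1/q_2$, and the change of variables $y=x-g_\theta(x)\Delta t$ (legitimate because $\Delta t\|g_\theta\|_{W^{1,\infty}}\le 1/2$ keeps the Jacobian uniformly invertible and $g_\theta|_{\partial\Omega}=0$ preserves the domain); the second bound by duality against $\phi\in H_0^1$, transferring the increment onto $\phi$ and the Jacobian factor $J$, and pairing $W^{1,4}\hookrightarrow L^\infty$ with $H^1\hookrightarrow L^4$ to land on $\|g_2\|_{W^{1,4}}$. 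The only points worth tightening are routine: in the first piece of the duality argument the segment from $y$ to $\psi(y)$ may leave a non-convex $\Omega$, which you handle by extending $\phi\in H_0^1$ by zero; and the claim that $x\mapsto x-g_\theta(x)\Delta t$ is onto $\Omega$ (not merely a local diffeomorphism fixing the boundary) deserves one line of justification. Neither affects the validity of the argument.
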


For~$(\boldsymbol{v}_c,q;\boldsymbol{v}_f,\psi_f;\boldsymbol{v}_m,\psi_m) \in \mathcal{X}^h$, using the characteristic version of the variational formulation and the definition of projection operator~$P^h$, we have
\begin{equation}\label{S1}
\begin{split}
&(d_t \boldsymbol{u}_c(t_{n+1}),\boldsymbol{v}_c)_{\Omega_c}
+a_{\Omega_c}(P^h \boldsymbol{u}_c(t_{n+1}),\boldsymbol{v}_c)
-b(\boldsymbol{v}_c,P^h p_c(t_{n+1}))+b(P^h \boldsymbol{u}_c(t_{n+1}),q)\\
&-\frac{1}{\rho}\int_{\mathbb{I}}P^h \phi_f(t_{n+1})\boldsymbol{v}_c \cdot \boldsymbol{n}_d \mathrm{d}s
+\frac{\gamma}{\rho h}\int_{\mathbb{I}}(P^h \boldsymbol{u}_c(t_{n+1})-P^h \boldsymbol{u}_f(t_{n+1}))\cdot \boldsymbol{n}_d \boldsymbol{v}_c \cdot \boldsymbol{n}_d \mathrm{d}s\\
&=(\boldsymbol{f}_c(t_{n+1}),\boldsymbol{v}_c)_{\Omega_c}
-\bigg(\frac{\boldsymbol{u}_c(t_n)-\bar{\boldsymbol{u}}_c(t_n)}{\Delta t},\boldsymbol{v}_c\bigg)_{\Omega_c}
+(R_{tr}^{n+1},\boldsymbol{v}_c)_{\Omega_c},
\end{split}
\end{equation}

\begin{equation}\label{S2}
\begin{split}
&\frac{\eta_m C_{mt}}{\rho}(d_s \phi_m(t_{n_{k+1}}),\psi_m)_{\Omega_d}
+\frac{\sigma k_m}{\rho \mu}(P^h \phi_m(t_{n_{k+1}}) - P^h \phi_f(t_{n_{k+1}}),\psi_m)_{\Omega_d}\\
&+\frac{1}{\rho}(\mu k_m^{-1} P^h \boldsymbol{u}_m(t_{n_{k+1}}), \boldsymbol{v}_m)_{\Omega_d}
+\frac{1}{\rho}(\nabla \cdot P^h \boldsymbol{u}_m(t_{n_{k+1}}),\psi_m)_{\Omega_d}
-\frac{1}{\rho}(P^h \phi_m(t_{n_{k+1}}), \nabla \cdot \boldsymbol{v}_m)_{\Omega_d}\\
&=\frac{\eta_m C_{mt}}{\rho}\bigg(d_s \phi_m(t_{n_{k+1}})- \frac{\partial \phi_m}{\partial t}(t_{n_{k+1}}),\psi_m\bigg)_{\Omega_d},
\end{split}
\end{equation}
and
\begin{equation}\label{S3}
\begin{split}
&\frac{\eta_f C_{ft}}{\rho}(d_s \phi_f(t_{n_{k+1}}),\psi_f)_{\Omega_d}
+\frac{\sigma k_m}{\rho \mu}(P^h \phi_f(t_{n_{k+1}})-P^h \phi_m(t_{n_{k+1}}),\psi_f)_{\Omega_d}\\
&+\frac{1}{\rho}(\mu k_f^{-1} P^h \boldsymbol{u}_f(t_{n_{k+1}}),\boldsymbol{v}_f)_{\Omega_d}
+\frac{1}{\rho}(\nabla \cdot P^h \boldsymbol{u}_f(t_{n_{k+1}}),\psi_f)_{\Omega_d}
-\frac{1}{\rho}(P^h \phi_f(t_{n_{k+1}}),\nabla \cdot \boldsymbol{v}_f)_{\Omega_d}\\
&+\frac{1}{\rho}\int_{\mathbb{I}}P^h \phi_f(t_{n_{k+1}})\boldsymbol{v}_f \cdot \boldsymbol{n}_d \mathrm{d}s
-\frac{\gamma}{\rho h}\int_{\mathbb{I}}(P^h \boldsymbol{u}_c(t_{n+1})- P^h \boldsymbol{u}_f(t_{n+1}))\cdot \boldsymbol{n}_d \boldsymbol{v}_f \cdot \boldsymbol{n}_d \mathrm{d}s\\
&=\frac{1}{\rho}(f_{d}(t_{n_{k+1}}),\psi_f)_{\Omega_d}
+\frac{\eta_f C_{ft}}{\rho}\bigg(d_s \phi_f(t_{n_{k+1}})- \frac{\partial \phi_f}{\partial t}(t_{n_{k+1}}),\psi_f\bigg)_{\Omega_d}.
\end{split}
\end{equation}

\noindent \textbf{B. Some techniques}\\
For simplicity, we show some techniques in estimation. Let~$\Psi_{d,t}^{n+1}=\Psi_{d,t,1}^{n+1}+\Psi_{d,t,2}^{n+1}$, where~$\Psi_{d,t,1}^{n+1}=d_t \phi_d(t_{n+1})-\frac{\partial \phi_d}{\partial t}(t_{n+1})$~and~$\Psi_{d,t,2}^{n+1}= d_t(P^h \phi_d(t_{n+1})-\phi_d(t_{n+1})), d=f~\text{and}~m$. Let~$\Psi_{c,t}^{n+1}=d_t(P^h \boldsymbol{u}_c(t_{n+1})-\boldsymbol{u}_c(t_{n+1}))$. For
\begin{equation*}
\begin{split}
\Delta t \Psi_{f,t,1}^{n+1}&=\phi_f(t_{n+1})- \phi_f(t_n) - \Delta t \frac{\partial \phi_f}{\partial t}(t_{n+1})
=\int_{t_n}^{t_{n+1}} (t_{n+1}-t) \frac{\partial^2 \phi_f}{\partial t^2}(t) \mathrm{d}t,
\end{split}
\end{equation*}
then
\begin{equation*}
\begin{split}
\|\Psi_{f,t,1}^{n+1}\|_0^2 &\leq \frac{1}{\Delta t^2} \int_{\Omega_d} \{\int_{t_n}^{t_{n+1}} \bigg(\frac{\partial^2 \phi_f}{\partial t^2}(t)\bigg)^2 \mathrm{d}t \int_{t_n}^{t_{n+1}} (t_{n+1}-t)^2\mathrm{d}t\} \mathrm{d}x\\
&\leq \frac{\Delta t}{3}\int_{t_n}^{t_{n+1}} \bigg\|\frac{\partial^2 \phi_f}{\partial t^2} \bigg\|_0^2 \mathrm{d}t.
\end{split}
\end{equation*}
For
\begin{equation*}
\begin{split}
\Psi_{f,t,2}^{n+1} &=\frac{P^h \phi_f(t_{n+1})-P^h \phi_f(t_n)}{\Delta t}
-\frac{\phi_f(t_{n+1})- \phi_f(t_n)}{\Delta t}\\
&=(P^h - I)\frac{\phi_f(t_{n+1})- \phi_f(t_n)}{\Delta t}\\
&=\frac{1}{\Delta t}\int_{t_n}^{t_{n+1}} (P^h -I)\frac{\partial \phi_f}{\partial t}(t) \mathrm{d}t,
\end{split}
\end{equation*}
then
\begin{equation*}
\begin{split}
\|\Psi_{f,t,2}^{n+1}\|_0^2 &= \frac{1}{\Delta t^2} \int_{\Omega_d} \bigg( \int_{t_n}^{t_{n+1}} (P^h - I) \frac{\partial \phi_f}{\partial t}(t) \mathrm{d}t \bigg)^2 \mathrm{d}x\\
&\leq \frac{1}{\Delta t^2} \int_{\Omega_d} \{\int_{t_n}^{t_{n+1}} ((P^h - I) \frac{\partial \phi_f}{\partial t}(t))^2 \mathrm{d}t \int_{t_n}^{t_{n+1}} 1^2 \mathrm{d}t \} \mathrm{d}x\\
&\leq \frac{1}{\Delta t} \int_{t_n}^{t_{n+1}} \bigg\|(P^h - I) \frac{\partial \phi_f}{\partial t}(t)\bigg\|_0^2 \mathrm{d}t.
\end{split}
\end{equation*}
Similarly,
\begin{equation*}
\begin{split}
\|\Psi_{m,t,1}^{n+1}\|_0^2 &\leq \frac{\Delta t}{3}\int_{t_n}^{t_{n+1}} \|\frac{\partial^2 \phi_m}{\partial t^2}\|_0^2 \mathrm{d}t,\\
\|\Psi_{m,t,2}^{n+1}\|_0^2 &\leq \frac{1}{\Delta t}\int_{t_n}^{t_{n+1}} \|(P^h-I)\frac{\partial \phi_m}{\partial t}(t)\|_0^2 \mathrm{d}t,\\
\|\Psi_{c,t}^{n+1}\|_0^2 &\leq \frac{1}{\Delta t}\int_{t_n}^{t_{n+1}} \|(P^h-I)\frac{\partial \boldsymbol{u}_c}{\partial t}(t)\|_0^2 \mathrm{d}t.
\end{split}
\end{equation*}

\subsection{Convergence analysis}
Some notations are defineded by
\begin{equation*}
\begin{split}
e_c^n=P^h \boldsymbol{u}_c(t_n) -\boldsymbol{u}_c^n;~~~~
\delta_c^n=P^h p_c(t_n) - p_c^n;~~~~
e_d^n=P^h \boldsymbol{u}_d(t_n)-\boldsymbol{u}_d^n;~~~~
\theta_d^n=P^h \phi_d(t_n)-\phi_d^n.
\end{split}
\end{equation*}
Then
\begin{equation*}
\begin{split}
\boldsymbol{u}_c(t_n)-\boldsymbol{u}_c^n:= \boldsymbol{u}_c(t_n)-P^h \boldsymbol{u}_c(t_n) + e_c^n,\\
p_c(t_n)-p_c^n := p_c(t_n)-P^h p_c(t_n) + \delta_c^n,\\
\boldsymbol{u}_d(t_n)-\boldsymbol{u}_d^n:=\boldsymbol{u}_d(t_n)-P^h \boldsymbol{u}_d(t_n) + e_d^n,\\
\phi_d(t_n)-\phi_d^n := \phi_d(t_n)-P^h \phi_d(t_n)+ \theta_d^n,
\end{split}
\end{equation*}
where~$d=f$~or~$m$.

Taking~$(\boldsymbol{v}_c,q;\boldsymbol{v}_f,\psi_f;\boldsymbol{v}_m,\psi_m)
=(\boldsymbol{v}_c^h,q^h;\boldsymbol{v}_f^h,\psi_f^h;\boldsymbol{v}_m^h,\psi_m^h)$~in~\eqref{S1}, \eqref{S2}~and~\eqref{S3}. \eqref{Step1}~\eqref{Step2}\\
and~\eqref{Step3}~are subtracted from the corresponding term, respectively. Which leads to
\begin{equation}\label{SS1}
\begin{split}
&\bigg( \frac{e_c^{n+1}-e_c^n}{\Delta t},\boldsymbol{v}_c^h\bigg)_{\Omega_c}+a_{\Omega_c}(e_c^{n+1},\boldsymbol{v}_c^h)
-b(\boldsymbol{v}_h,\delta_c^{n+1})+b(e_c^{n+1},q^h)
-\frac{1}{\rho} \int_{\mathbb{I}} (P^h \phi_f(t_{n+1})-\phi_f^{n_k}) \boldsymbol{v}_c^h \cdot \boldsymbol{n}_d \mathrm{d}s\\
&+ \frac{\gamma}{\rho h}\int_{\mathbb{I}} \big(e_c^{n+1} - (P^h \boldsymbol{u}_f(t_{n+1})-\boldsymbol{u}_f^{n_k})\big) \cdot \boldsymbol{n}_d \boldsymbol{v}_c^h \cdot \boldsymbol{n}_d \mathrm{d}s\\
&= \bigg(d_t(P^h \boldsymbol{u}_c(t_{n+1})-\boldsymbol{u}_c(t_{n+1}))
+\frac{\boldsymbol{u}_c^n - \hat{\boldsymbol{u}}_c^n-\boldsymbol{u}_c(t_n)+\bar{\boldsymbol{u}}_c(t_n)}{\Delta t}+R_{tr}^{n+1},\boldsymbol{v}_c^h \bigg)_{\Omega_c},
\end{split}
\end{equation}

\begin{equation}\label{SS2}
\begin{split}
&\frac{\eta_m C_{mt}}{\rho} \bigg( \frac{\theta_m^{n_{k+1}}-\theta_m^{n_k}}{\Delta s},\psi_m^h\bigg)_{\Omega_d}
+\frac{1}{\rho} (\nabla \cdot e_m^{n_{k+1}},\psi_m^h)_{\Omega_d}
+\frac{1}{\rho} (\mu k_m^{-1} e_m^{n_{k+1}},\boldsymbol{v}_m^h)_{\Omega_d}\\
&-\frac{1}{\rho} (\theta_m^{n_{k+1}},\nabla \cdot \boldsymbol{v}_m^h)_{\Omega_d}
+\frac{\sigma k_m}{\rho \mu} \big(\theta_m^{n_{k+1}}-(P^h \phi_f(t_{n_{k+1}})-\phi_f^{n_k}),\psi_m^h \big)_{\Omega_d}\\
&=\frac{\eta_m C_{mt}}{\rho}\bigg( \frac{P^h \phi_m(t_{n_{k+1}})- P^h \phi_m(t_{n_k})}{\Delta s}
-\frac{\partial \phi_m}{\partial s}(t_{n_{k+1}}), \psi_m^h \bigg)_{\Omega_d},
\end{split}
\end{equation}
and
\begin{equation}\label{SS3}
\begin{split}
&\frac{\eta_f C_{ft}}{\rho} \bigg( \frac{\theta_f^{n_{k+1}}-\theta_f^{n_k}}{\Delta s},\psi_f^h\bigg)_{\Omega_d}
+\frac{1}{\rho} (\nabla \cdot e_f^{n_{k+1}},\psi_f^h)_{\Omega_d}
+\frac{1}{\rho} (\mu k_f^{-1} e_f^{n_{k+1}},\boldsymbol{v}_f^h)_{\Omega_d}\\
&-\frac{1}{\rho} (\theta_f^{n_{k+1}},\nabla \cdot \boldsymbol{v}_f^h)_{\Omega_d}\
+\frac{\sigma k_m}{\rho \mu} \big( \theta_f^{n_{k+1}}-(P^h \phi_m(t_{n_{k+1}})-\phi_m^{n_k}),\psi_f^h \big)_{\Omega_d}\\
&+ \frac{1}{\rho} \int_{\mathbb{I}} (P^h \phi_f(t_{n_{k+1}})-\phi_f^{n_k}) \boldsymbol{v}_f^h \cdot \boldsymbol{n}_d \mathrm{d}s
-\frac{\gamma}{\rho h} \int_{\mathbb{I}} \big( (P^h \boldsymbol{u}_c(t_{n_{k+1}})-S^{n_{k+1}})-e_f^{n_{k+1}}\big) \cdot \boldsymbol{n}_d \boldsymbol{v}_f^h \cdot \boldsymbol{n}_d \mathrm{d}s\\
&=\frac{\eta_f C_{ft}}{\rho} \bigg( \frac{P^h \phi_f(t_{n_{k+1}})- P_h \phi_f(t_{n_k})}{\Delta s}-
\frac{\partial \phi_f}{\partial s}(t_{n_{k+1}}),\psi_f^h \bigg)_{\Omega_d}.
\end{split}
\end{equation}

We below prove error convergence of solutions in sense of~$L^2$-norm and~$H^1$-seminorm~for different time steps in different region. The key of successful proof is to obtain the uniform~$L^{\infty}$-boundedness of~$\boldsymbol{u}_h^n$~at the assumption step.

\begin{theorem}\label{thm1}
Assume that~\eqref{CBF}~has a unique solution~$(\boldsymbol{u}_c(t_n),p_c(t_n);\boldsymbol{u}_f(t_n),\phi_f(t_n);\boldsymbol{u}_m(t_n),$\\
$\phi_m(t_n))$~satisfying the boundedness of assumption. There exists some positive constants~$\tau_1$\\
and~$h_1$~such that when~$\Delta t < \tau_1, h < h_1$~and~$\Delta t =\mathcal{O}(h^2)$, the solution of fully discrete decoupled modified characteristic scheme~\eqref{Step1},\eqref{Step2}~and~\eqref{Step3}~in the first large time interval $[0,t_{n_1}]$, for any~$0 \leq J \leq r$~satisfies
\begin{equation}\label{THMP1}
\begin{split}
&\max_{0 \leq J \leq r}\|e_c^{J}\|_0^2
+\frac{\nu \Delta t}{2} \sum_{n=0}^{J} \|\nabla e_c^{n}\|_0^2
\leq C(h^4 + \Delta t^2 + \Delta t^2 h^{-1}).
\end{split}
\end{equation}
On the other hand, for~$[0,T], 0 \leq l \leq M-1$, we obtain
\begin{equation}\label{THMP}
\begin{split}
&\max_{0 \leq l \leq M-1}\big( \|e_c^{n_{l+1}}\|_0^2 + \frac{\eta_f C_{ft}}{\rho} \|\theta_f^{n_{l+1}}\|_0^2
+ \frac{\eta_m C_{mt}}{\rho} \|\theta_m^{n_{l+1}}\|_0^2
\big)
+\nu \Delta t \sum_{k=0}^l \sum_{n=n_k}^{n_{k+1}-1}\|\nabla e_c^{n+1}\|_0^2\\
&+ \frac{2 \mu \Delta s}{\rho k_f}\sum_{k=0}^{l}\|e_f^{n_{k+1}}\|_0^2
+ \frac{2 \mu \Delta s}{\rho k_m}\sum_{k=0}^{l}\|e_m^{n_{k+1}}\|_0^2
\leq C(h^4 + \Delta t^2 + \Delta t^2 h^{-1}),
\end{split}
\end{equation}
and
\begin{equation}
\begin{split}
\Delta t  \sum_{k=0}^{M-1}\sum_{n=n_k+1}^{n_{k+1}}\|\boldsymbol{u}_c^n\|_{W^{1,\infty}}^2
+\max_{ n \in \{n_{k}+1,...,n_{k+1}\} , 0 \leq k \leq M-1} \|\boldsymbol{u}_c^n\|_{L^{\infty}}
\leq C_B.
\end{split}
\end{equation}

\end{theorem}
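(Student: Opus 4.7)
I would proceed by nested mathematical induction: an outer induction on the large-step index $k=0,\ldots,M-1$ and, within each subinterval $[t_{n_k},t_{n_{k+1}}]$, an inner induction on the small-step index. Along the induction I would carry three statements at each level: the $L^2$ error bound in \eqref{THMP}, its $H^1$/$L^2$ summed companions, and a pointwise uniform bound $\|\boldsymbol{u}_c^n\|_{L^\infty}\le C_B$ together with a uniform-in-$n$ $W^{1,\infty}$ bound. The latter is indispensable because the characteristic Lemmas~\ref{LEM}--\ref{Lem2} produce ``constants'' that implicitly depend on $\|\boldsymbol{u}_c^n\|_{W^{1,\infty}}$ (to make the change of variables $x\mapsto x-\boldsymbol{u}_c^n\Delta t$ a diffeomorphism).

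For the base case \eqref{THMP1} on $[0,t_{n_1}]$, I test \eqref{SS1} with $(\boldsymbol{v}_c^h,q^h)=(2\Delta t\,e_c^{n+1},2\Delta t\,\delta_c^{n+1})$ and sum $n=0,\ldots,J$; since $\boldsymbol{u}_c^0=P^h\boldsymbol{u}_{c0}$ etc., all initial projection errors vanish and the outer interface data $\phi_f^{n_k}$, $\boldsymbol{u}_f^{n_k}$ remain frozen at the projected initial values. The characteristic perturbation
\begin{equation*}
\tfrac{1}{\Delta t}\bigl(\boldsymbol{u}_c^n-\hat{\boldsymbol{u}}_c^n-\boldsymbol{u}_c(t_n)+\bar{\boldsymbol{u}}_c(t_n)\bigr)
\end{equation*}
is split by inserting $\pm\boldsymbol{u}_c(\hat x,t_n)$: the shift-mismatch piece $\boldsymbol{u}_c(\bar x,t_n)-\boldsymbol{u}_c(\hat x,t_n)$ is bounded by Lemma~\ref{Lem2} with $g_1=\boldsymbol{u}_c(t_n)$, $g_2=\boldsymbol{u}_c^n$, and the remaining finite-difference of $\boldsymbol{u}_c^n-\boldsymbol{u}_c(t_n)$ is handled by Taylor expansion together with the a-priori bound on $\|\boldsymbol{u}_c^n\|_{W^{1,\infty}}$. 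The truncation $R_{tr}^{n+1}$ is absorbed by Lemma~\ref{Lem1}; the interface term carrying $P^h\phi_f(t_{n+1})-\phi_f^0$ is $\mathcal O(\Delta t)$ in $L^2(\Omega_d)$ and its trace, amplified by the trace inverse inequality $B_2$, is precisely where the $\Delta t^2 h^{-1}$ contribution appears. Discrete Gronwall under $\hat C\Delta t<1$ closes the base case.

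For the inductive step I assume \eqref{THMP} at outer level $l-1$ together with $\|\boldsymbol{u}_c^n\|_{L^\infty}+\|\boldsymbol{u}_c^n\|_{W^{1,\infty}}\le C_B$ at all earlier small steps, then test \eqref{SS1}--\eqref{SS3} by $(2\Delta t\,e_c^{n+1},2\Delta t\,\delta_c^{n+1})$, $(2\Delta s\,e_m^{n_{k+1}},2\Delta s\,\theta_m^{n_{k+1}})$, and $(2\Delta s\,e_f^{n_{k+1}},2\Delta s\,\theta_f^{n_{k+1}})$ respectively, summing the inner index over $[n_k,n_{k+1}-1]$ and the outer index up to $l$. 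The cross interface terms recombine by exactly the algebraic identity used in Theorem~3.1 of Section~3: the $\phi_f^{n_k}$ jumps telescope into $\theta_f$-differences, and the averaging mismatch $P^h\boldsymbol{u}_c(t_{n_{k+1}})-S^{n_{k+1}}$ splits into an $\mathcal O(\Delta s)$ piece plus $r^{-1}\sum_{n=n_k}^{n_{k+1}-1}e_c^{n+1}$ which is absorbable on the left-hand side by Young. Projection bounds \eqref{pro1}--\eqref{pro3} deliver the $h^4$ piece, the temporal consistencies $\Psi_{c,t}^{n+1}$ and $\Psi_{d,t,1}^{n+1}+\Psi_{d,t,2}^{n+1}$ computed in Section~4.1~B deliver $\mathcal O(\Delta t^2)$, and the trace-penalty mechanism reproduces $\Delta t^2 h^{-1}$. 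A second application of discrete Gronwall then yields \eqref{THMP} at level $l$.

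\textbf{Main obstacle.} The heart of the difficulty is the circular dependence: Lemmas~\ref{LEM} and~\ref{Lem2} require $\|\boldsymbol{u}_c^n\|_{W^{1,\infty}}$ to be a-priori bounded, but this bound is itself part of the conclusion. I would break the loop at the end of each inductive step via the inverse inequality $B_1$ applied to the freshly-proved $L^2$ bound:
\begin{equation*}
\|\boldsymbol{u}_c^{n_{k+1}}\|_{L^\infty}\le\|P^h\boldsymbol{u}_c(t_{n_{k+1}})\|_{L^\infty}+C_{inv}\,h^{-D/2}\|e_c^{n_{k+1}}\|_0,
\end{equation*}
which under $\Delta t=\mathcal O(h^2)$ and $h\le h_1$ is uniformly $\mathcal O(1)$ because $\|e_c^{n_{k+1}}\|_0=\mathcal O(h^2+\Delta t+\Delta t h^{-1/2})$; the time-averaged $W^{1,\infty}$ bound follows by applying the analogous inverse chain to $\nu\Delta t\sum\|\nabla e_c^{n+1}\|_0^2$ together with \eqref{Assume}. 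These bounds simultaneously deliver the last two conclusions of the theorem and furnish the induction hypothesis at the next level, closing the argument. The essential constraint $\Delta t=\mathcal O(h^2)$ enters exactly here, balancing the $\Delta t^2 h^{-1}$ penalty loss against the $h^{-D/2}$ cost of the inverse inequality.
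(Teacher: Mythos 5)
Your plan coincides with the paper's own argument: the paper likewise proves \eqref{THMP1} by induction on the fine index in $[0,t_{n_1}]$ and \eqref{THMP} by induction on the coarse index, testing \eqref{SS1}--\eqref{SS3} with $(2\Delta t\,e_c^{n+1},2\Delta t\,\delta_c^{n+1})$ and $(2\Delta s\,e_f^{n_{k+1}},2\Delta s\,\theta_f^{n_{k+1}})$, $(2\Delta s\,e_m^{n_{k+1}},2\Delta s\,\theta_m^{n_{k+1}})$, invoking Lemmas~\ref{LEM}--\ref{Lem2}, the projection estimates \eqref{pro1}--\eqref{pro3}, the trace/trace-inverse inequalities for the $\Delta t^2h^{-1}$ term, and discrete Gronwall. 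Your device for breaking the circularity --- recovering $\|\boldsymbol{u}_c^n\|_{L^\infty}$ and the $W^{1,\infty}$ bound from the freshly proved error bound via inverse/discrete Sobolev inequalities under $\Delta t=\mathcal{O}(h^2)$ --- is exactly the mechanism used in \eqref{bound1}--\eqref{Boundu} and \eqref{Bound}--\eqref{Boundd}, so the proposal is essentially the paper's proof.
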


\begin{proof}
We give the proof of~\eqref{THMP1}~by mathematical induction. First of all, when~$J=0$, $e_c^0=0, \nabla e_c^0=0$. It's obvious that~\eqref{THMP1}~holds at the initial time step. When~$J=m, 1 \leq m \leq r-1$, assume that~\eqref{THMP1}~holds, i.e.,
\begin{equation}\label{THMP11}
\|e_c^{m}\|_0^2
+\frac{\nu \Delta t}{2} \sum_{n=0}^{m} \|\nabla e_c^{n}\|_0^2
\leq C(h^4 + \Delta t^2 + \Delta t^2 h^{-1}).
\end{equation}
Using the discrete Sobolev inequality~\eqref{DS}, the properties~\eqref{Assume}~and the regularity assumption in (??), when~$\Delta t < \tau_1, h<h_1$~and~$\Delta t=\mathcal{O}(h^2)$, if~$d=2$, we have
\begin{equation}\label{bound1}
\begin{split}
\|\boldsymbol{u}_{c}^m\|_{L^{\infty}} &\leq \|e_{c}^m\|_{L^{\infty}} + \|P^h \boldsymbol{u}_c(t_m)\|_{L^{\infty}}
\leq C_{DS}(1+|\ln(h)|)^{1/2} \|e_{c}^m\|_1 + C \| \boldsymbol{u}_c(t_m)\|_2\\
&\leq C(1+|\ln(h)|)^{1/2} (\Delta t^{-1/2} h^2+\Delta t^{1/2} + \Delta t^{1/2} h^{-1/2}) + C\\
&\leq C_B.
\end{split}
\end{equation}
Actually, $h |\ln(h)| \rightarrow 0(h\rightarrow 0)$. If~$d=3$, we obtain
\begin{equation}\label{bound2}
\begin{split}
\|\boldsymbol{u}_{c}^m\|_{L^{\infty}} &\leq \|e_{c}^m\|_{L^{\infty}} + \|P^h \boldsymbol{u}_c(t_m)\|_{L^{\infty}}
\leq C_{DS} h^{-1/2} \|e_{c}^m\|_1 + C\|\boldsymbol{u}_c(t_m)\|_2\\
&\leq C h^{-1/2} (\Delta t^{-1/2}h^2 + \Delta t^{1/2} + \Delta t^{1/2} h^{-1/2}) + C \leq C_B.
\end{split}
\end{equation}
In addition, by the properties of~\eqref{Assume}, inverse inequality~\eqref{inverse2}, imbedding theorem and the regularity assumption in~\eqref{suppose}, we obtain
\begin{equation}\label{Boundu}
\begin{split}
\Delta t \|\boldsymbol{u}_c^m\|_{W^{1,\infty}} &\leq \Delta t(\|e_c^m\|_{W^{1,\infty}}+\|P^h \boldsymbol{u}_c(t_m)\|_{W^{1,\infty}})\\
&\leq \Delta t (Ch^{-3/2}\|\nabla e_c^m\|_0 + C\|\boldsymbol{u}_c(t_m)\|_{W^{2,d^{\ast}}})\\
&\leq C(\Delta t^{1/2} h^{1/2} + \Delta t^{3/2} h^{-3/2} + \Delta t^{3/2} h^{-2} + \Delta t )\\
&\leq \frac{1}{4},
\end{split}
\end{equation}
where~$d^{\ast} > D$.

When~$J=m+1$, taking~$\boldsymbol{v}_c^h=2 \Delta t e_c^{n+1}$~and~$q_h=2 \Delta t \delta_c^{n+1}$~in~\eqref{SS1}, sum over~$n=0, 1,...,m(1 \leq m\leq r-1)$, we have
\begin{align*}
&\|e_c^{m+1}\|_0^2 - \|e_c^{0}\|_0^2 + \sum_{n=0}^{m}\|e_c^{n+1}-e_c^n\|_0^2
+2 \nu \Delta t \sum_{n=0}^{m} \|\nabla e_c^{n+1}\|_0^2
+\frac{2 \alpha \nu \Delta t}{\sqrt{k}} \sum_{n=0}^{m} \|P_{\tau} (e_c^{n+1})\|_{L^2(\mathbb{I})}^2\\
&= \sum_{n=0}^{m} \bigg( \Psi_{c,t}^{n+1}+
\frac{\boldsymbol{u}_c^n - \hat{\boldsymbol{u}}_c^n - \boldsymbol{u}_c(t_n) + \bar{\boldsymbol{u}}_c(t_n)}{\Delta t} + R_{tr}^{n+1},2\Delta t e_c^{n+1} \bigg)_{\Omega_c}\\
&+ \frac{2 \Delta t}{\rho} \sum_{n=0}^{m} \int_{\mathbb{I}} (P^h \phi_f(t_{n+1})-\phi_f^{0}) e_c^{n+1} \cdot \boldsymbol{n}_d \mathrm{d}s \\
&- \frac{2 \gamma \Delta t}{\rho h} \sum_{n=0}^{m} \int_{\mathbb{I}} \big( e_c^{n+1} - (P^h \boldsymbol{u}_f(t_{n+1}) - \boldsymbol{u}_f^{0})\big) \cdot \boldsymbol{n}_d e_c^{n+1} \cdot \boldsymbol{n}_d \mathrm{d}s \\
&:= A_1 + A_2 + A_3.
\end{align*}
Note that in the first large time interval, $n_k=n_0=0$.
The first term
\begin{equation*}
\begin{split}
A_1 &=2\Delta t \sum_{n=0}^{m}(\Psi_{c,t}^{n+1},e_c^{n+1})_{\Omega_c}
+ 2\Delta t \sum_{n=0}^{m} \bigg(\frac{\boldsymbol{u}_c^n - \hat{\boldsymbol{u}}_c^n-\boldsymbol{u}_c(t_n)+\bar{\boldsymbol{u}}_c(t_n)}{\Delta t},e_c^{n+1}\bigg)_{\Omega_c}\\
&~~~~+ 2\Delta t \sum_{n=0}^{m} (R_{tr}^{n+1}, e_c^{n+1})_{\Omega_c}\\
&:=A_{11}+A_{12}+A_{13}.
\end{split}
\end{equation*}
By the~Cauchy-Schwarz~inequality, Poincar$\acute{\mathrm{e}}$-Friedrichs~inequality, Young~inequality and some techniques in part B of 4.1, we estimate
\begin{equation*}
\begin{split}
A_{11} &\leq 2C_{PF} \Delta t \sum_{n=0}^{m} \|\Psi_{c,t}^{n+1}\|_0 \|\nabla e_c^{n+1}\|_0\\
&\leq \frac{6C_{PF}^2}{\nu}\int_{0}^{t_{n_1}} \bigg\|(P^h-I)\frac{\partial \boldsymbol{u}_c}{\partial t}(t)\bigg\|_0^2 \mathrm{d}t
+ \frac{\nu \Delta t}{6} \sum_{n=0}^{m} \|\nabla e_c^{n+1}\|_0^2.
\end{split}
\end{equation*}
At the same time, using the~H$\mathrm{\ddot{o}}$lder~inequality, Poincar$\acute{\mathrm{e}}$-Friedrichs~inequality, Lemma~\ref{Lem2}, the trace inequality, inverse inequality, Sobolev~interpolation formulas, imbedding theorem and the boundedness of~\eqref{bound1}~\eqref{bound2}~\eqref{Boundu}, we show that
\begin{align*}
&2\Delta t \bigg(\frac{\boldsymbol{u}_c^n - \hat{\boldsymbol{u}}_c^n-\boldsymbol{u}_c(t_n)+\bar{\boldsymbol{u}}_c(t_n)}{\Delta t},e_c^{n+1}\bigg)_{\Omega_c}\\
&\leq 2C_{PF} \|P^h \boldsymbol{u}_c(t_n)-\boldsymbol{u}_c(t_n)-(P^h \bar{\boldsymbol{u}}_c(t_n)-\bar{\boldsymbol{u}}_c(t_n))\|_{-1} \|\nabla e_c^{n+1}\|_0\\
&+2 \|P^h \bar{\boldsymbol{u}}_c(t_n)-\bar{\boldsymbol{u}}_c(t_n)-(P^h \hat{\boldsymbol{u}}_c(t_n)-\hat{\boldsymbol{u}}_c(t_n))\|_{L^1}\|e_c^{n+1}\|_{L^{\infty}}\\
&+2 \|\hat{e}_c^n-e_c^n\|_0 \|e_c^{n+1}\|_0
+2 \|\hat{\boldsymbol{u}}_c(t_n)-\bar{\boldsymbol{u}}_c(t_n)\|_{L^{6/5}} \|e_c^{n+1}\|_{L^6}\\
&\leq 2 C_{PF} C_n \Delta t \|P^h \boldsymbol{u}_c(t_n)-\boldsymbol{u}_c(t_n)\|_0 \|\boldsymbol{u}_c(t_n)\|_{W^{1,4}}\|\nabla e_c^{n+1}\|_0\\
&+ 2C_N C_{inv} h^{-1/2} \Delta t \|P^h \boldsymbol{u}_c(t_n)-\boldsymbol{u}_c(t_n)\|_1
\|\boldsymbol{u}_c(t_n)-\boldsymbol{u}_c^n\|_0 \|e_c^{n+1}\|_{L^6}\\
&+ 2C_N C_{PF} \Delta t \|\nabla e_c^n\|_0 \|\boldsymbol{u}_c^n\|_{L^{\infty}} \|e_c^{n+1}\|_0\\
&+ 2C_N C_S C_{PF} \Delta t \|\boldsymbol{u}_c(t_n)\|_{W^{1,3}} \|\boldsymbol{u}_c^n - \boldsymbol{u}_c(t_n)\|_0 \|\nabla e_c^{n+1}\|_0\\
&\leq 2 C_{PF} C_n C_{pc} C_H h^2 \Delta t \|\nabla e_c^{n+1}\|_0\\
&+ 2 C_N C_S C_{inv} C_{pc} C_{PF} h^{1/2} \Delta t (C_{pc} h^2 + \|e_c^n\|_0)\|\nabla e_c^{n+1}\|_0\\
&+ 2 C_N C_{PF} C_B \Delta t  \|\nabla e_c^n\|_0 \|e_c^{n+1}\|_0
+ 2 C_N C_S C_{PF} C_H \Delta t (\|e_c^n\|_0 + C_{pc}h^2) \|\nabla e_c^{n+1}\|_0\\
&\leq \frac{\nu \Delta t}{6}\|\nabla e_c^{n+1}\|_0^2 + \frac{\nu \Delta t}{2}\|\nabla e_c^n\|_0^2
+ \frac{2 C_N^2 C_{PF}^2 C_B^2 \Delta t }{\nu} \|e_c^{n+1}\|_0^2
+ \frac{30 C_N^2 C_S^2 C_{PF}^2 C_H^2 \Delta t}{\nu} \|e_c^n\|_0^2\\
&+ Ch^4 \Delta t.
\end{align*}
Therefore
\begin{align*}
A_{12} &\leq \frac{\nu \Delta t}{6}\sum_{n=0}^{m} \|\nabla e_c^{n+1}\|_0^2 + \frac{\nu \Delta t}{2} \sum_{n=0}^{m} \|\nabla e_c^n\|_0^2
+ \frac{2 C_N^2 C_{PF}^2 C_B^2 \Delta t }{\nu} \sum_{n=0}^{m}\|e_c^{n+1}\|_0^2\\
&+ \frac{30 C_N^2 C_S^2 C_{PF}^2 C_H^2 \Delta t}{\nu} \sum_{n=0}^{m} \|e_c^n\|_0^2
+ Ch^4 \Delta t.
\end{align*}
Analogous to~$A_{11}$, using Lemma~\ref{Lem1}, we deduce that
\begin{equation*}
A_{13} \leq \frac{6C_{PF}^2 \Delta t}{\nu} \sum_{n=0}^{m} \|R_{tr}^{n+1}\|_0^2 + \frac{\nu \Delta t}{6} \sum_{n=0}^{m} \|\nabla e_c^{n+1}\|_0^2.
\end{equation*}
For the first term
\begin{equation*}
\begin{split}
A_1 &\leq \frac{\nu \Delta t}{2}\sum_{n=0}^{m} (\|\nabla e_c^{n+1}\|_0^2+\|\nabla e_c^n\|_0^2)
+ \frac{2 C_N^2 C_{PF}^2 C_B^2 \Delta t}{\nu} \sum_{n=0}^{m} \|e_c^{n+1}\|_0^2\\
&+ \frac{30 C_N^2 C_S^2 C_{PF}^2 C_H^2 \Delta t}{\nu} \sum_{n=0}^{m} \|e_c^n\|_0^2
+\frac{6C_{PF}^2 \Delta t}{\nu} \sum_{n=0}^{m} \|R_{tr}^{n+1}\|_0^2\\
&+\frac{6C_{PF}^2}{\nu}\int_{0}^{t_{n_{1}}} \bigg\|(P^h-I)\frac{\partial \boldsymbol{u}_c}{\partial t}(t)\bigg\|_0^2 \mathrm{d}t
+Ch^4 \Delta t.
\end{split}
\end{equation*}
For the second term, $A_2$~is bounded by the trace inequality and trace inverse inequality,
\begin{equation*}
\begin{split}
A_2 &=\frac{2 \Delta t}{\rho} \sum_{n=0}^{m} \int_{\mathbb{I}} (P^h \phi_f(t_{n+1})-\phi_f^{0}) e_c^{n+1} \cdot \boldsymbol{n}_d \mathrm{d}s\\
&\leq \frac{2\Delta t}{\rho} \sum_{n=0}^{m} \|P^h \phi_f(t_{n+1})-P^h \phi_f(0)\|_{L^2(\mathbb{I})} \|e_c^{n+1} \cdot \boldsymbol{n}_d\|_{L^2(\mathbb{I})}
+\frac{2\Delta t}{\rho} \sum_{n=0}^{m} \|\theta_f^{0}\|_{L^2(\mathbb{I})} \|e_c^{n+1} \cdot \boldsymbol{n}_d\|_{L^2(\mathbb{I})}\\
&\leq \frac{2 r C_T \tilde{C}_{inv} C_{PF}^{1/2} \Delta t}{\rho h^{\frac{1}{2}}} \sum_{n=0}^{m} \|P^h \phi_f(t_{n+1}) - P^h \phi_f(t_n)\|_0 \|\nabla e_c^{n+1}\|_0
+ \frac{2 C_T \tilde{C}_{inv} C_{PF}^{1/2} \Delta t}{\rho h^{\frac{1}{2}}}\sum_{n=0}^{m} \|\theta_f^{0}\|_0 \|\nabla e_c^{n+1}\|_0\\
&\leq \frac{4 r^2 C_T^2 \tilde{C}_{inv}^2 C_{PF} \Delta t^2}{\rho^2 \nu h} \int_{0}^{t_{n_1}} \bigg\|\frac{\partial P^h \phi_f}{\partial t}(t) \bigg\|_0^2 \mathrm{d}t
+\frac{\nu \Delta t}{2}\sum_{n=0}^{m} \|\nabla e_c^{n+1}\|_0^2
+\frac{4C_T^2 \tilde{C}_{inv}^2 C_{PF} \Delta t}{\rho^2 \nu h} \sum_{n=0}^{m} \|\theta_f^{0}\|_0^2.
\end{split}
\end{equation*}
For the third term, $A_3$~does the identity transformation,
\begin{equation*}
\begin{split}
A_3 &= - \frac{2 \gamma \Delta t}{\rho h} \sum_{n=0}^{m} \int_{\mathbb{I}} \big( e_c^{n+1} - (P^h \boldsymbol{u}_f(t_{n+1}) - \boldsymbol{u}_f^{0})\big) \cdot \boldsymbol{n}_d e_c^{n+1} \cdot \boldsymbol{n}_d \mathrm{d}s\\
&= - \frac{2 \gamma \Delta t}{\rho h} \sum_{n=0}^{m} \int_{\mathbb{I}} \big( e_c^{n+1} - e_f^{0}
-(P^h \boldsymbol{u}_f(t_{n+1})-P^h \boldsymbol{u}_f(0)) \big) \cdot \boldsymbol{n}_d (e_c^{n+1}-e_f^{0}+e_f^{0}) \mathrm{d}s\\
&= - \frac{2 \gamma \Delta t}{\rho h} \sum_{n=0}^{m} \|e_c^{n+1}-e_f^{0}\|_{L^2(\mathbb{I})}^2
+\frac{2 \gamma \Delta t}{\rho h} \sum_{n=0}^{m} \int_{\mathbb{I}} \big( P^h \boldsymbol{u}_f(t_{n+1})-P^h \boldsymbol{u}_f(0)\big) \cdot \boldsymbol{n}_d e_c^{n+1} \cdot \boldsymbol{n}_d \mathrm{d}s\\
&+\frac{2 \gamma \Delta t}{\rho h} \sum_{n=0}^{m}\int_{\mathbb{I}} (e_f^{0}-e_c^{n+1}) \cdot \boldsymbol{n}_d e_f^{0} \cdot \boldsymbol{n}_d \mathrm{d}s,
\end{split}
\end{equation*}
where using~H$\mathrm{\ddot{o}}$lder inequality, the general trace inequality, Young inequality, the properties of~$H(\mathrm{div})$~space and the divergence free condition, we arrive at
\begin{align*}
&~~~~\frac{2 \gamma \Delta t}{\rho h} \sum_{n=0}^{m} \int_{\mathbb{I}} \big( P^h \boldsymbol{u}_f(t_{n+1})-P^h \boldsymbol{u}_f(0)\big) \cdot \boldsymbol{n}_d e_c^{n+1} \cdot \boldsymbol{n}_d \mathrm{d}s\\
&\leq \frac{2 \gamma \Delta t}{\rho h} \sum_{n=0}^{m} \|\big( P^h \boldsymbol{u}_f(t_{n+1}) - P^h \boldsymbol{u}_f(0)\big) \cdot \boldsymbol{n}_d\|_{H^{\frac{1}{2}}(\partial \Omega_c)} \|e_c^{n+1} \cdot \boldsymbol{n}_d\|_{H^{-\frac{1}{2}}(\partial \Omega_c)}\\
&\leq \frac{2r\gamma C_T C_{div}\Delta t}{\rho h}\sum_{n=0}^{m}\|P^h \boldsymbol{u}_f(t_{n+1})-P^h \boldsymbol{u}_f(t_n)\|_1 \|e_c^{n+1}\|_{H(\mathrm{div})}\\
&\leq \frac{r^2 \gamma^2 C_T^2  \Delta t^2}{\rho^2 h} \int_{0}^{t_{n_{1}}} \bigg\|\frac{\partial P^h \boldsymbol{u}_f}{\partial t}(t)\bigg\|_1^2 \mathrm{d}t
+ \frac{C_{div}^2 \Delta t}{h}\sum_{n=0}^{m}\|e_c^{n+1}\|_0^2.
\end{align*}
Therefore, $A_3$~follows that
\begin{align*}
A_3 &\leq - \frac{2 \gamma \Delta t}{\rho h} \sum_{n=0}^{m} \|e_c^{n+1}-e_f^{0}\|_{L^2(\mathbb{I})}^2
+\frac{r^2 \gamma^2 C_T^2  \Delta t^2}{\rho^2 h} \int_{0}^{t_{n_{1}}} \bigg\|\frac{\partial P^h \boldsymbol{u}_f}{\partial t}(t)\bigg\|_1^2 \mathrm{d}t
+ \frac{C_{div}^2 \Delta t}{h}\sum_{n=0}^{m}\|e_c^{n+1}\|_0^2\\
&+\frac{\gamma \Delta t}{\rho h} \sum_{n=0}^{m}(\|e_f^{0}\|_{L^2(\mathbb{I})}^2 - \|e_c^{n+1} \cdot \boldsymbol{n}_d\|_{L^2(\mathbb{I})}^2 + \|e_c^{n+1}-e_f^{0}\|_{L^2(\mathbb{I})}^2).
\end{align*}
Collect the estimates of~$A_1$, $A_2$~and~$A_3$~to obtain
\begin{align*}
&\|e_c^{m+1}\|_0^2 + \sum_{n=0}^{m}\|e_c^{n+1}-e_c^n\|_0^2
+\frac{\nu \Delta t}{2} \sum_{n=0}^{m} \|\nabla e_c^{n+1}\|_0^2
+\frac{2 \alpha \nu \Delta t}{\sqrt{k}} \sum_{n=0}^{m} \|P_{\tau} (e_c^{n+1})\|_{L^2(\mathbb{I})}^2\\
&+ \frac{\gamma \Delta t}{\rho h} \sum_{n=0}^{m} \|e_c^{n+1}-e_f^{0}\|_{L^2(\mathbb{I})}^2
+ \frac{\gamma \Delta t}{\rho h} \sum_{n=0}^{m} \|e_c^{n+1} \cdot \boldsymbol{n}_d\|_{L^2(\mathbb{I})}^2\\
&\leq \Delta t \sum_{n=0}^{m} \bigg( \frac{2 C_N^2 C_{PF}^2 C_B^2}{\nu}  \|e_c^{n+1}\|_0^2
+ \frac{30 C_N^2 C_S^2 C_{PF}^2 C_H^2}{\nu} \|e_c^n\|_0^2
+ \frac{C_{div}^2}{h}\|e_c^{n+1}\|_0^2 \bigg)\\
&+\frac{6C_{PF}^2 \Delta t}{\nu} \sum_{n=0}^{m} \|R_{tr}^{n+1}\|_0^2
+\frac{6C_{PF}^2}{\nu}\int_{0}^{t_{n_{1}}} \bigg\|(P^h-I)\frac{\partial \boldsymbol{u}_c}{\partial t}(t)\bigg\|_0^2 \mathrm{d}t +Ch^4 \Delta t\\
&+ \frac{4 r^2 C_T^2 \tilde{C}_{inv}^2 C_{PF} \Delta t^2}{\rho^2 \nu h} \int_{0}^{t_{n_1}} \bigg\|\frac{\partial P^h \phi_f}{\partial t}(t) \bigg\|_0^2 \mathrm{d}t
+\frac{4C_T^2 \tilde{C}_{inv}^2 C_{PF} \Delta t}{\rho^2 \nu h} \sum_{n=0}^{m} \|\theta_f^{0}\|_0^2\\
&+\frac{r^2 \gamma^2 C_T^2  \Delta t^2}{\rho^2 h} \int_{0}^{t_{n_{1}}} \bigg\|\frac{\partial P^h \boldsymbol{u}_f}{\partial t}(t)\bigg\|_1^2 \mathrm{d}t
+\frac{\gamma \Delta t}{\rho h} \sum_{n=0}^{m}\|e_f^{0}\|_{L^2(\mathbb{I})}^2
+\|e_c^{0}\|_0^2.
\end{align*}
By the discrete Growall lemma, Lemma 4.1 and~$\Delta t=\mathcal{O}(h^2)$, when~$\kappa_n \Delta t \leq \frac{1}{2}$, where~$\kappa_n=\frac{2 C_N^2 C_{PF}^2 C_B^2}{\nu}
+ \frac{30 C_N^2 C_S^2 C_{PF}^2 C_H^2}{\nu}+ \frac{C_{div}^2}{h}$, then
\begin{equation*}
\|e_c^{m+1}\|_0^2
+\frac{\nu \Delta t}{2} \sum_{n=0}^{m} \|\nabla e_c^{n+1}\|_0^2
\leq C(h^4+\Delta t^2 + \Delta t^2 h^{-1}).
\end{equation*}
In conclusion, in the first large time interval~$[0,t_{n_1}]$, when~$0 \leq J \leq r$, it yields that
\begin{equation*}
\begin{split}
\max_{0 \leq J \leq r}\|e_c^{J}\|_0^2
+\frac{\nu \Delta t}{2} \sum_{n=0}^{J} \|\nabla e_c^{n}\|_0^2
\leq C(h^4 + \Delta t^2 + \Delta t^2 h^{-1}).
\end{split}
\end{equation*}

Taking~$\boldsymbol{v}_m^h=2\Delta s e_m^{n_{k+1}}, \psi_m^h=2 \Delta s \theta_m^{n_{k+1}}, \boldsymbol{v}_f^h=2 \Delta s e_f^{n_{k+1}}$~and~$\psi_f^h=2\Delta s \theta_f^{n_{k+1}}$~in~\eqref{SS2}~and ~\eqref{SS3}, sum over two equations
\begin{align}
&\frac{\eta_d C_{dt}}{\rho} (\|\theta_d^{n_{k+1}}\|_0^2 - \|\theta_d^{n_k}\|_0^2 + \|\theta_d^{n_{k+1}}-\theta_d^{n_k}\|_0^2)
+\frac{2\mu \Delta s}{\rho k_m}\|e_m^{n_{k+1}}\|_0^2
+\frac{2\mu \Delta s}{\rho k_f}\|e_f^{n_{k+1}}\|_0^2 \nonumber\\
&+\frac{\sigma k_m \Delta s}{\rho \mu}(\|\theta_m^{n_{k+1}}\|_0^2 - \|\theta_f^{n_k}\|_0^2 + \|\theta_m^{n_{k+1}} - \theta_f^{n_k}\|_0^2)
+\frac{\sigma k_m \Delta s}{\rho \mu}(\|\theta_f^{n_{k+1}}\|_0^2 - \|\theta_m^{n_k}\|_0^2 +
\|\theta_f^{n_{k+1}}-\theta_m^{n_k}\|_0^2)\nonumber\\
&=\frac{\eta_d C_{dt}}{\rho} (\psi_{d,s}^{n_{k+1}},2\Delta s \theta_d^{n_{k+1}})_{\Omega_d}
+\frac{2 \gamma \Delta s}{\rho h} \int_{\mathbb{I}} \big( (P^h \boldsymbol{u}_c(t_{n_{k+1}})-S^{n_{k+1}})-e_f^{n_{k+1}} \big) \cdot \boldsymbol{n}_d e_f^{n_{k+1}} \cdot \boldsymbol{n}_d \mathrm{d}s \nonumber\\
&+\bigg[ \frac{2 \sigma k_m \Delta s}{\rho \mu}(P^h \phi_m(t_{n_{k+1}})-P^h \phi_m(t_{n_k}),\theta_f^{n_{k+1}})_{\Omega_d}
+\frac{2\sigma k_m \Delta s}{\rho \mu}(P^h \phi_f(t_{n_{k+1}})-P^h \phi_f(t_{n_k}),\theta_m^{n_{k+1}})_{\Omega_d}\bigg] \nonumber\\
&-\frac{2\Delta s}{\rho} \int_{\mathbb{I}} (P^h \phi_f(t_{n_{k+1}})-\phi_f^{n_k}) e_f^{n_{k+1}} \cdot \boldsymbol{n}_d \mathrm{d}s \nonumber\\
&:=\sum_{i=1}^{4} B_i, \label{B}
\end{align}
where~$\psi_{d,s}^{n_{k+1}}=\psi_{d,s,1}^{n_{k+1}}+\psi_{d,s,2}^{n_{k+1}}$.

For the first term, use the previous techniques
\begin{equation*}
\begin{split}
B_1&=\frac{\eta_d C_{dt}}{\rho} (\psi_{d,s}^{n_{k+1}},2\Delta s \theta_d^{n_{k+1}})_{\Omega_d}\\
&=\frac{\eta_f C_{ft}}{\rho} \bigg(\Psi_{f,s}^{n_{k+1}},2\Delta s \theta_f^{n_{k+1}}\bigg)_{\Omega_d}
+\frac{\eta_m C_{mt}}{\rho} \bigg(\Psi_{m,s}^{n_{k+1}},2\Delta s \theta_m^{n_{k+1}}\bigg)_{\Omega_d}\\
&\leq \frac{2\eta_f C_{ft}\Delta s}{\rho} \|\Psi_{f,s}^{n_{k+1}}\|_0^2 +\frac{\eta_f C_{ft}\Delta s}{2\rho} \|\theta_f^{n_{k+1}}\|_0^2
+ \frac{2\eta_m C_{mt}\Delta s}{\rho} \|\Psi_{m,s}^{n_{k+1}}\|_0^2 +\frac{\eta_m C_{mt}\Delta s}{2\rho} \|\theta_m^{n_{k+1}}\|_0^2\\
&\leq \frac{2\eta_f C_{ft}\Delta s}{\rho}(\|\Psi_{f,s,1}^{n_{k+1}}\|_0^2+\|\Psi_{f,s,2}^{n_{k+1}}\|_0^2 ) +\frac{\eta_f C_{ft}\Delta s}{2\rho} \|\theta_f^{n_{k+1}}\|_0^2\\
&~~~~+ \frac{2\eta_m C_{mt}\Delta s}{\rho} (\|\Psi_{m,s,1}^{n_{k+1}}\|_0^2+\|\Psi_{m,s,2}^{n_{k+1}}\|_0^2 ) +\frac{\eta_m C_{mt}\Delta s}{2\rho} \|\theta_m^{n_{k+1}}\|_0^2\\
&\leq \frac{2\eta_f C_{ft}\Delta s^2}{3\rho}\int_{t_{n_k}}^{t_{n_{k+1}}} \bigg\|\frac{\partial^2 \phi_f}{\partial t^2} (t) \bigg\|_0^2 \mathrm{d}t
+\frac{2\eta_f C_{ft}}{\rho} \int_{t_{n_k}}^{t_{n_{k+1}}} \bigg\|(P^h - I) \frac{\partial \phi_f}{\partial t}(t)\bigg\|_0^2 \mathrm{d}t \\
&+ \frac{2\eta_m C_{mt}\Delta s^2}{3\rho}\int_{t_{n_k}}^{t_{n_{k+1}}} \bigg\|\frac{\partial^2 \phi_m}{\partial t^2}(t) \bigg\|_0^2 \mathrm{d}t
+\frac{2\eta_m C_{mt}}{\rho} \int_{t_{n_k}}^{t_{n_{k+1}}} \bigg\|(P^h - I) \frac{\partial \phi_m}{\partial t}(t)\bigg\|_0^2 \mathrm{d}t \\
&+\frac{\eta_f C_{ft} \Delta s}{2\rho} \|\theta_f^{n_{k+1}}\|_0^2
+\frac{\eta_m C_{mt} \Delta s}{2\rho} \|\theta_m^{n_{k+1}}\|_0^2.
\end{split}
\end{equation*}
For the second term
\begin{align*}
B_2&=\frac{2 \gamma \Delta s}{\rho h} \int_{\mathbb{I}} \big( (P^h \boldsymbol{u}_c(t_{n_{k+1}})-S^{n_{k+1}})-e_f^{n_{k+1}} \big) \cdot \boldsymbol{n}_d e_f^{n_{k+1}} \cdot \boldsymbol{n}_d \mathrm{d}s\\
&=\frac{2\gamma \Delta t}{\rho h} \sum_{n=n_k}^{n_{k+1}-1}\int_{\mathbb{I}} (P^h \boldsymbol{u}_c(t_{n_{k+1}})-P^h \boldsymbol{u}_c(t_{n+1}))\cdot \boldsymbol{n}_d e_f^{n_{k+1}} \cdot \boldsymbol{n}_d \mathrm{d}s\\
&+ \frac{2\gamma \Delta t}{\rho h}\sum_{n=n_k}^{n_{k+1}-1}\int_{\mathbb{I}} (e_c^{n+1}-e_f^{n_{k+1}})\cdot \boldsymbol{n}_d e_f^{n_{k+1}}\cdot \boldsymbol{n}_d \mathrm{d}s\\
&:= B_{21} + B_{22}.
\end{align*}
Using Cauchy-Schwarz inequality and trace inequality,
\begin{align*}
B_{21}&=\frac{2\gamma \Delta t}{\rho h} \sum_{n=n_k}^{n_{k+1}-1}\int_{\mathbb{I}} (P^h \boldsymbol{u}_c(t_{n_{k+1}})-P^h \boldsymbol{u}_c(t_{n+1}))\cdot \boldsymbol{n}_d e_f^{n_{k+1}} \cdot \boldsymbol{n}_d \mathrm{d}s\\
&\leq \frac{4 \gamma r^2 C_T^2 \Delta t^2}{\rho h} \int_{t_{n_k}}^{t_{n_{k+1}}} \bigg\| \frac{\partial P^h \boldsymbol{u}_c}{\partial t}\bigg\|_1^2 \mathrm{d}t
+\frac{\gamma \Delta t}{4\rho h} \sum_{n=n_k}^{n_{k+1}-1} \|e_f^{n_{k+1}} \cdot \boldsymbol{n}_d\|_{L^2(\mathbb{I})}^2.
\end{align*}
It is easy to see~$2(a-b,b) \leq (a,a) - (b,b)$, we have
\begin{equation*}
\begin{split}
B_{22}&=\frac{2\gamma \Delta t}{\rho h}\sum_{n=n_k}^{n_{k+1}-1}\int_{\mathbb{I}} (e_c^{n+1}-e_f^{n_{k+1}})\cdot \boldsymbol{n}_d e_f^{n_{k+1}}\cdot \boldsymbol{n}_d \mathrm{d}s\\
&\leq \frac{\gamma \Delta t}{\rho h} \sum_{n=n_k}^{n_{k+1}-1} (\|e_c^{n+1}\|_{L^2(\mathbb{I})}^2-\|e_f^{n_{k+1}} \cdot \boldsymbol{n}_d\|_{L^2(\mathbb{I})}^2).
\end{split}
\end{equation*}
Therefore,
\begin{equation*}
\begin{split}
B_{2} &\leq \frac{4 \gamma r^2 C_T^2 \Delta t^2}{\rho h} \int_{t_{n_k}}^{t_{n_{k+1}}} \bigg\| \frac{\partial P^h \boldsymbol{u}_c}{\partial t}\bigg\|_1^2 \mathrm{d}t
+\frac{\gamma \Delta t}{4\rho h} \sum_{n=n_k}^{n_{k+1}-1} \|e_f^{n_{k+1}} \cdot \boldsymbol{n}_d\|_{L^2(\mathbb{I})}^2\\
&+\frac{\gamma \Delta t}{\rho h} \sum_{n=n_k}^{n_{k+1}-1} (\|e_c^{n+1}\|_{L^2(\mathbb{I})}^2-\|e_f^{n_{k+1}} \cdot \boldsymbol{n}_d\|_{L^2(\mathbb{I})}^2).
\end{split}
\end{equation*}
For the third term
\begin{equation*}
\begin{split}
B_3&=\frac{2 \sigma k_m \Delta s}{\rho \mu}(P^h \phi_m(t_{n_{k+1}})-P^h \phi_m(t_{n_k}),\theta_f^{n_{k+1}})_{\Omega_d}
+\frac{2\sigma k_m \Delta s}{\rho \mu}(P^h \phi_f(t_{n_{k+1}})-P^h \phi_f(t_{n_k}),\theta_m^{n_{k+1}})_{\Omega_d}\\
&:= B_{31} + B_{32}.
\end{split}
\end{equation*}
By~Cauchy-Schwarz~inequality and the~Young~inequality, we have
\begin{equation*}
\begin{split}
B_{31}&=\frac{2\sigma k_m \Delta s}{\rho \mu}(P^h \phi_m(t_{n_{k+1}})-P^h \phi_m(t_{n_k}),\theta_f^{n_{k+1}})_{\Omega_d}\\
&\leq \frac{2 \sigma^2 k_m^2 \Delta s^2}{\eta_{f} C_{ft} \rho \mu^2}\int_{t_{n_k}}^{t_{n_{k+1}}}\bigg\|\frac{\partial P^h \phi_m}{\partial t}(t)\bigg\|_0^2 \mathrm{d}t
+ \frac{\eta_{f} C_{ft} \Delta s}{2 \rho}\|\theta_f^{n_{k+1}}\|_0^2.
\end{split}
\end{equation*}
Similarly available,
\begin{equation*}
\begin{split}
B_{32}&=\frac{2\sigma k_m \Delta s}{\rho \mu}(P^h \phi_f(t_{n_{k+1}})-P^h \phi_f(t_{n_k}),\theta_m^{n_{k+1}})_{\Omega_d}\\
&\leq \frac{2 \sigma^2 k_m^2 \Delta s^2}{\eta_{m} C_{mt}\rho \mu^2} \int_{t_{n_k}}^{t_{n_{k+1}}}\bigg\| \frac{\partial P^h \phi_{f}}{\partial t}(t) \bigg\|_0^2 \mathrm{d}t
+\frac{\eta_{m} C_{mt} \Delta s}{2 \rho}\|\theta_m^{n_{k+1}}\|_0^2.
\end{split}
\end{equation*}
We can conclude that
\begin{equation*}
\begin{split}
B_3 &\leq \frac{2 \sigma^2 k_m^2 \Delta s^2}{\eta_{f} C_{ft} \rho \mu^2}\int_{t_{n_k}}^{t_{n_{k+1}}}\bigg\|\frac{\partial P^h \phi_m}{\partial t}(t)\bigg\|_0^2 \mathrm{d}t
+ \frac{\eta_{f} C_{ft}\Delta s}{2 \rho}\|\theta_f^{n_{k+1}}\|_0^2\\
&+\frac{2 \sigma^2 k_m^2 \Delta s^2}{\eta_{m} C_{mt}\rho \mu^2} \int_{t_{n_k}}^{t_{n_{k+1}}}\bigg\| \frac{\partial P^h \phi_{f}}{\partial t}(t) \bigg\|_0^2 \mathrm{d}t
+\frac{\eta_{m} C_{mt} \Delta s}{2 \rho}\|\theta_m^{n_{k+1}}\|_0^2.
\end{split}
\end{equation*}
For the fourth term
\begin{equation*}
\begin{split}
B_4 &=-\frac{2\Delta s}{\rho} \int_{\mathbb{I}} (P^h \phi_f(t_{n_{k+1}})-\phi_f^{n_k}) e_f^{n_{k+1}} \cdot \boldsymbol{n}_d \mathrm{d}s\\
&=-\frac{2\Delta s}{\rho} \int_{\mathbb{I}}(P^h \phi_f(t_{n_{k+1}})-P^h \phi_f(t_{n_k})) e_f^{n_{k+1}} \cdot \boldsymbol{n}_d \mathrm{d}s\\
&-\frac{2 \Delta s}{\rho} \int_{\mathbb{I}}\theta_f^{n_k} e_f^{n_{k+1}} \cdot \boldsymbol{n}_d \mathrm{d}s\\
&:= B_{41} + B_{42}.
\end{split}
\end{equation*}
$B_{41}$~is bounded by the trace inequality and the Young inequality,
\begin{equation*}
\begin{split}
B_{41}&=-\frac{2\Delta s}{\rho} \int_{\mathbb{I}}(P^h \phi_f(t_{n_{k+1}})-P^h \phi_f(t_{n_k})) e_f^{n_{k+1}} \cdot \boldsymbol{n}_d \mathrm{d}s\\
&\leq \frac{8C_T^2 h \Delta s^2}{\rho \gamma}\int_{t_{n_k}}^{t_{n_{k+1}}} \bigg\| \frac{\partial P^h \phi_f}{\partial t}\bigg\|_1^2 \mathrm{d}t
+\frac{\gamma \Delta s}{8\rho h}\sum_{n=n_k}^{n_{k+1}-1}\|e_f^{n_{k+1}} \cdot \boldsymbol{n}_d\|_{L^2(\mathbb{I})}^2.
\end{split}
\end{equation*}
And~$B_{42}$~is bounded by the trace inverse inequality and the Young inequality,
\begin{equation*}
\begin{split}
B_{42}&= -\frac{2 \Delta s}{\rho} \int_{\mathbb{I}}\theta_f^{n_k} e_f^{n_{k+1}} \cdot \boldsymbol{n}_d \mathrm{d}s\\
&\leq \frac{2 \tilde{C}_{inv}h^{-1/2}\Delta s}{\rho}\sum_{n=n_k}^{n_{k+1}-1}\|\theta_f^{n_k}\|_0 \|e_f^{n_{k+1}} \cdot \boldsymbol{n}_d\|_{L^2(\mathbb{I})}\\
&\leq \frac{8 \tilde{C}_{inv}^2 \Delta s}{\rho \gamma}\sum_{n=n_k}^{n_{k+1}-1}\|\theta_f^{n_k}\|_0^2 + \frac{\gamma \Delta s}{8 \rho h}\sum_{n=n_k}^{n_{k+1}-1} \|e_f^{n_{k+1}} \cdot \boldsymbol{n}_d\|_{L^2(\mathbb{I})}^2.
\end{split}
\end{equation*}
Therefore,
\begin{equation*}
\begin{split}
B_4 &\leq \frac{8C_T^2 h \Delta s^2}{\rho \gamma}\int_{t_{n_k}}^{t_{n_{k+1}}} \bigg\| \frac{\partial P^h \phi_f}{\partial t}\bigg\|_1^2 \mathrm{d}t
+\frac{\gamma \Delta s}{4\rho h}\sum_{n=n_k}^{n_{k+1}-1}\|e_f^{n_{k+1}} \cdot \boldsymbol{n}_d\|_{L^2(\mathbb{I})}^2
+\frac{8 \tilde{C}_{inv}^2 \Delta s}{\rho \gamma}\sum_{n=n_k}^{n_{k+1}-1}\|\theta_f^{n_k}\|_0^2.
\end{split}
\end{equation*}
Substitute~$B_1-B_4$~into~\eqref{B}~and sum over~$k=0,...,M-1$,
\begin{align*}
&\frac{\eta_d C_{dt}}{\rho}\sum_{k=0}^{M-1} (\|\theta_d^{n_{k+1}}\|_0^2 - \|\theta_d^{n_k}\|_0^2 + \|\theta_d^{n_{k+1}}-\theta_d^{n_k}\|_0^2)
+\frac{2\mu \Delta s}{\rho k_m}\sum_{k=0}^{M-1}\|e_m^{n_{k+1}}\|_0^2
+\frac{2\mu \Delta s}{\rho k_f}\sum_{k=0}^{M-1}\|e_f^{n_{k+1}}\|_0^2\\
&\frac{\sigma k_m \Delta s}{\rho \mu}\sum_{k=0}^{M-1}(\|\theta_m^{n_{k+1}}\|_0^2 - \|\theta_f^{n_k}\|_0^2 + \|\theta_m^{n_{k+1}} - \theta_f^{n_k}\|_0^2)
+\frac{\sigma k_m \Delta s}{\rho \mu}\sum_{k=0}^{M-1}(\|\theta_f^{n_{k+1}}\|_0^2 - \|\theta_m^{n_k}\|_0^2 +
\|\theta_f^{n_{k+1}}-\theta_m^{n_k}\|_0^2)\\
&+\frac{\gamma \Delta t}{2\rho h}\sum_{k=0}^{M-1}\sum_{n=n_k}^{n_{k+1}-1}\|e_f^{n_{k+1}} \cdot \boldsymbol{n}_d\|_{L^2(\mathbb{I})}^2\\
&\leq
\Delta s \sum_{k=0}^{M-1}(\frac{\eta_f C_{ft}}{\rho}\|\theta_f^{n_{k+1}}\|_0^2
+ \frac{\eta_m C_{mt}}{\rho}\|\theta_m^{n_{k+1}}\|_0^2
+ \frac{8 \tilde{C}_{inv}^2}{\rho \gamma}\|\theta_f^{n_k}\|_0^2)
+ \frac{\gamma \Delta t}{\rho h}\sum_{k=0}^{M-1}\sum_{n=n_k}^{n_{k+1}-1}\|e_c^{n+1}\|_{L^2(\mathbb{I})}^2\\
&+\frac{2\eta_f C_{ft}\Delta s^2}{3\rho}\int_{0}^{T} \bigg\|\frac{\partial^2 \phi_f}{\partial t^2} (t) \bigg\|_0^2 \mathrm{d}t
+\frac{2\eta_f C_{ft}}{\rho} \int_{0}^{T} \bigg\|(P^h - I) \frac{\partial \phi_f}{\partial t}(t)\bigg\|_0^2 \mathrm{d}t \\
&+ \frac{2\eta_m C_{mt}\Delta s^2}{3\rho}\int_{0}^{T} \bigg\|\frac{\partial^2 \phi_m}{\partial t^2}(t) \bigg\|_0^2 \mathrm{d}t
+\frac{2\eta_m C_{mt}}{\rho} \int_{0}^{T} \bigg\|(P^h - I) \frac{\partial \phi_m}{\partial t}(t)\bigg\|_0^2 \mathrm{d}t \\
&+\frac{4 \gamma r^2 C_T^2 \Delta t^2}{\rho h} \int_{0}^{T} \bigg\| \frac{\partial P^h \boldsymbol{u}_c}{\partial t}\bigg\|_1^2 \mathrm{d}t
+\frac{2 \sigma^2 k_m^2 \Delta s^2}{\eta_{f} C_{ft} \rho \mu^2}\int_{0}^{T}\bigg\|\frac{\partial P^h \phi_m}{\partial t}(t)\bigg\|_0^2 \mathrm{d}t
+\frac{2 \sigma^2 k_m^2 \Delta s^2}{\eta_{m} C_{mt}\rho \mu^2} \int_{0}^{T}\bigg\| \frac{\partial P^h \phi_{f}}{\partial t}(t) \bigg\|_0^2 \mathrm{d}t\\
&+\frac{8C_T^2 h \Delta s^2}{\rho \gamma}\int_{0}^{T} \bigg\| \frac{\partial P^h \phi_f}{\partial t}\bigg\|_1^2 \mathrm{d}t.
\end{align*}
By the discrete~Gronwall~lemma, when~$\kappa_n \Delta t \leq \frac{1}{2}, \kappa_n=\kappa_1+\kappa_2+\kappa_3, \kappa_1=\frac{\gamma}{\rho h}, \kappa_2=\frac{\eta_f C_{ft}}{\rho}+\frac{8 C_{inv}^2}{\rho \gamma}, \kappa_3=\frac{\eta_m C_{mt}}{\rho}$, we have
\begin{equation*}
\begin{split}
&\frac{\eta_d C_{dt}}{\rho}\sum_{k=0}^{M-1} (\|\theta_d^{n_{k+1}}\|_0^2 - \|\theta_d^{n_k}\|_0^2 + \|\theta_d^{n_{k+1}}-\theta_d^{n_k}\|_0^2)
+\frac{2\mu \Delta s}{\rho k_m}\sum_{k=0}^{M-1}\|e_m^{n_{k+1}}\|_0^2
+\frac{2\mu \Delta s}{\rho k_f}\sum_{k=0}^{M-1}\|e_f^{n_{k+1}}\|_0^2\\
&\frac{\sigma k_m \Delta s}{\rho \mu}\sum_{k=0}^{M-1}(\|\theta_m^{n_{k+1}}\|_0^2 - \|\theta_f^{n_k}\|_0^2 + \|\theta_m^{n_{k+1}} - \theta_f^{n_k}\|_0^2)
+\frac{\sigma k_m \Delta s}{\rho \mu}\sum_{k=0}^{M-1}(\|\theta_f^{n_{k+1}}\|_0^2 - \|\theta_m^{n_k}\|_0^2 +
\|\theta_f^{n_{k+1}}-\theta_m^{n_k}\|_0^2)\\
&+\frac{\gamma \Delta t}{2\rho h}\sum_{k=0}^{M-1}\sum_{n=n_k}^{n_{k+1}-1}\|e_f^{n_{k+1}} \cdot \boldsymbol{n}_d \|_{L^2(\mathbb{I})}^2\\
&\leq C(h^4 + \Delta t^2 + \Delta t^2 h^{-1}).
\end{split}
\end{equation*}
Especially, when~$k=0$, $\phi_f^0 =P^h \phi_{f0}, \phi_m^0 = P^h \phi_{m0}$,
\begin{equation}\label{tebie}
\begin{split}
&\frac{2\eta_d C_{dt}}{\rho}\|\theta_d^{n_{1}}\|_0^2
+\frac{2\mu \Delta s}{\rho k_m}\|e_m^{n_{1}}\|_0^2
+\frac{2\mu \Delta s}{\rho k_f}\|e_f^{n_{1}}\|_0^2
+\frac{\gamma \Delta t}{2\rho h}\|e_f^{n_{1}}\|_{L^2(\mathbb{I})}^2
+\frac{2\sigma k_m \Delta s}{\rho \mu}(\|\theta_m^{n_{1}}\|_0^2 + \|\theta_f^{n_{1}}\|_0^2)\\
&\leq C(h^4 + \Delta t^2 + \Delta t^2 h^{-1}).
\end{split}
\end{equation}

Next, we give the proof of~\eqref{THMP}~by mathematical induction in time interval~$[0,T]$. When~$l=0$,
by~\eqref{tebie}~and~\eqref{THMP1}~to know
\begin{equation*}
\begin{split}
&\|e_c^{n_{1}}\|_0^2 + \frac{\eta_f C_{ft}}{\rho} \|\theta_f^{n_{1}}\|_0^2
+ \frac{\eta_m C_{mt}}{\rho} \|\theta_m^{n_{1}}\|_0^2
+\nu \Delta t \sum_{n=1}^{n_{1}}\|\nabla e_c^{n}\|_0^2
+ \frac{2 \mu \Delta s}{\rho k_f}\|e_f^{n_{1}}\|_0^2
+ \frac{2 \mu \Delta s}{\rho k_m}\|e_m^{n_{1}}\|_0^2\\
&\leq C(h^4 + \Delta t^2 + \Delta t^2 h^{-1}).
\end{split}
\end{equation*}
Assume that~\eqref{THMP}~holds for~$l=j(1 \leq j \leq M-2)$, i.e.,
\begin{align}
&\max_{1 \leq j \leq M-2} \bigg\{ \|e_c^{n_{j+1}}\|_0^2 + \frac{\eta_f C_{ft}}{\rho} \|\theta_f^{n_{j+1}}\|_0^2
+ \frac{\eta_m C_{mt}}{\rho} \|\theta_m^{n_{j+1}}\|_0^2 \bigg\}
+\nu \Delta t \sum_{k=0}^j \sum_{n=n_k}^{n_{k+1}-1}\|\nabla e_c^{n+1}\|_0^2 \nonumber\\
&+ \frac{2\mu \Delta s}{\rho k_f}\sum_{k=0}^j \|e_f^{n_{k+1}}\|_0^2
+ \frac{2\mu \Delta s}{\rho k_m}\sum_{k=0}^j \|e_m^{n_{k+1}}\|_0^2
\leq C(h^4 + \Delta t^2 + \Delta t^2 h^{-1}).\label{THMP2}
\end{align}
Using inverse inequality~\eqref{inverse2}, the properties of~\eqref{Assume}, \eqref{THMP1}, the regularity of (??)and imbedding theorem, when~$\Delta t < \tau_1, h < h_1$~and~$\Delta t =\mathcal{O}(h^2)$, we have~$n=n_k+1,...,n_{k+1}, 1 \leq k \leq j$,
\begin{equation}\label{Bound}
\begin{split}
\|\boldsymbol{u}_c^n\|_{L^{\infty}} &\leq \|e_c^n\|_{L^{\infty}} + \|P^h \boldsymbol{u}_c(t_n)\|_{L^{\infty}}
\leq C h^{-3/2} \|e_c^n\|_0 + C \|\boldsymbol{u}_c(t_n)\|_2\\
&\leq C(h^{1/2} + h^{-3/2}\Delta t + h^{-2} \Delta t) + C \leq C_B.
\end{split}
\end{equation}
and
\begin{equation}\label{Boundd}
\begin{split}
\Delta t \|\boldsymbol{u}_c^n\|_{W^{1,\infty}} &\leq \Delta t(\|e_c^n\|_{W^{1,\infty}} + \|P^h \boldsymbol{u}_c(t_n)\|_{W^{1,\infty}})\\
&\leq \Delta t (Ch^{-3/2}\|\nabla e_c^n\|_0 + C \|\boldsymbol{u}(t_n)\|_{W^{2,d^{\ast}}})\\
&\leq C (\Delta t^{1/2} h^{1/2} + \Delta t^{3/2} h^{-3/2} + \Delta t^{3/2} h^{-2} + \Delta t)\\
&\leq \frac{1}{4},
\end{split}
\end{equation}

When~$l=j+1$, taking~$(\boldsymbol{v}_c^h,q^h;\boldsymbol{v}_f^h,\psi_f^h;\boldsymbol{v}_m^h,\psi_m^h)=(2\Delta t e_c^{n+1},2\Delta t \delta_c^{n+1};2 \Delta s e_f^{n_{k+1}}, 2\Delta s \theta_f^{n_{k+1}};$\\
$ 2\Delta s e_m^{n_{k+1}}, 2\Delta s \theta_m^{n_{k+1}})$, combining~\eqref{SS1}, \eqref{SS2}~and~\eqref{SS3}, and summing over~$n=n_k, n_k+1,...,n_{k+1}-1$, we have
\begin{align}\label{T}
&\|e_c^{n_{k+1}}\|_0^2 - \|e_c^{n_k}\|_0^2 + \sum_{n=n_k}^{n_{k+1}-1} \|e_c^{n+1}-e_c^n\|_0^2
+\frac{\eta_d C_{dt}}{\rho}(\|\theta_d^{n_{k+1}}\|_0^2-\|\theta_d^{n_k}\|_0^2+\|\theta_d^{n_{k+1}}-\theta_d^{n_k}\|_0^2) \nonumber\\
&+\frac{\sigma k_m \Delta s}{\rho \mu}(\|\theta_f^{n_{k+1}}\|_0^2-\|\theta_m^{n_k}\|_0^2+\|\theta_f^{n_{k+1}}-\theta_m^{n_k}\|_0^2)
+\frac{\sigma k_m \Delta s}{\rho \mu}(\|\theta_m^{n_{k+1}}\|_0^2-\|\theta_f^{n_k}\|_0^2+\|\theta_m^{n_{k+1}}-\theta_f^{n_k}\|_0^2) \nonumber\\
&+2\nu \Delta t \sum_{n=n_k}^{n_{k+1}-1}\|\nabla e_c^{n+1}\|_0^2 + \frac{2\alpha \nu \Delta t}{\sqrt{k}} \sum_{n=n_k}^{n_{k+1}-1}\|P_{\tau}(e_c^{n+1})\|_{L^2(\mathbb{I})}^2
+\frac{2\mu \Delta s}{\rho k_f}\|e_f^{n_{k+1}}\|_0^2 + \frac{2\mu \Delta s}{\rho k_m}\|e_m^{n_{k+1}}\|_0^2 \nonumber\\
&=2\Delta t \sum_{n=n_k}^{n_{k+1}-1}\bigg( \Psi_{c,t}^{n+1}
+\frac{\boldsymbol{u}_c^n - \hat{\boldsymbol{u}}_c^n-\boldsymbol{u}_c(t_n)+\bar{\boldsymbol{u}}_c(t_n)}{\Delta t}+R_{tr}^{n+1}, e_c^{n+1}\bigg)_{\Omega_c}
+\frac{\eta_d C_{dt}}{\rho}\bigg(\Psi_{d,s}^{n_{k+1}},2\Delta s \theta_d^{n_{k+1}}\bigg)_{\Omega_d} \nonumber\\
&+\bigg[\frac{2\Delta t}{\rho}\sum_{n=n_k}^{n_{k+1}-1}\int_{\mathbb{I}} (P^h \phi_f(t_{n+1})-\phi_f^{n_k} ) e_c^{n+1} \cdot \boldsymbol{n}_d \mathrm{d}s
-\frac{2\Delta s}{\rho}\int_{\mathbb{I}} (P^h \phi_f(t_{n_{k+1}})-\phi_f^{n_k})e_f^{n_{k+1}}\cdot \boldsymbol{n}_d \mathrm{d}s \bigg] \nonumber\\
&+\bigg[ \frac{2\gamma \Delta s}{\rho h}\int_{\mathbb{I}}\big( (P^h \boldsymbol{u}_c(t_{n_{k+1}})-S^{n_{k+1}})-e_f^{n_{k+1}}\big)\cdot \boldsymbol{n}_d e_f^{n_{k+1}} \cdot \boldsymbol{n}_d \mathrm{d}s \nonumber\\
&~~~~-\frac{2\gamma \Delta t}{\rho h}\sum_{n=n_k}^{n_{k+1}-1}\int_{\mathbb{I}}\big( e_c^{n+1}-(P^h \boldsymbol{u}_f(t_{n+1})-\boldsymbol{u}_f^{n_k}) \big)\cdot \boldsymbol{n}_d e_c^{n+1}\cdot \boldsymbol{n}_d \mathrm{d}s \bigg] \nonumber\\
&+\bigg[ \frac{2\sigma k_m \Delta s}{\rho \mu}(P^h \phi_m(t_{n_{k+1}})-P^h \phi_m(t_{n_k}),\theta_f^{n_{k+1}})_{\Omega_d}
+\frac{2\sigma k_m \Delta s}{\rho \mu}(P^h \phi_f(t_{n_{k+1}})-P^h \phi_f(t_{n_k}),\theta_m^{n_{k+1}})_{\Omega_d} \bigg] \nonumber\\
&:=\sum_{i=1}^5 T_i.
\end{align}
For the first term
\begin{equation*}
\begin{split}
T_1&=2\Delta t \sum_{n=n_k}^{n_{k+1}-1}(\Psi_{c,t}^{n+1},e_c^{n+1})_{\Omega_c}
+ 2\Delta t \sum_{n=n_k}^{n_{k+1}-1} \bigg(\frac{\boldsymbol{u}_c^n - \hat{\boldsymbol{u}}_c^n-\boldsymbol{u}_c(t_n)+\bar{\boldsymbol{u}}_c(t_n)}{\Delta t},e_c^{n+1}\bigg)_{\Omega_c}\\
&~~~~+ 2\Delta t \sum_{n=n_k}^{n_{k+1}-1} (R_{tr}^{n+1}, e_c^{n+1})_{\Omega_c}\\
&:=T_{11}+T_{12}+T_{13}.
\end{split}
\end{equation*}
We repeat the same procedure in~$A_1$, by the boundedness of~\eqref{Bound}~and~\eqref{Boundd},
\begin{equation*}
\begin{split}
T_1 &\leq \frac{\nu \Delta t}{2}\sum_{n=n_k}^{n_{k+1}-1} (\|\nabla e_c^{n+1}\|_0^2+\|\nabla e_c^n\|_0^2)
+ \frac{2 C_N^2 C_{PF}^2 C_B^2 \Delta t}{\nu} \sum_{n=n_k}^{n_{k+1}-1} \|e_c^{n+1}\|_0^2\\
&+ \frac{30 C_N^2 C_S^2 C_{PF}^2 C_H^2 \Delta t}{\nu} \sum_{n=n_k}^{n_{k+1}-1} \|e_c^n\|_0^2
+\frac{6C_{PF}^2 \Delta t}{\nu} \sum_{n=n_k}^{n_{k+1}-1} \|R_{tr}^{n+1}\|_0^2\\
&+\frac{6C_{PF}^2}{\nu}\int_{t_{n_k}}^{t_{n_{k+1}}} \bigg\|(P^h-I)\frac{\partial \boldsymbol{u}_c}{\partial t}(t)\bigg\|_0^2 \mathrm{d}t
+Ch^4 \Delta t.
\end{split}
\end{equation*}
For the second term, similar with the deduce of~$B_1$, by~Cauchy-Schwarz~inequality and Young~inequality
\begin{equation*}
\begin{split}
T_2 &\leq \frac{2\eta_f C_{ft}\Delta s^2}{3\rho}\int_{t_{n_k}}^{t_{n_{k+1}}} \bigg\|\frac{\partial^2 \phi_f}{\partial t^2} (t) \bigg\|_0^2 \mathrm{d}t
+\frac{2\eta_f C_{ft}}{\rho} \int_{t_{n_k}}^{t_{n_{k+1}}} \bigg\|(P^h - I) \frac{\partial \phi_f}{\partial t}(t)\bigg\|_0^2 \mathrm{d}t \\
&+ \frac{2\eta_m C_{mt}\Delta s^2}{3\rho}\int_{t_{n_k}}^{t_{n_{k+1}}} \bigg\|\frac{\partial^2 \phi_m}{\partial t^2}(t) \bigg\|_0^2 \mathrm{d}t
+\frac{2\eta_m C_{mt}}{\rho} \int_{t_{n_k}}^{t_{n_{k+1}}} \bigg\|(P^h - I) \frac{\partial \phi_m}{\partial t}(t)\bigg\|_0^2 \mathrm{d}t \\
&+\frac{\eta_f C_{ft} \Delta s}{2\rho} \|\theta_f^{n_{k+1}}\|_0^2
+\frac{\eta_m C_{mt} \Delta s}{2\rho} \|\theta_m^{n_{k+1}}\|_0^2.
\end{split}
\end{equation*}
For the third term
\begin{align*}
T_3 &= \frac{2\Delta t}{\rho}\sum_{n=n_k}^{n_{k+1}-1}\int_{\mathbb{I}} (P^h \phi_f(t_{n+1})-\phi_f^{n_k} ) e_c^{n+1} \cdot \boldsymbol{n}_d \mathrm{d}s
-\frac{2\Delta s}{\rho}\int_{\mathbb{I}} (P^h \phi_f(t_{n_{k+1}})-\phi_f^{n_k})e_f^{n_{k+1}}\cdot \boldsymbol{n}_d \mathrm{d}s\\
&= \frac{2 \Delta t}{\rho} \sum_{n=n_k}^{n_{k+1}-1} \int_{\mathbb{I}} \big( P^h \phi_f(t_{n+1})- P^h \phi_f(t_{n_{k+1}}) \big) e_c^{n+1} \cdot \boldsymbol{n}_d \mathrm{d}s \\
&+ \frac{2\Delta t}{\rho} \sum_{n=n_k}^{n_{k+1}-1} \int_{\mathbb{I}} \theta_f^{n_k} (e_c^{n+1}-e_f^{n_{k+1}}) \cdot \boldsymbol{n}_d \mathrm{d}s\\
&+ \frac{2\Delta t}{\rho} \sum_{n=n_k}^{n_{k+1}-1} \int_{\mathbb{I}} \big( P^h \phi_f(t_{n_{k+1}})-P^h \phi_f(t_{n_k}) \big) (e_c^{n+1} - e_f^{n_{k+1}}) \cdot \boldsymbol{n}_d \mathrm{d}s\\
&:= T_{31} + T_{32} + T_{33}.
\end{align*}
Using~Cauchy-Schwarz inequality, the trace inverse inequality and the~Young~inequality, we arrive at
\begin{equation*}
\begin{split}
T_{31} &=\frac{2 \Delta t}{\rho} \sum_{n=n_k}^{n_{k+1}-1} \int_{\mathbb{I}} \big( P^h \phi_f(t_{n+1})- P^h \phi_f(t_{n_{k+1}}) \big) e_c^{n+1} \cdot \boldsymbol{n}_d \mathrm{d}s\\
&\leq \frac{2 \tilde{C}_{inv}^2 r h^{-1}\Delta t}{\rho} \sum_{n=n_k}^{n_{k+1}-1} \|P^h \phi_f(t_{n+1})- P^h \phi_f(t_{n})\|_0 \|e_c^{n+1}\|_0\\
&\leq \frac{\tilde{C}_{inv}^2 \Delta t}{h}\sum_{n=n_k}^{n_{k+1}-1} \|e_c^{n+1}\|_0^2
+ \frac{\tilde{C}_{inv}^2 r^2 \Delta t^2}{\rho^2 h} \int_{t_{n_k}}^{t_{n_{k+1}}} \bigg\| \frac{\partial P^h \phi_f}{ \partial t}(t) \bigg\|_0^2 \mathrm{d}t,
\end{split}
\end{equation*}

\begin{equation*}
\begin{split}
T_{32} &= \frac{2\Delta t}{\rho} \sum_{n=n_k}^{n_{k+1}-1} \int_{\mathbb{I}} \theta_f^{n_k} (e_c^{n+1}-e_f^{n_{k+1}}) \cdot \boldsymbol{n}_d \mathrm{d}s \\
&\leq \frac{2 \tilde{C}_{inv} h^{-1/2} \Delta t}{\rho} \sum_{n=n_k}^{n_{k+1}-1} \|\theta_f^{n_k}\|_0 \|(e_c^{n+1}-e_f^{n_{k+1}}) \cdot \boldsymbol{n}_d\|_{L^2(\mathbb{I})} \\
&\leq \frac{4 \tilde{C}_{inv}^2 \Delta s}{\rho \gamma} \|\theta_f^{n_k}\|_0^2
+ \frac{\gamma \Delta t}{4 \rho h} \sum_{n=n_k}^{n_{k+1}-1} \|(e_c^{n+1}-e_f^{n_{k+1}}) \cdot \boldsymbol{n}_d\|_{L^2(\mathbb{I})}^2,
\end{split}
\end{equation*}
and
\begin{equation*}
\begin{split}
T_{33} &= \frac{2\Delta t}{\rho} \sum_{n=n_k}^{n_{k+1}-1} \int_{\mathbb{I}} \big( P^h \phi_f(t_{n_{k+1}})-P^h \phi_f(t_{n_k}) \big) (e_c^{n+1} - e_f^{n_{k+1}}) \cdot \boldsymbol{n}_d \mathrm{d}s\\
&\leq \frac{2 \tilde{C}_{inv} h^{-1/2} \Delta t}{\rho} \sum_{n=n_k}^{n_{k+1}-1} \|P^h \phi_f(t_{n_{k+1}})-P^h \phi_f(t_{n_k})\|_0 \|(e_c^{n+1} - e_f^{n_{k+1}}) \cdot \boldsymbol{n}_d\|_{L^2(\mathbb{I})}\\
&\leq \frac{4 r \tilde{C}_{inv}^2 \Delta t^2}{\rho \gamma}\int_{t_{n_k}}^{t_{n_{k+1}}} \bigg\| \frac{\partial P^h \phi_f}{\partial t}(t)\bigg\|_0^2 \mathrm{d}t
+\frac{\gamma \Delta t}{4\rho h}\sum_{n=n_k}^{n_{k+1}-1}\|(e_c^{n+1}-e_f^{n_{k+1}}) \cdot \boldsymbol{n}_d\|_{L^2(\mathbb{I})}^2.
\end{split}
\end{equation*}
Therefore,
\begin{equation*}
\begin{split}
T_3 &\leq \frac{\tilde{C}_{inv}^2 \Delta t}{h}\sum_{n=n_k}^{n_{k+1}-1} \|e_c^{n+1}\|_0^2
+\frac{4 \tilde{C}_{inv}^2 \Delta s}{\rho \gamma}\|\theta_f^{n_k}\|_0^2
+\frac{\gamma \Delta t}{2\rho h}\sum_{n=n_k}^{n_{k+1}-1}\|(e_c^{n+1}-e_f^{n_{k+1}}) \cdot \boldsymbol{n}_d\|_{L^2(\mathbb{I})}^2\\
&+\frac{4 r \tilde{C}_{inv}^2 \Delta t^2}{\rho \gamma}\int_{t_{n_k}}^{t_{n_{k+1}}} \bigg\| \frac{\partial P^h \phi_f}{\partial t}(t)\bigg\|_0^2 \mathrm{d}t
+ \frac{\tilde{C}_{inv}^2 r^2 \Delta t^2}{\rho^2 h} \int_{t_{n_k}}^{t_{n_{k+1}}} \bigg\| \frac{\partial P^h \phi_f}{ \partial t}(t) \bigg\|_0^2 \mathrm{d}t.
\end{split}
\end{equation*}
For the fourth term, since~$S^{n_{k+1}}=\frac{1}{r}\sum_{n=n_k}^{n_{k+1}-1} \boldsymbol{u}_c^{n+1}$, then
\begin{equation*}
\begin{split}
T_4 &=\frac{2\gamma \Delta s}{\rho h}\int_{\mathbb{I}}\big( (P^h \boldsymbol{u}_c(t_{n_{k+1}})-S^{n_{k+1}})-e_f^{n_{k+1}}\big)\cdot \boldsymbol{n}_d e_f^{n_{k+1}} \cdot \boldsymbol{n}_d \mathrm{d}s\\
&~~~~-\frac{2\gamma \Delta t}{\rho h}\sum_{n=n_k}^{n_{k+1}-1}\int_{\mathbb{I}}\big( e_c^{n+1}-(P^h \boldsymbol{u}_f(t_{n+1})-\boldsymbol{u}_f^{n_k}) \big)\cdot \boldsymbol{n}_d e_c^{n+1}\cdot \boldsymbol{n}_d \mathrm{d}s \\
&=\frac{2 \gamma \Delta t}{\rho h} \sum_{n=n_k}^{n_{k+1}-1} \int_{\mathbb{I}} \big( P^h \boldsymbol{u}_c(t_{n_{k+1}}) - P^h \boldsymbol{u}_c(t_{n+1}) + e_c^{n+1} - e_f^{n_{k+1}}\big) \cdot \boldsymbol{n}_d e_f^{n_{k+1}} \cdot \boldsymbol{n}_d \mathrm{d}s \\
&~~~~-\frac{2\gamma \Delta t}{\rho h} \sum_{n=n_k}^{n_{k+1}-1} \int_{\mathbb{I}} (e_c^{n+1} - e_f^{n_{k+1}}) \cdot \boldsymbol{n}_d e_c^{n+1} \cdot \boldsymbol{n}_d \mathrm{d}s \\
&~~~~-\frac{2 \gamma \Delta t}{\rho h} \sum_{n=n_k}^{n_{k+1}-1} \int_{\mathbb{I}} \big( e_f^{n_{k+1}} -
(P^h \boldsymbol{u}_f(t_{n+1}) - \boldsymbol{u}_f^{n_k})\big) \cdot \boldsymbol{n}_d e_c^{n+1} \cdot \boldsymbol{n}_d \mathrm{d}s \\
&= \frac{2 \gamma \Delta t}{\rho h} \sum_{n=n_k}^{n_{k+1}-1} \int_{\mathbb{I}} \big( P^h \boldsymbol{u}_c(t_{n_{k+1}}) - P^h \boldsymbol{u}_c(t_{n+1}) \big) \cdot \boldsymbol{n}_d e_f^{n_{k+1}} \cdot \boldsymbol{n}_d \mathrm{d}s \\
&~~~~+ \frac{2 \gamma \Delta t}{\rho h} \sum_{n=n_k}^{n_{k+1}-1} \int_{\mathbb{I}} \big( P^h \boldsymbol{u}_f(t_{n+1}) - P^h \boldsymbol{u}_f(t_{n_k})\big) \cdot \boldsymbol{n}_d e_c^{n+1} \cdot \boldsymbol{n}_d \mathrm{d}s \\
&~~~~-\bigg[ \frac{2 \gamma \Delta t}{\rho h}\sum_{n=n_k}^{n_{k+1}-1} \|(e_c^{n+1}-e_f^{n_{k+1}}) \cdot \boldsymbol{n}_d\|_{L^2(\mathbb{I})}^2
+ \frac{2 \gamma \Delta t}{\rho h} \sum_{n=n_k}^{n_{k+1}-1} \int_{\mathbb{I}} (e_f^{n_{k+1}} - e_f^{n_k}) \cdot \boldsymbol{n}_d e_c^{n+1} \cdot \boldsymbol{n}_d \mathrm{d}s \bigg] \\
&:= T_{41} + T_{42} + T_{43}.
\end{split}
\end{equation*}
Applying~Cauchy-Schwarz~inequality and Young inequality, we show that
\begin{align*}
T_{41} &= \frac{2 \gamma \Delta t}{\rho h} \sum_{n=n_k}^{n_{k+1}-1} \int_{\mathbb{I}} \big( P^h \boldsymbol{u}_c(t_{n_{k+1}}) - P^h \boldsymbol{u}_c(t_{n+1}) \big) \cdot \boldsymbol{n}_d e_f^{n_{k+1}} \cdot \boldsymbol{n}_d \mathrm{d}s \\
&\leq \frac{2 \gamma C_T \Delta t}{\rho h} \sum_{n=n_k}^{n_{k+1}-1} \|P^h \boldsymbol{u}_c(t_{n_{k+1}})- P^h \boldsymbol{u}_c(t_{n+1})\|_1 \|(e_c^{n+1}-e_f^{n_{k+1}}) \cdot \boldsymbol{n}_d\|_{L^2(\mathbb{I})}\\
&~~~~+\frac{r^2 \gamma^2 C_T^2  \Delta t^2}{\rho^2 h} \int_{t_{n_k}}^{t_{n_{k+1}}} \bigg\|\frac{\partial P^h \boldsymbol{u}_c}{\partial t}(t)\bigg\|_1^2 \mathrm{d}t
+ \frac{C_{div}^2 \Delta t}{h}\sum_{n=n_k}^{n_{k+1}-1}\|e_c^{n+1}\|_0^2\\
&\leq \frac{4 \gamma C_T^2 \Delta t^2}{\rho h} \int_{t_{n_k}}^{t_{n_{k+1}}} \bigg\|\frac{\partial P^h \boldsymbol{u}_c}{\partial t}(t)\bigg\|_1^2 \mathrm{d}t
+ \frac{\gamma \Delta t}{4 \rho h} \sum_{n=n_k}^{n_{k+1}-1}\|(e_c^{n+1}-e_f^{n_{k+1}}) \cdot \boldsymbol{n}_d\|_{L^2(\mathbb{I})}^2 \\
&~~~~+\frac{r^2 \gamma^2 C_T^2  \Delta t^2}{\rho^2 h} \int_{t_{n_k}}^{t_{n_{k+1}}} \bigg\|\frac{\partial P^h \boldsymbol{u}_c}{\partial t}(t)\bigg\|_1^2 \mathrm{d}t
+ \frac{C_{div}^2 \Delta t}{h}\sum_{n=n_k}^{n_{k+1}-1}\|e_c^{n+1}\|_0^2.
\end{align*}
By~H$\mathrm{\ddot{o}}$lder~inequality, the general trace inequality, the Young inequality, the property of~$H(\mathrm{div})$~space and the divergence free condition,
\begin{equation*}
\begin{split}
T_{42} &= \frac{2 \gamma \Delta t}{\rho h} \sum_{n=n_k}^{n_{k+1}-1} \int_{\mathbb{I}} \big( P^h \boldsymbol{u}_f(t_{n+1}) - P^h \boldsymbol{u}_f(t_{n_k})\big) \cdot \boldsymbol{n}_d e_c^{n+1} \cdot \boldsymbol{n}_d \mathrm{d}s \\
&= \frac{2 \gamma \Delta t}{\rho h} \sum_{n=n_k}^{n_{k+1}-1} \int_{\partial\Omega_c} \big( P^h \boldsymbol{u}_f(t_{n+1}) - P^h \boldsymbol{u}_f(t_{n_k})\big) \cdot \boldsymbol{n}_d e_c^{n+1} \cdot \boldsymbol{n}_d \mathrm{d}s \\
&\leq \frac{2 \gamma \Delta t}{\rho h} \sum_{n=n_k}^{n_{k+1}-1} \|\big( P^h \boldsymbol{u}_f(t_{n+1}) - P^h \boldsymbol{u}_f(t_{n_k})\big) \cdot \boldsymbol{n}_d\|_{H^{\frac{1}{2}}(\partial \Omega_c)} \|e_c^{n+1} \cdot \boldsymbol{n}_d\|_{H^{-\frac{1}{2}}(\partial \Omega_c)}\\
&\leq \frac{2r\gamma C_T C_{div}\Delta t}{\rho h}\sum_{n=n_k}^{n_{k+1}-1}\|P^h \boldsymbol{u}_f(t_{n+1})-P^h \boldsymbol{u}_f(t_n)\|_1 \|e_c^{n+1}\|_{H(\mathrm{div})}\\
&\leq \frac{r^2 \gamma^2 C_T^2  \Delta t^2}{\rho^2 h} \int_{t_{n_k}}^{t_{n_{k+1}}} \bigg\|\frac{\partial P^h \boldsymbol{u}_f}{\partial t}(t)\bigg\|_1^2 \mathrm{d}t
+ \frac{C_{div}^2 \Delta t}{h}\sum_{n=n_k}^{n_{k+1}-1}\|e_c^{n+1}\|_0^2.
\end{split}
\end{equation*}
And
\begin{equation*}
\begin{split}
T_{43} &=- \frac{2 \gamma \Delta t}{\rho h}\sum_{n=n_k}^{n_{k+1}-1} \|(e_c^{n+1}-e_f^{n_{k+1}}) \cdot \boldsymbol{n}_d\|_{L^2(\mathbb{I})}^2
- \frac{2 \gamma \Delta t}{\rho h} \sum_{n=n_k}^{n_{k+1}-1} \int_{\mathbb{I}} (e_f^{n_{k+1}} - e_f^{n_k}) \cdot \boldsymbol{n}_d e_c^{n+1} \cdot \boldsymbol{n}_d \mathrm{d}s\\
&= - \frac{2 \gamma \Delta t}{\rho h}\sum_{n=n_k}^{n_{k+1}-1} \|(e_c^{n+1}-e_f^{n_{k+1}}) \cdot \boldsymbol{n}_d\|_{L^2(\mathbb{I})}^2
-\frac{2\gamma \Delta s}{\rho h} \|e_f^{n_{k+1}} \cdot \boldsymbol{n}_d\|_{L^2(\mathbb{I})}^2 \\
&~~~~+ \frac{2 \gamma \Delta t}{\rho h} \sum_{n=n_k}^{n_{k+1}-1} \int_{\mathbb{I}} e_f^{n_k} \cdot \boldsymbol{n}_d e_f^{n_{k+1}} \cdot \boldsymbol{n}_d \mathrm{d}s
+ \frac{2 \gamma \Delta t}{\rho h} \sum_{n=n_k}^{n_{k+1}-1} \int_{\mathbb{I}} (e_f^{n_{k+1}}- e_f^{n_k}) \cdot \boldsymbol{n}_d (e_f^{n_{k+1}}-e_c^{n+1}) \cdot \boldsymbol{n}_d \mathrm{d}s \\
&\leq -\frac{\gamma \Delta t}{\rho h} \sum_{n=n_k}^{n_{k+1}-1} \|(e_c^{n+1} - e_f^{n_{k+1}})\cdot \boldsymbol{n}_d\|_{L^2(\mathbb{I})}^2 - \frac{\gamma \Delta s}{\rho h} \bigg[ \|e_f^{n_{k+1}} \cdot \boldsymbol{n}_d\|_{L^2(\mathbb{I})}^2 - \|e_f^{n_k} \cdot \boldsymbol{n}_d\|_{L^2(\mathbb{I})}^2 \bigg].
\end{split}
\end{equation*}
Therefore,
\begin{align*}
T_4 &\leq - \frac{ \gamma \Delta s}{ \rho h} \bigg[ \|e_f^{n_{k+1}} \cdot \boldsymbol{n}_d\|_{L^2(\mathbb{I})}^2 - \|e_f^{n_k} \cdot \boldsymbol{n}_d\|_{L^2(\mathbb{I})}^2 \bigg]
-\frac{3 \gamma \Delta t}{4 \rho h}\sum_{n=n_k}^{n_{k+1}-1}\|(e_c^{n+1}-e_f^{n_{k+1}})\cdot \boldsymbol{n}_d\|_{L^2(\mathbb{I})}^2\\
&+ \frac{4 \gamma C_T^2 \Delta t^2}{\rho h} \int_{t_{n_k}}^{t_{n_{k+1}}} \bigg\|\frac{\partial P^h \boldsymbol{u}_c}{\partial t}(t)\bigg\|_1^2 \mathrm{d}t
+ \frac{r^2 \gamma^2 C_T^2  \Delta t^2}{\rho^2 h} \int_{t_{n_k}}^{t_{n_{k+1}}} \bigg\|\frac{\partial P^h \boldsymbol{u}_c}{\partial t}(t)\bigg\|_1^2 \mathrm{d}t\\
&+ \frac{r^2 \gamma^2 C_T^2  \Delta t^2}{\rho^2 h} \int_{t_{n_k}}^{t_{n_{k+1}}} \bigg\|\frac{\partial P^h \boldsymbol{u}_f}{\partial t}(t)\bigg\|_1^2 \mathrm{d}t
+ \frac{2 C_{div}^2 \Delta t}{h}\sum_{n=n_k}^{n_{k+1}-1}\|e_c^{n+1}\|_0^2.
\end{align*}
For the fifth term
\begin{equation*}
\begin{split}
T_5 &= \frac{2\sigma k_m \Delta s}{\rho \mu}(P^h \phi_m(t_{n_{k+1}})-P^h \phi_m(t_{n_k}),\theta_f^{n_{k+1}})_{\Omega_d}
+\frac{2\sigma k_m \Delta s}{\rho \mu}(P^h \phi_f(t_{n_{k+1}})-P^h \phi_f(t_{n_k}),\theta_m^{n_{k+1}})_{\Omega_d}\\
&:= T_{51} + T_{52}.
\end{split}
\end{equation*}
$T_{51}$~is bounded by Cauchy-Schwarz~inequality and the Young inequality,
\begin{equation*}
\begin{split}
T_{51}&=\frac{2\sigma k_m \Delta s}{\rho \mu}(P^h \phi_m(t_{n_{k+1}})-P^h \phi_m(t_{n_k}),\theta_f^{n_{k+1}})_{\Omega_d}\\
&\leq \frac{2 \sigma^2 k_m^2 \Delta s^2}{\eta_{f} C_{ft} \rho \mu^2}\int_{t_{n_k}}^{t_{n_{k+1}}}\bigg\|\frac{\partial P^h \phi_m}{\partial t}(t)\bigg\|_0^2 \mathrm{d}t
+ \frac{\eta_{f} C_{ft} \Delta s}{2 \rho}\|\theta_f^{n_{k+1}}\|_0^2.
\end{split}
\end{equation*}
In the same spirit,
\begin{equation*}
\begin{split}
T_{52}&=\frac{2\sigma k_m \Delta s}{\rho \mu}(P^h \phi_f(t_{n_{k+1}})-P^h \phi_f(t_{n_k}),\theta_m^{n_{k+1}})_{\Omega_d}\\
&\leq \frac{2 \sigma^2 k_m^2 \Delta s^2}{\eta_{m} C_{mt}\rho \mu^2} \int_{t_{n_k}}^{t_{n_{k+1}}}\bigg\| \frac{\partial P^h \phi_{f}}{\partial t}(t) \bigg\|_0^2 \mathrm{d}t
+\frac{\eta_{m} C_{mt} \Delta s}{2 \rho}\|\theta_m^{n_{k+1}}\|_0^2.
\end{split}
\end{equation*}
Therefore,
\begin{equation*}
\begin{split}
T_5 &\leq \frac{2 \sigma^2 k_m^2 \Delta s^2}{\eta_{f} C_{ft} \rho \mu^2}\int_{t_{n_k}}^{t_{n_{k+1}}}\bigg\|\frac{\partial P^h \phi_m}{\partial t}(t)\bigg\|_0^2 \mathrm{d}t
+ \frac{\eta_{f} C_{ft}\Delta s}{2 \rho}\|\theta_f^{n_{k+1}}\|_0^2\\
&+\frac{2 \sigma^2 k_m^2 \Delta s^2}{\eta_{m} C_{mt}\rho \mu^2} \int_{t_{n_k}}^{t_{n_{k+1}}}\bigg\| \frac{\partial P^h \phi_{f}}{\partial t}(t) \bigg\|_0^2 \mathrm{d}t
+\frac{\eta_{m} C_{mt} \Delta s}{2 \rho}\|\theta_m^{n_{k+1}}\|_0^2.
\end{split}
\end{equation*}
Combining~$T_1, T_2, T_3, T_4$~and~$T_5$, summing over~$k=0,...,j+1(1\leq j \leq M-2)$, we arrive at
\begin{align*}
&\|e_c^{n_{j+2}}\|_0^2 + \sum_{k=0}^{j+1} \sum_{n=n_k}^{n_{k+1}-1}\|e_c^{n+1}-e_c^n\|_0^2 + \frac{\eta_f C_{ft}}{\rho} \|\theta_f^{n_{j+2}}\|_0^2
+ \frac{\eta_m C_{mt}}{\rho} \|\theta_m^{n_{j+2}}\|_0^2
+\frac{\eta_f C_{ft}}{\rho}\sum_{k=0}^{j+1}\|\theta_f^{n_{k+1}} - \theta_f^{n_k}\|_0^2\\
&+\frac{\eta_m C_{mt}}{\rho}\sum_{k=0}^{j+1}\|\theta_m^{n_{k+1}} - \theta_m^{n_k}\|_0^2
+\frac{\sigma k_m \Delta s}{\rho \mu}\sum_{k=0}^{j+1}(\|\theta_f^{n_{k+1}}-\theta_m^{n_k}\|_0^2 + \|\theta_m^{n_{k+1}}-\theta_f^{n_k}\|_0^2)\\
&+\nu \Delta t \sum_{k=0}^{j+1} \sum_{n=n_k}^{n_{k+1}-1}\|\nabla e_c^{n+1}\|_0^2
+ \frac{2\alpha \nu \Delta t}{\sqrt{k}}\sum_{k=0}^{j+1} \sum_{n=n_k}^{n_{k+1}-1}\|P_{\tau}(e_c^{n+1})\|_{L^2(\mathbb{I})}^2
+ \frac{2 \mu \Delta s}{\rho k_f}\sum_{k=0}^{j+1}\|e_f^{n_{k+1}}\|_0^2\\
&+ \frac{2 \mu \Delta s}{\rho k_m}\sum_{k=0}^{j+1}\|e_m^{n_{k+1}}\|_0^2
+\frac{\gamma \Delta t}{4 \rho h}\sum_{k=0}^{j+1} \sum_{n=n_k}^{n_{k+1}-1} \|(e_c^{n+1}-e_f^{n_{k+1}})\cdot \boldsymbol{n}_d\|_{L^2(\mathbb{I})}^2
+\frac{\gamma \Delta s}{\rho h} \|e_f^{n_{j+2}} \cdot \boldsymbol{n}_d\|_{L^2(\mathbb{I})}^2\\
&\leq \frac{6C_{PF}^2 \Delta t}{\nu} \sum_{k=0}^{j+1} \sum_{n=n_k}^{n_{k+1}-1}\|R_{tr}^{n+1}\|_0^2
+ \frac{6C_{PF}^2}{\nu}\int_{0}^{T} \bigg\|(P^h-I)\frac{\partial \boldsymbol{u}_c}{\partial t}(t)\bigg\|_0^2 \mathrm{d}t
+\frac{r^2 \gamma^2 C_T^2  \Delta t^2}{\rho^2 h} \int_{t_{n_k}}^{t_{n_{k+1}}} \bigg\|\frac{\partial P^h \boldsymbol{u}_c}{\partial t}(t)\bigg\|_1^2 \mathrm{d}t\\
&+ \frac{2\eta_f C_{ft}\Delta s^2}{3\rho}\int_{0}^{T} \bigg\|\frac{\partial^2 \phi_f}{\partial t^2} (t)\bigg\|_0^2 \mathrm{d}t
+\frac{2\eta_f C_{ft}}{\rho} \int_{0}^{T} \bigg\|(P^h - I) \frac{\partial \phi_f}{\partial t}(t)\bigg\|_0^2 \mathrm{d}t
+ \frac{2\eta_m C_{mt}\Delta s^2}{3\rho}\int_{0}^{T} \bigg\|\frac{\partial^2 \phi_m}{\partial t^2} (t)\bigg\|_0^2 \mathrm{d}t\\
&+\frac{2\eta_m C_{mt}}{\rho} \int_{0}^{T} \bigg\|(P^h - I) \frac{\partial \phi_m}{\partial t}(t)\bigg\|_0^2 \mathrm{d}t
+\frac{4 r \tilde{C}_{inv}^2 \Delta t^2}{\rho \gamma}\int_{0}^{T} \bigg\| \frac{\partial P^h \phi_f}{\partial t}(t)\bigg\|_0^2 \mathrm{d}t\\
&+\frac{r^2 \gamma^2 C_T^2  \Delta t^2}{\rho^2 h} \int_{0}^{T} \bigg\|\frac{\partial P^h \boldsymbol{u}_f}{\partial t}(t)\bigg\|_1^2 \mathrm{d}t
+\frac{2 \sigma^2 k_m^2 \Delta s^2}{\eta_{f} C_{ft} \rho \mu^2}\int_{0}^{T}\bigg\|\frac{\partial P^h \phi_m}{\partial t}(t)\bigg\|_0^2 \mathrm{d}t\\
&+\frac{2 \sigma^2 k_m^2 \Delta s^2}{\eta_{m} C_{mt} \rho \mu^2} \int_{0}^{T}\bigg\| \frac{\partial P^h \phi_{f}}{\partial t}(t) \bigg\|_0^2 \mathrm{d}t
+ \frac{\tilde{C}_{inv}^2 r^2 \Delta t^2}{\rho^2 h} \int_{t_{n_k}}^{t_{n_{k+1}}} \bigg\| \frac{\partial P^h \phi_f}{ \partial t}(t) \bigg\|_0^2 \mathrm{d}t
+\frac{4 \gamma C_T^2 \Delta t^2}{\rho h}\int_{0}^{T}\bigg\| \frac{\partial P^h \boldsymbol{u}_c}{\partial t}(t)\bigg\|_1^2 \mathrm{d}t \\
&+ Ch^4 \Delta t
+ \Delta t \sum_{k=0}^{j+1}\sum_{n=n_k}^{n_{k+1}-1}(\frac{2 C_N^2 C_{PF}^2 C_B^2 }{\nu}\|e_c^{n+1}\|_0^2 + \frac{\tilde{C}_{inv}^2+2 C_{div}^2}{h}\|e_c^{n+1}\|_0^2
+ \frac{30 C_N^2 C_S^2 C_{PF}^2 C_H^2}{\nu} \|e_c^n\|_0^2)\\
&+ \Delta t \sum_{k=0}^{j+1} \sum_{n=n_k}^{n_{k+1}-1} (\frac{\eta_f C_{ft} }{\rho} \|\theta_f^{n_{k+1}}\|_0^2
+\frac{\eta_m C_{mt} }{\rho} \|\theta_m^{n_{k+1}}\|_0^2
+\frac{4 \tilde{C}_{inv}^2}{\rho \gamma}\|\theta_f^{n_k}\|_0^2)\\
&+ \|e_c^0\|_0^2 + \frac{\eta_f C_{ft}}{\rho}\|\theta_f^0\|_0^2
+ \frac{\eta_m C_{mt}}{\rho}\|\theta_m^0\|_0^2
+ \frac{\sigma k_m \Delta s}{\rho \mu}(\|\theta_f^0\|_0^2 + \|\theta_m^0\|_0^2)
+ \frac{\gamma \Delta s}{\rho h} \|e_f^{0} \cdot \boldsymbol{n}_d\|_{L^2(\mathbb{I})}^2.
\end{align*}
By discrete Gronwall inequality, when~$\kappa_n \Delta t \leq \frac{1}{2}$, where~$\kappa_n=\kappa_1+\kappa_2+\kappa_3, \kappa_1=\frac{2 C_N^2 C_{PF}^2 C_B^2 }{\nu}+\frac{\tilde{C}_{inv}+2 C_{div}^2}{h}+\frac{30 C_N^2 C_S^2 C_{PF}^2 C_H^2}{\nu}, \kappa_2=\frac{\eta_f C_{ft}}{\rho}+\frac{4 C_{inv}^2}{\rho \gamma},
\kappa_3=\frac{\eta_m C_{mt}}{\rho}$~and~$\Delta t =\mathcal{O} (h^2)$,
we conclude that
\begin{equation*}
\begin{split}
&\max_{1 \leq j \leq M-2}\bigg\{\|e_c^{n_{j+2}}\|_0^2 + \frac{\eta_f C_{ft}}{\rho} \|\theta_f^{n_{j+2}}\|_0^2
+ \frac{\eta_m C_{mt}}{\rho} \|\theta_m^{n_{j+2}}\|_0^2 \bigg\}
+\nu \Delta t \sum_{k=0}^{j+1} \sum_{n=n_k}^{n_{k+1}-1}\|\nabla e_c^{n+1}\|_0^2\\
&+ \frac{2 \mu \Delta s}{\rho k_f}\sum_{k=0}^{j+1}\|e_f^{n_{k+1}}\|_0^2
+ \frac{2 \mu \Delta s}{\rho k_m}\sum_{k=0}^{j+1}\|e_m^{n_{k+1}}\|_0^2
\leq C(h^4 + \Delta t^2 + \Delta t^2 h^{-1}).
\end{split}
\end{equation*}
Further more, by imbedding theorem~and~inverse inequality, $n=n_k+1,...,n_{k+1}, 0 \leq k \leq M-1$,
\begin{equation*}
\begin{split}
\max_{\substack{ 0 \leq k \leq M-1 \\ n \in \{n_{k}+1,...,n_{k+1}\}}} \|\boldsymbol{u}_c^n\|_{L^{\infty}} &\leq \max_{\substack{ 0 \leq k \leq M-1 \\ n \in \{n_{k}+1,...,n_{k+1}\}}}(\|e_c^n\|_{L^{\infty}}+\|P^h \boldsymbol{u}_c(t_n)\|_{L^{\infty}})\\
&\leq Ch^{-3/2}  \max_{\substack{ 0 \leq k \leq M-1 \\ n \in \{n_{k}+1,...,n_{k+1}\}}}\|e_c^n\|_0
+ C\max_{\substack{ 0 \leq k \leq M-1 \\ n \in \{n_{k}+1,...,n_{k+1}\}}} \|\boldsymbol{u}_c(t_n)\|_2\\
&\leq C_B,
\end{split}
\end{equation*}
and
\begin{equation*}
\begin{split}
\Delta t  \sum_{k=0}^{M-1}\sum_{n=n_k+1}^{n_{k+1}}\|\boldsymbol{u}_c^n\|_{W^{1,\infty}}^2 &\leq \Delta t \sum_{k=0}^{M-1}\sum_{n=n_k+1}^{n_{k+1}} (\|e_c^n\|_{W^{1,\infty}}^2 + \|P^h \boldsymbol{u}(t_n)\|_{W^{1,\infty}}^2)\\
&\leq C h^{-3} \Delta t \sum_{k=0}^{M-1}\sum_{n=n_k+1}^{n_{k+1}} \|\nabla e_c^n\|_0^2 + C \Delta t \sum_{k=0}^{M-1}\sum_{n=n_k+1}^{n_{k+1}} \|\boldsymbol{u}_c(t_n)\|_{W^{2,d^{\ast}}}^2\\
&\leq C_B,
\end{split}
\end{equation*}
where~$d^{\ast} > D$.

\end{proof}

From the triangle inequality and the approximation properties~\eqref{pro1}\eqref{pro2}\eqref{pro3}, we show the following corollary.
\begin{corollary}[Error convergence]\label{cor1}
Let assumptions of Theorem~$\ref{thm1}$~hold, we have the following error convergence
\begin{align}
\max_{0 \leq m \leq M}(\|\boldsymbol{u}_c(t_m)-\boldsymbol{u}_c^m\|_0^2 + \|\phi_d(t_m)-\phi_d^{m}\|_0^2)&\leq C(h^4 + \Delta t^2 + \Delta t^2 h^{-1}),
\end{align}
where
\begin{align*}
\|\phi_d(t_m)-\phi_d^{m}\|_0^2:=\|\phi_f(t_m)-\phi_f^{m}\|_0^2+\|\phi_m(t_m)-\phi_m^{m}\|_0^2.
\end{align*}
\end{corollary}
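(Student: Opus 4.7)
The plan is to reduce this corollary directly to Theorem~\ref{thm1} and the approximation properties \eqref{pro1}--\eqref{pro3} by a standard triangle inequality split around the projection $P^h$. First, for each $m$ with $0\leq m\leq M$ I would write
\begin{equation*}
\boldsymbol{u}_c(t_m)-\boldsymbol{u}_c^m = \bigl(\boldsymbol{u}_c(t_m)-P^h\boldsymbol{u}_c(t_m)\bigr) + e_c^m,
\end{equation*}
and similarly $\phi_f(t_m)-\phi_f^m = (\phi_f(t_m)-P^h\phi_f(t_m)) + \theta_f^m$ and $\phi_m(t_m)-\phi_m^m = (\phi_m(t_m)-P^h\phi_m(t_m)) + \theta_m^m$. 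Squaring and using $(a+b)^2 \leq 2a^2 + 2b^2$ then decouples the projection error from the discrete error.

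For the projection error, the estimates \eqref{pro1}--\eqref{pro3} provide
\begin{equation*}
\|\boldsymbol{u}_c(t_m)-P^h\boldsymbol{u}_c(t_m)\|_0^2 \leq C_{pc}^2 h^4,\quad
\|\phi_f(t_m)-P^h\phi_f(t_m)\|_0^2 \leq C_{pf}^2 h^4,\quad
\|\phi_m(t_m)-P^h\phi_m(t_m)\|_0^2 \leq C_{pm}^2 h^4,
\end{equation*}
all of which are absorbed into the desired bound $C(h^4+\Delta t^2+\Delta t^2 h^{-1})$.

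For the discrete part, I note that the indices $m$ appearing in the corollary range over the coarse time grid $t_m = t_{n_m}$ (the subscripts on $\phi_d$ live in $\Omega_d$), so the relevant quantities are $\|e_c^{n_{l+1}}\|_0^2$, $\|\theta_f^{n_{l+1}}\|_0^2$, and $\|\theta_m^{n_{l+1}}\|_0^2$ for $0\leq l\leq M-1$. Theorem~\ref{thm1}, specifically the bound \eqref{THMP}, supplies
\begin{equation*}
\max_{0\leq l\leq M-1}\Bigl(\|e_c^{n_{l+1}}\|_0^2 + \tfrac{\eta_f C_{ft}}{\rho}\|\theta_f^{n_{l+1}}\|_0^2 + \tfrac{\eta_m C_{mt}}{\rho}\|\theta_m^{n_{l+1}}\|_0^2\Bigr) \leq C(h^4+\Delta t^2+\Delta t^2 h^{-1}).
\end{equation*}
Dividing through by the positive constants $\eta_f C_{ft}/\rho$ and $\eta_m C_{mt}/\rho$ and combining with the projection estimates via the triangle inequality yields the stated bound on $\|\boldsymbol{u}_c(t_m)-\boldsymbol{u}_c^m\|_0^2 + \|\phi_d(t_m)-\phi_d^m\|_0^2$.

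Since Theorem~\ref{thm1} has already done all the heavy lifting (induction, boundedness of $\|\boldsymbol{u}_c^n\|_{L^\infty}$, handling of the interface terms and the characteristic tracing), the corollary itself presents no real obstacle: it is a two-line triangle inequality plus the approximation properties. The only care needed is to ensure the maximum over $m$ on the left-hand side is taken over the same coarse grid $\{t_{n_l}\}$ on which the theorem provides a maximum, and to note the $m=0$ case is trivial because $\boldsymbol{u}_c^0=P^h\boldsymbol{u}_{c0}$, $\phi_f^0=P^h\phi_{f0}$, $\phi_m^0=P^h\phi_{m0}$, leaving only the projection error at the initial time.
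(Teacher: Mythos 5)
Your proposal is correct and follows essentially the same route as the paper, which obtains the corollary precisely from the triangle inequality splitting around $P^h$, the approximation properties \eqref{pro1}--\eqref{pro3}, and the discrete error bound \eqref{THMP} of Theorem~\ref{thm1}. Your added remarks on the coarse-grid indexing and the trivial $m=0$ case are consistent with, and slightly more explicit than, the paper's one-line justification.
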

~

\section{Numerical examples}

In this section, some numerical examples are used to show the convergence performance and efficiency of decoupled modified characteristic finite element method with different subdomain time steps for the mixed stabilized formulation. The first example's results validate the optimal convergence order with different time steps. Furthermore, by changing the penalty parameter to search the impact for the convergence performance. Finally, we use 2-cores CPU to solve this problem. The second example is used to investigate the performance of practical issue by adjusting some different physical parameters. Such as the velocity of injection well, the matrix velocity on
the boundaries and the deep relationship between injection wellborn and horizontal open-hole completion wellborn. In this way, we can get some comparative experimental phenomena. For the finite element space constructed in Section 2, we use MINI elements (P1b-P1) for the Navier-Stokes equations and the Brezzi-Douglas-Marini (BMD1) for the microfracture flow velocity~$\boldsymbol{u}_f$~and matrix flow velocity~$\boldsymbol{u}_m$~in two different Darcy equations. The corresponding pressure~$\phi_f$~and~$\phi_m$~use piecewise constant elements P0. The FreeFEM++ package~\cite{Hecht2012New} is used to carry out experiments and all results are generated by the same computer.

\subsection{Example~1}
The solving area of model problem is~$\Omega=\Omega_c \cup \Omega_d$, where the conduct domain is~$\Omega_c = [0,1] \times [1,2]$~and the dual-porosity domain is~$\Omega_d = [0,1] \times [0,1]$. The non-slip interface is~$\mathbb{I} = (0,1) \times \{1\}$. The analytical solutions satisfying the  dual-porosity-Navier-Stokes model is as follows:
\begin{align*}
\boldsymbol{u}_c &=\bigg [  (x^2(y-1)^2+y)\cos(t),~ -\frac{2}{3}x(y-1)^3\cos(t)+(2-\pi \sin(\pi x)) \cos(t) \bigg ]^{T},\\
p_c &= (2-\pi \sin(\pi x) )  \sin(0.5 \pi y) \cos(t),\\
\phi_f &= (2-\pi \sin (\pi x) ) (1-y-\cos(\pi y)) \cos(t),\\
\phi_m &= (2-\pi \sin(\pi x)) (\cos(\pi (1-y))) \cos(t),\\
\boldsymbol{u}_f &= -\frac{k_f}{ \mu} \nabla \phi_f,\\
\boldsymbol{u}_m &= -\frac{k_m}{ \mu} \nabla \phi_m.
\end{align*}
In addition, the initial condition, boundary conditions and the forcing terms follow the analytical solutions.
For simplicity, take the parameters~$\nu, \mu, \sigma, \alpha, \rho, \eta_f, \eta_m, C_{ft}, C_{mt}, k_f, k_m$~are all~1.0~in numerical example, penalty parameter~$\gamma=0.1$, and the final time~$T=0.5$.

\subsubsection{Convergence performance and CPU comparation with different subdomain time steps}

We take varying space steps~$h=1/4, 1/8, 1/16, 1/32, 1/64$, different time step ratios~$r=1, 2, 4, 8$~and choose the corresponding time step~$\Delta t =h^2$. Calculate the errors between the exact solution and numerical solution of~$L^2$-norm and~$H^1$-seminorm for the velocity separately. And we compute the pressure and its~$L^2$-norm. In addition, the corresponding rate of convergence are obtained.

From Table 1-4, we see that $L^2$-norm convergence rate of~$\boldsymbol{u}_c, \boldsymbol{u}_f$~and~$\boldsymbol{u}_m$~is~2.0. The~$H^1$-seminorm convergence rate of~$\boldsymbol{u}_c$, the $L^2$-norm convergence rate of~$\phi_f$~and~$\phi_m$~is~1.0. In other words, the error in sense of~$L^2$-norm~obtains the optimal convergence rate~$\mathcal{O}(h^2)$, and the error in sense of semi-$H^1$~norm obtains the optimal rate~$\mathcal{O}(h)$.

\begin{table}[H]
\caption{The convergence performance for~$\Delta t=h^2$~at~$r=1$}
\centering
\begin{tabular}{ccccccc}
\hline
$h$ &  $\|\boldsymbol{u}_c -\boldsymbol{u}_c^h\|_0$ &
Rate &
 $\|\boldsymbol{u}_f -\boldsymbol{u}_f^h\|_0$&
Rate &
$\|\boldsymbol{u}_m -\boldsymbol{u}_m^h\|_0$&
Rate \\
\hline
$\frac{1}{4}$  & 0.129317    & --      & 0.536254   & --     & 0.482127    &--    \\
$\frac{1}{8}$  & 0.029965    & 2.11    & 0.115880   & 2.21   & 0.111867    &2.11  \\
$\frac{1}{16}$ & 0.007332    & 2.03    & 0.023838   & 2.28   & 0.024654    &2.18  \\
$\frac{1}{32}$ & 0.001793    & 2.03    & 0.005283   & 2.17   & 0.005993    &2.04  \\
$\frac{1}{64}$ & 0.000447    & 2.00    & 0.001336   & 1.98   & 0.001532    &1.97  \\
\hline
$h$ & $\|\nabla(\boldsymbol{u}_c -\boldsymbol{u}_c^h )\|_0$&
Rate &
$\|\phi_f-\phi_f^h\|_0$ &
Rate &
$\|\phi_m-\phi_m^h\|_0$ &
Rate \\
\hline
$\frac{1}{4}$  & 1.660150   & --       & 0.240974  & --       & 0.264592  &--     \\
$\frac{1}{8}$  & 0.703788   & 1.24     & 0.122430  & 0.98     & 0.135008  & 0.97  \\
$\frac{1}{16}$ & 0.333161   & 1.08     & 0.054168  & 1.18     & 0.061704  & 1.13  \\
$\frac{1}{32}$ & 0.158962   & 1.07     & 0.027427  & 0.98     & 0.031148  & 0.99  \\
$\frac{1}{64}$ & 0.078633   & 1.02     & 0.014070  & 0.96     & 0.015801  & 0.98  \\
\hline
\end{tabular}
\end{table}

\begin{table}[H]
\caption{The convergence performance for~$\Delta t=h^2$~at~$r=2$}
\centering
\begin{tabular}{ccccccc}
\hline
$h$ &  $\|\boldsymbol{u}_c -\boldsymbol{u}_c^h\|_0$ &
Rate &
 $\|\boldsymbol{u}_f -\boldsymbol{u}_f^h\|_0$&
Rate &
$\|\boldsymbol{u}_m -\boldsymbol{u}_m^h\|_0$&
Rate \\
\hline
$\frac{1}{4}$  & 0.129303    & --      & 0.530867   & --     & 0.480194    &--    \\
$\frac{1}{8}$  & 0.029945    & 2.11    & 0.115879   & 2.20   & 0.111907    &2.10  \\
$\frac{1}{16}$ & 0.007323    & 2.03    & 0.023828   & 2.28   & 0.024687    &2.18  \\
$\frac{1}{32}$ & 0.001790    & 2.03    & 0.005282   & 2.17   & 0.006003    &2.04  \\
$\frac{1}{64}$ & 0.000446    & 2.01    & 0.001336   & 1.98   & 0.001535    &1.97  \\
\hline
$h$ & $\|\nabla(\boldsymbol{u}_c -\boldsymbol{u}_c^h )\|_0$&
Rate &
$\|\phi_f-\phi_f^h\|_0$ &
Rate &
$\|\phi_m-\phi_m^h\|_0$ &
Rate \\
\hline
$\frac{1}{4}$  & 1.660140   & --       & 0.235392  & --       & 0.263993  &--     \\
$\frac{1}{8}$  & 0.703787   & 1.24     & 0.122329  & 0.94     & 0.134997  & 0.97  \\
$\frac{1}{16}$ & 0.333162   & 1.08     & 0.054165  & 1.18     & 0.061704  & 1.13  \\
$\frac{1}{32}$ & 0.158962   & 1.07     & 0.027427  & 0.98     & 0.031148  & 0.99  \\
$\frac{1}{64}$ & 0.078633   & 1.02     & 0.014070  & 0.96     & 0.015801  & 0.98  \\
\hline
\end{tabular}
\end{table}

\begin{table}[H]
\caption{The convergence performance for~$\Delta t=h^2$~at~$r=4$}
\centering
\begin{tabular}{ccccccc}
\hline
$h$ &  $\|\boldsymbol{u}_c -\boldsymbol{u}_c^h\|_0$ &
Rate &
 $\|\boldsymbol{u}_f -\boldsymbol{u}_f^h\|_0$&
Rate &
$\|\boldsymbol{u}_m -\boldsymbol{u}_m^h\|_0$&
Rate \\
\hline
$\frac{1}{4}$  & 0.129279    & --      & 0.522902   & --     & 0.475665    &--    \\
$\frac{1}{8}$  & 0.029894    & 2.11    & 0.115875   & 2.17   & 0.111991    &2.09  \\
$\frac{1}{16}$ & 0.007305    & 2.03    & 0.023810   & 2.28   & 0.024757    &2.18  \\
$\frac{1}{32}$ & 0.001783    & 2.03    & 0.005282   & 2.17   & 0.006024    &2.04  \\
$\frac{1}{64}$ & 0.000443    & 2.01    & 0.001337   & 1.98   & 0.001540    &1.97  \\
\hline
$h$ & $\|\nabla(\boldsymbol{u}_c -\boldsymbol{u}_c^h )\|_0$&
Rate &
$\|\phi_f-\phi_f^h\|_0$ &
Rate &
$\|\phi_m-\phi_m^h\|_0$ &
Rate \\
\hline
$\frac{1}{4}$  & 1.660130   & --       & 0.229101  & --       & 0.263075  &--     \\
$\frac{1}{8}$  & 0.703790   & 1.24     & 0.122142  & 0.91     & 0.134975  & 0.96  \\
$\frac{1}{16}$ & 0.333165   & 1.08     & 0.054160  & 1.17     & 0.061704  & 1.13  \\
$\frac{1}{32}$ & 0.158965   & 1.07     & 0.027427  & 0.98     & 0.031148  & 0.99  \\
$\frac{1}{64}$ & 0.078634   & 1.02     & 0.014070  & 0.96     & 0.015801  & 0.98  \\
\hline
\end{tabular}
\end{table}

\begin{table}[H]
\caption{The convergence performance for~$\Delta t=h^2$~at~$r=8$}
\centering
\begin{tabular}{ccccccc}
\hline
$h$ &  $\|\boldsymbol{u}_c -\boldsymbol{u}_c^h\|_0$ &
Rate &
 $\|\boldsymbol{u}_f -\boldsymbol{u}_f^h\|_0$&
Rate &
$\|\boldsymbol{u}_m -\boldsymbol{u}_m^h\|_0$&
Rate \\
\hline
$\frac{1}{4}$  & 0.129111    & --      & 0.517255   & --     & 0.463726    &--    \\
$\frac{1}{8}$  & 0.029782    & 2.12    & 0.115885   & 2.16   & 0.112212    &2.05  \\
$\frac{1}{16}$ & 0.007267    & 2.03    & 0.023784   & 2.28   & 0.024910    &2.17  \\
$\frac{1}{32}$ & 0.001774    & 2.03    & 0.005292   & 2.17   & 0.006069    &2.04  \\
$\frac{1}{64}$ & 0.000444    & 2.00    & 0.001349   & 1.97   & 0.001551    &1.97  \\
\hline
$h$ & $\|\nabla(\boldsymbol{u}_c -\boldsymbol{u}_c^h )\|_0$&
Rate &
$\|\phi_f-\phi_f^h\|_0$ &
Rate &
$\|\phi_m-\phi_m^h\|_0$ &
Rate \\
\hline
$\frac{1}{4}$  & 1.660340   & --       & 0.224156  & --       & 0.261685  &--     \\
$\frac{1}{8}$  & 0.703837   & 1.24     & 0.121832  & 0.88     & 0.134936  & 0.96  \\
$\frac{1}{16}$ & 0.333189   & 1.08     & 0.054151  & 1.17     & 0.061704  & 1.13  \\
$\frac{1}{32}$ & 0.158976   & 1.07     & 0.027427  & 0.98     & 0.031148  & 0.99  \\
$\frac{1}{64}$ & 0.078640   & 1.02     & 0.014070  & 0.96     & 0.015801  & 0.98  \\
\hline
\end{tabular}
\end{table}

Also, we record the corresponding time cost in different time steps.
\begin{table}[H]
\caption{The CPU cost performance with different time step}
\centering
\begin{tabular}{|c|ccccc|}
\hline
\diagbox{$r$}{CPU Time(s)}{$h$}& \Large$\frac{1}{4}$ & \Large$\frac{1}{8}$ & \Large$\frac{1}{16}$ & \Large$\frac{1}{32}$ & \Large$\frac{1}{64}$\\
\hline
1 & 0.14 & 1.29 & 13.05 & 190.89 & 3284.09\\
\hline
2 & 0.06 & 0.60 & 8.43 & 134.89 & 2356.03\\
\hline
4 & 0.05 & 0.49 & 6.76 & 107.78 & 1885.20\\
\hline
8 & 0.04 & 0.44 & 5.89 & 94.31 & 1649.38\\
\hline
\end{tabular}
\end{table}
When taking different~$r$, we obtain similar errors. From Table~5, as time step ratio~$r$~grows, the CPU time decreases. When~$h$~is smaller, the CPU time is so much less. Especially, $r=2$~is obvious.

Next, fixing the time step~$\Delta t=0.001$, we give the relative errors and the CPU time at~$r=1$~as follows.
\begin{table}[H]
\caption{The relative errors and time cost of the characteristic FEMs}
\centering
\begin{tabular}{cccccccc}
\hline
$h$ &  $\frac{\|\boldsymbol{u}_c-\boldsymbol{u}_c^h\|_0}{\|\boldsymbol{u}_c\|_0}$ &
$\frac{\|\nabla(\boldsymbol{u}_c-\boldsymbol{u}_c^h)\|_0}{\|\nabla \boldsymbol{u}_c\|_0}$ &
$\frac{\|\boldsymbol{u}_f-\boldsymbol{u}_f^h\|_0}{\|\boldsymbol{u}_f\|_0}$ &
$\frac{\|\phi_f - \phi_f^h \|_0}{\|\phi_f\|_0} $ &
$\frac{\|\boldsymbol{u}_m - \boldsymbol{u}_m^h \|_0}{\|\boldsymbol{u}_m\|_0}$ &
$\frac{\|\phi_m - \phi_m^h \|_0}{\|\phi_m\|_0} $ &
$\text{CPU(s)}$\\
\hline
$\frac{1}{4}$  & 0.075926  & 0.264307   & 0.129379    & 0.448490  &  0.102381  & 0.442385   &  8.50\\
$\frac{1}{8}$  & 0.017523  & 0.112033   & 0.027754    & 0.220726  &  0.023679  & 0.225042   &  17.96\\
$\frac{1}{16}$ & 0.004300  & 0.053037   & 0.005709    & 0.097581  &  0.005215  & 0.102845   &  47.70\\
$\frac{1}{32}$ & 0.001060  & 0.025306   & 0.001265    & 0.049407  &  0.001269  & 0.051915   &  184.04\\
$\frac{1}{64}$ & 0.000275  & 0.012519   & 0.000319    & 0.025345  &  0.000326  & 0.026335   &  835.67\\
\hline
\end{tabular}
\end{table}

At the same time, we apply the Newton iteration method to solve the Navier-Stokes equations as for a contrast of using modified characteristic finite element method. Also, we solve the problem in the same mesh.

\begin{table}[H]
\caption{The relative errors and time cost of the Newton iteration method}
\centering
\begin{tabular}{cccccccc}
\hline
$h$ &  $\frac{\|\boldsymbol{u}_c-\boldsymbol{u}_c^h\|_0}{\|\boldsymbol{u}_c\|_0}$ &
$\frac{\|\nabla(\boldsymbol{u}_c-\boldsymbol{u}_c^h)\|_0}{\|\nabla \boldsymbol{u}_c\|_0}$ &
$\frac{\|\boldsymbol{u}_f-\boldsymbol{u}_f^h\|_0}{\|\boldsymbol{u}_f\|_0}$ &
$\frac{\|\phi_f - \phi_f^h \|_0}{\|\phi_f\|_0} $ &
$\frac{\|\boldsymbol{u}_m - \boldsymbol{u}_m^h \|_0}{\|\boldsymbol{u}_m\|_0}$ &
$\frac{\|\phi_m - \phi_m^h \|_0}{\|\phi_m\|_0} $ &
$\text{CPU(s)}$\\
\hline
$\frac{1}{4}$  & 0.075927   &  0.264311    & 0.129379   & 0.448490  &  0.102381 &   0.442385  &  20.72\\
$\frac{1}{8}$  & 0.017515   &  0.112035    & 0.027753   & 0.220726  &  0.023679 &   0.225042  &  32.47\\
$\frac{1}{16}$ & 0.004293   &  0.053038    & 0.005710   & 0.097581  &  0.005215 &   0.102845  &  110.48\\
$\frac{1}{32}$ & 0.001051   &  0.025306    & 0.001266   & 0.049407  &  0.001269 &   0.051915  &  433.19\\
$\frac{1}{64}$ & 0.000260   &  0.012518    & 0.000319   & 0.025345  &  0.000326 &   0.026335  &  1442.66\\
\hline
\end{tabular}
\end{table}

Obviously, the two method obtain similar accuracy. Note that the time to solve is reduced greatly with modified characteristic finite element method. Therefore, when we increase the mesh size, the modified characteristic finite element method performances efficiently  in same accuracy. Also, when take different~$r$, we can get the same conclusion.

\subsubsection{The stability performance with different penalty parameters}
Next, we use the method to test the convergence performance in different penalty parameter~$\gamma=0, 0.0001, 0.001, 0.01, 1$. Show the log-log plot of the errors as follows.
\begin{figure}[H]
\begin{centering}
\begin{subfigure}[t]{0.3\textwidth}
\centering
\includegraphics[width=1.05\textwidth]{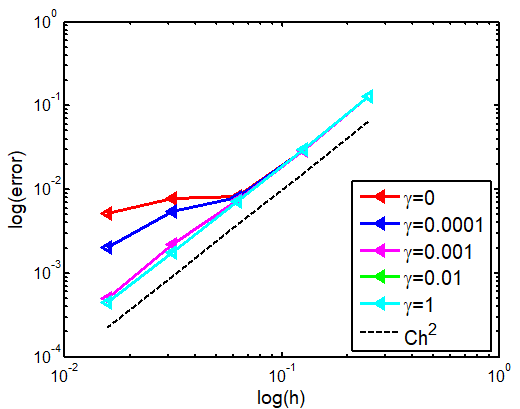}
\subcaption*{~~~~\footnotesize$\|\boldsymbol{u}_c-\boldsymbol{u}_c^h\|_0$}
\end{subfigure}
\quad
\begin{subfigure}[t]{0.3\textwidth}
\centering
\includegraphics[width=1.05\textwidth]{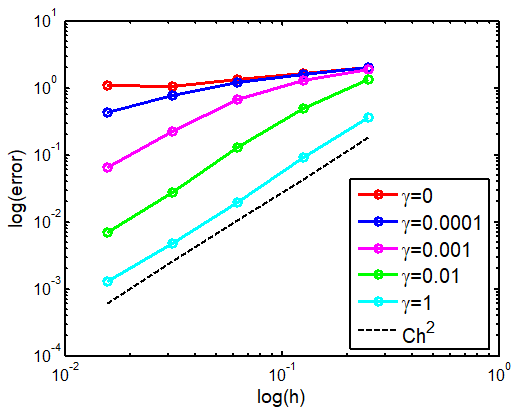}
\subcaption*{~~~~~~$\footnotesize\|\boldsymbol{u}_f-\boldsymbol{u}_f^h\|_0$}
\end{subfigure}
\quad
\begin{subfigure}[t]{0.3\textwidth}
\centering
\includegraphics[width=1.05\textwidth]{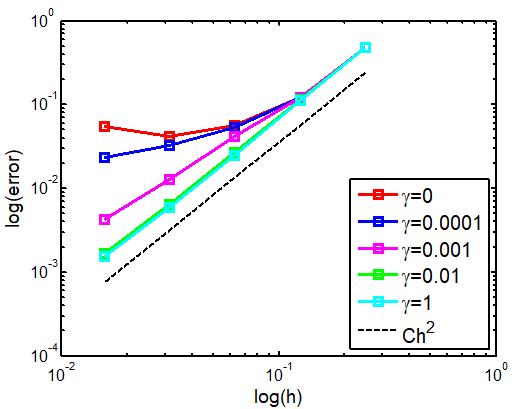}
\subcaption*{~~~~~~$\footnotesize\|\boldsymbol{u}_m-\boldsymbol{u}_m^h\|_0$}
\end{subfigure}
\end{centering}
\caption{\label{Fig1}The effect of the different values of the penalty parameter on the order of convergence for velocity.}
\end{figure}

\begin{figure}[H]
\begin{centering}
\begin{subfigure}[t]{0.3\textwidth}
\centering
\includegraphics[width=1.05\textwidth]{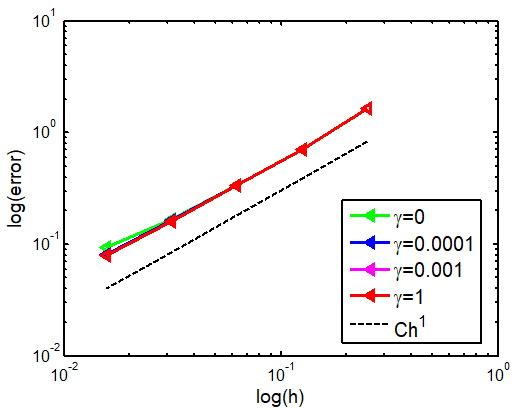}
\subcaption*{~~~~\footnotesize$\normalsize \|\nabla (\boldsymbol{u}_c-\boldsymbol{u}_c^h)\|_0$}
\end{subfigure}
\quad
\begin{subfigure}[t]{0.3\textwidth}
\centering
\includegraphics[width=1.05\textwidth]{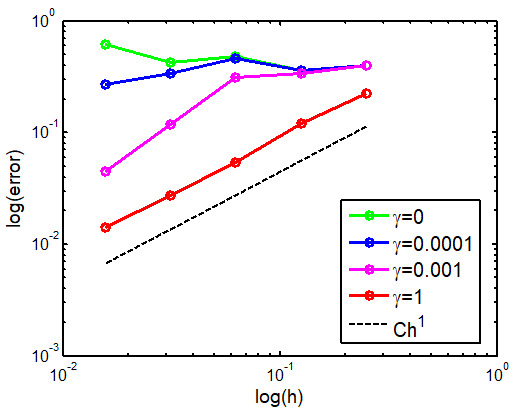}
\subcaption*{~~~~$\footnotesize\|\phi_f-\phi_f^h\|_0$}
\end{subfigure}
\quad
\begin{subfigure}[t]{0.3\textwidth}
\centering
\includegraphics[width=1.05\textwidth]{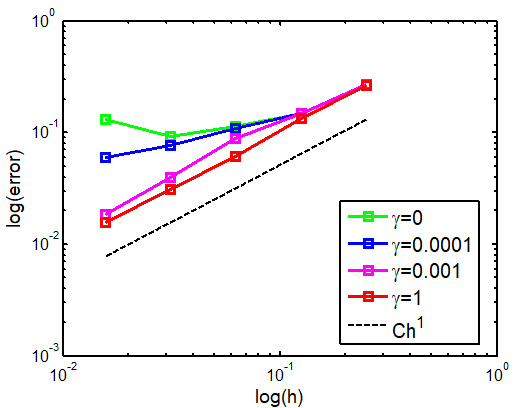}
\subcaption*{~~~~~~$\footnotesize\|\phi_m-\phi_m^h\|_0$}
\end{subfigure}
\end{centering}
\caption{\label{Fig2}The effect of the different values of the penalty parameter on the order of convergence for pressure.}
\end{figure}

\subsubsection{Parallel computing performance}
In order to improve the computational efficiency, we are thinking about the parallel algorithm in different domains based on modified characteristic finite element method.
\begin{table}[H]
\caption{The relative errors and time cost of the characteristic FEMs by MPI=2}
\centering
\begin{tabular}{cccccccc}
\hline
$h$ &  $\frac{\|\boldsymbol{u}_c-\boldsymbol{u}_c^h\|_0}{\|\boldsymbol{u}_c\|_0}$ &
$\frac{\|\nabla(\boldsymbol{u}_c-\boldsymbol{u}_c^h)\|_0}{\|\nabla \boldsymbol{u}_c\|_0}$ &
$\frac{\|\boldsymbol{u}_f-\boldsymbol{u}_f^h\|_0}{\|\boldsymbol{u}_f\|_0}$ &
$\frac{\|\phi_f - \phi_f^h \|_0}{\|\phi_f\|_0} $ &
$\frac{\|\boldsymbol{u}_m - \boldsymbol{u}_m^h \|_0}{\|\boldsymbol{u}_m\|_0}$ &
$\frac{\|\phi_m - \phi_m^h \|_0}{\|\phi_m\|_0} $ &
$\text{CPU(s)}$\\
\hline
$\frac{1}{4}$  & 0.075926  & 0.264307   & 0.129379    & 0.448490  &  0.102381  & 0.442385   &  3.07\\
$\frac{1}{8}$  & 0.017523  & 0.112033   & 0.027754    & 0.220726  &  0.023679  & 0.225042   &  9.69\\
$\frac{1}{16}$ & 0.004300  & 0.053037   & 0.005709    & 0.097581  &  0.005215  & 0.102845   &  36.67\\
$\frac{1}{32}$ & 0.001060  & 0.025306   & 0.001265    & 0.049407  &  0.001269  & 0.051915   &  143.76\\
$\frac{1}{64}$ & 0.000275  & 0.012519   & 0.000319    & 0.025345  &  0.000326  & 0.026335   &  644.51\\
\hline
\end{tabular}
\end{table}

Under the same precision, the CPU time are shorten. The computational efficiency get promoted.

\subsection{Example~2}
In this numerical example, we give the numerical simulation for the horizontal open-hole completion wellborn with a vertical production wellborn and a vertical injection wellborn. In petroleum engineering, in order to improve oil productivity, the injection wellborn is one of the most skills. In general, we will take some measures such as putting some salty water, the gas of CO2 and superheated steam and so on.
In the following, we adjust the pressure of wellborn to investigate the change of velocity and pressure in wellborn.

\begin{figure}[htbp]
  \centering
  \includegraphics[width=6.7cm]{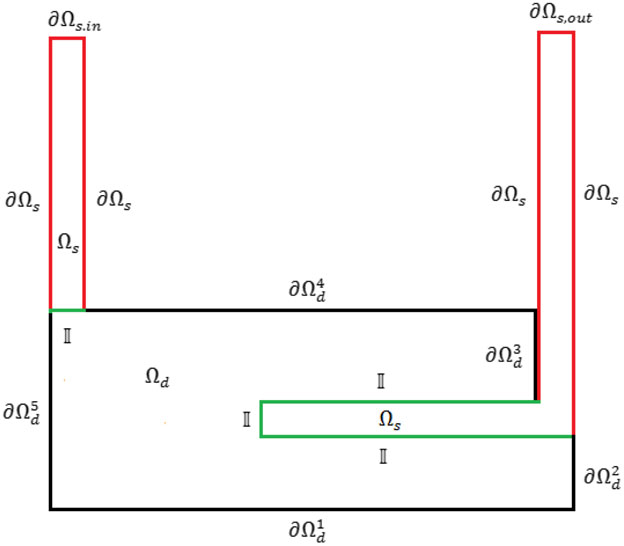}\\
  \caption{A sketch of the conduit region~$\Omega_c$, the dual-porosity region~$\Omega_d$~and the interface~$\mathbb{I}$.}
\end{figure}
The geometrical shape of numerical simulation as is shown in above picture. The interface between dual-porosity domain and conduit domain is~$\mathbb{I}=\{(x,y): y=3, 0 \leq x \leq 0.25 \}  \cup \{(x,y): y=1.38, 2\leq x \leq 6\} \cup \{(x,y): x=2, 1.38 \leq y \leq 1.63\} \cup \{(x,y): y=1.63, 2 \leq x \leq 5.75\}$. 

The dual-porosity area~$\Omega_d$~is made up microfracture area and matrix area. The boundary is~$\partial \Omega_d^1 =\{(x,y): y=0, 0 \leq x \leq 6\}, \partial \Omega_d^2=\{(x,y): x=6, 0 \leq y \leq 1.38\}, \partial \Omega_d^3 =\{(x,y): x=5.75, 1.63 \leq y \leq 3\}, \partial \Omega_d^4=\{(x,y): y=3, 0.25 \leq x \leq 5.75\}, \partial \Omega_d^5 =\{(x,y): x=0, 0 \leq y \leq 3\}$. In~$\partial \Omega_d^1, \partial \Omega_d^2, \partial \Omega_d^3, \partial \Omega_d^4, \partial \Omega_d^5$, the velocity in microfracture area is~$\boldsymbol{u}_f$, and the velocity in matrix area is~$\boldsymbol{u}_m$. Here, The velocity in the two domains are supposed in~$(0,0.5), (-0.5,0), (-0.5,0), (0,-0.5), (0.5,0)$~and~$(0,0.001), (-0.001,0), (-0.001,0),(0,-0.001),$\\
$(0.001,0)$ separately.  Furthermore, in contrast,  we will increase the velocity value~$(0,\Theta), (-\Theta,0),$\\
$(-\Theta,0), (0,-\Theta), (\Theta,0)$~on the boundary of matrix~$\boldsymbol{u}_m$, where the~$\Theta=0.05, 0.1$. At the same time, other parameters are not changed.

The domain of free flow is made up with a vertical injection and production wellborn with horizontal open-hole completion. The inflow boundary condition is applied on the top of the left vertical well~$\partial \Omega_{c,in}=\{(x,y): y=7, 0 \leq x \leq 0.25\}$, $U_{c1}=0, U_{c2}=-64x(0.25-x)$. Accordingly, on the top of the right vertical wellborn~$\partial \Omega_{c,out}=\{(x,y): y=7, 5.75 \leq x \leq 6\}$, we
apply the Neumann boundary condition~$(-p_c \mathrm{I} + \nu \nabla \boldsymbol{u}_c) \cdot \boldsymbol{n}_c = 0$. The height of injection wellborn and production wellborn is~$\partial \Omega_c = \{(x,y): x=0, 3 \leq y \leq 7\} \cup \{(x,y): x=0.25, 3 \leq y \leq 7\} \cup \{(x,y): x=6, 1.38 \leq y \leq 7\} \cup \{(x,y): x=5.75, 1.63 \leq y \leq 7\}$. In here, we impose the non-slip boundary condition~$\boldsymbol{u}_c=(0,0)$.

The model parameters are chosen as follows,  $\eta_f=10^{-4}, \eta_m=10^{-2}, C_{ft}=10^{-5}, C_{mt}=10^{-5}, k_f=10^{-4}, k_m=10^{-8}, \mu=10^{-2}, \nu=10^{-2}, \sigma=0.9, \alpha=1.0, \rho=1.0, \boldsymbol{f}_c=0, \boldsymbol{f}_d=0$, and~$\gamma=10$.  The mesh size and time step are taken~$h=1/32, \Delta t =0.01$. And the finial time~$T=5.0$.

\begin{figure}[H]
\begin{centering}
\begin{subfigure}[t]{0.3\textwidth}
\centering
\includegraphics[width=1.5\textwidth]{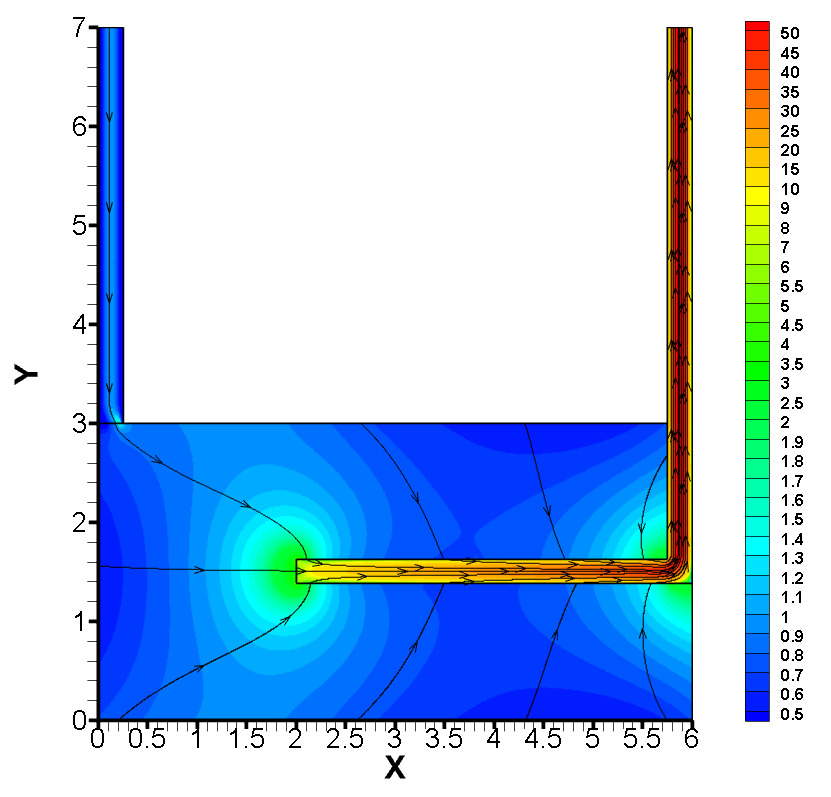}
\end{subfigure}
\hspace{30mm}
\begin{subfigure}[t]{0.3\textwidth}
\centering
\includegraphics[width=1.5\textwidth]{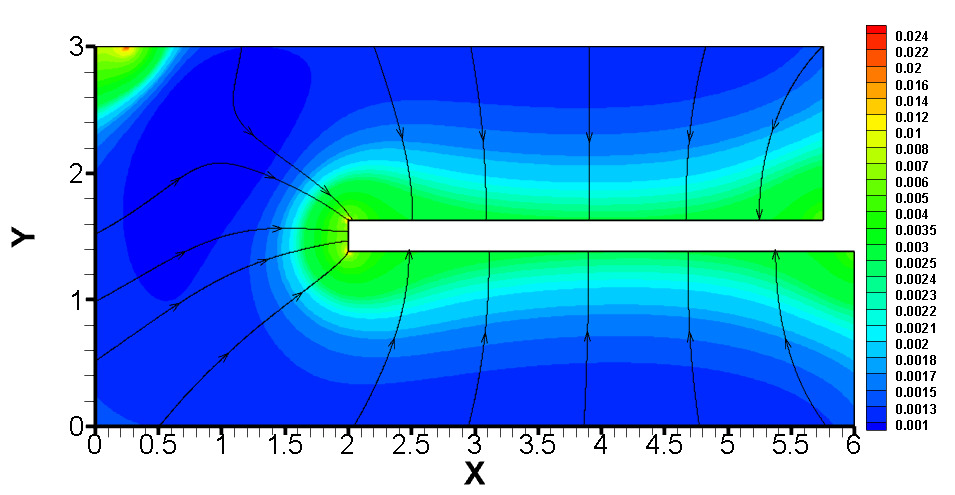}
\end{subfigure}
\end{centering}
\caption{\label{Fig3}The flow speed and streamlines around a imjection well and horizontal open-hole attached with vertical production wellbore completion. Left: the flow in the microfractures and conduits; Right: the flow in the matrix.}
\end{figure}

\begin{figure}[H]
\begin{centering}
\begin{subfigure}[t]{0.3\textwidth}
\centering
\includegraphics[width=1.5\textwidth]{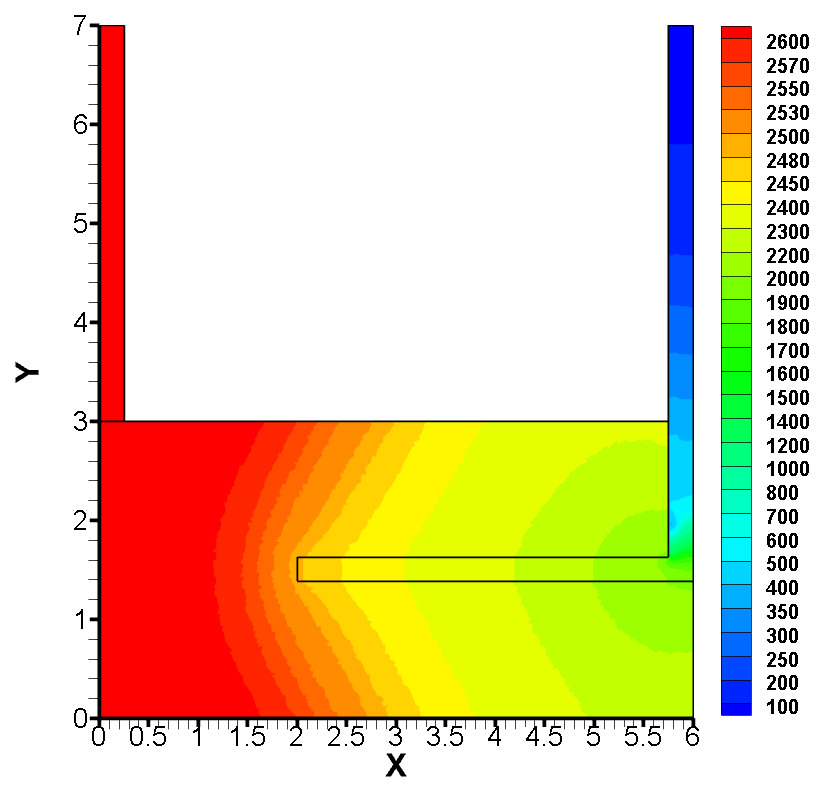}
\end{subfigure}
\hspace{30mm}
\begin{subfigure}[t]{0.3\textwidth}
\centering
\includegraphics[width=1.5\textwidth]{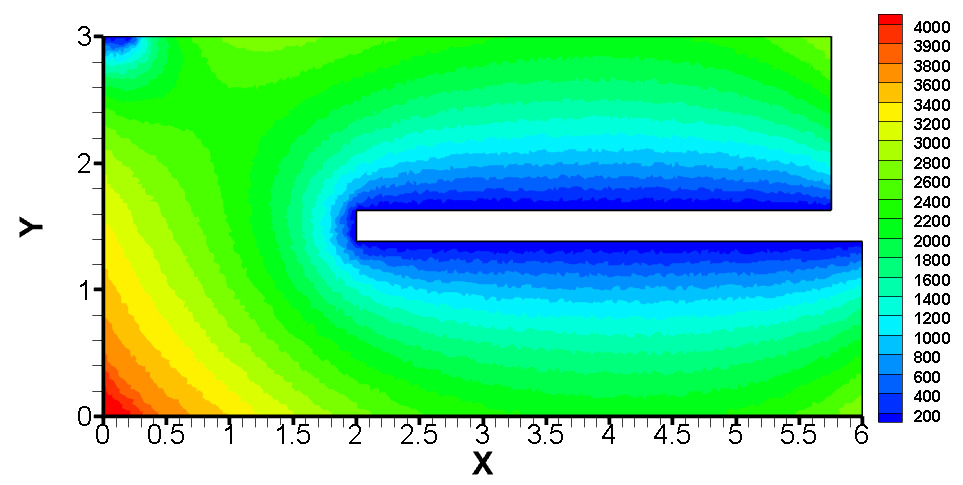}
\end{subfigure}
\end{centering}
\caption{\label{Fig4}The pressure around a injection well and horizontal open-hole attached with vertical production wellborn completion. Left: the pressure in the microfractures and conduits; Right: the pressure in the matrix.}
\end{figure}

\section{Conclusions}

In this paper, we develop and analyze the decoupled modified characteristic finite element method with different subdomain time steps for the mixed stabilized formulation of nonstationary dual-porosity-Navier-Stokes model.
Its main feature is a combination of characteristic methods and finite element methods, which leads to a decoupled and
fully discrete scheme without nonlinear terms. Under certain assumptions the $L^\infty$-norm of
the fully discrete velocity solution is uniformly bounded, and then we prove the
error convergence of the velocity and the corresponding pressures in sense of the $L^2$-norm and the $H^1$-seminorm. Further the numerical tests show the validity of the methods we develop. In order to improve our methods, several open problems remain to be solved, e.g., how to relax assumptions but still keep the uniform~$L^\infty$~boundedness of~$\boldsymbol{u}_c^n$; whether there exist other solutions of dealing with the interface terms in the fully discrete scheme for better convergence results with $H^1$-seminorm; the modified characteristic FEM mixed with other numerical methods such as stabilization methods and high-order time discretization methods are worthy of research items.

\section*{Acknowledgements}
This work is partially supported by the Major Research and Development Program of China under grant No. 2016YFB0200901 and the NSF of China under grant No. 11771348 and No. 11771259. Also grant for the Special support program to develop innovative talents in the region of Shaanxi province and the youth innovation team on computationally efficient numerical methods based on new energy problems in Shannxi province.

\newpage
\section*{References}
\bibliographystyle{plain}
\bibliography{bibfile}
\end{document}